\newtheorem{theorem}{Theorem}[section]
\newtheorem{corollary}[theorem]{Corollary}
\newtheorem{prop}[theorem]{Proposition}
\theoremstyle{definition}
\theoremstyle{remark}
\newtheorem{remark}{Remark}[section]
\newtheorem*{acknow}{Acknowledgments}
\numberwithin{equation}{section}
\newcommand{\abs}[1]{\left\vert#1\right\vert}
\def\be{\begin{equation}}
\def\ee{\end{equation}}
\def\ba{\begin{eqnarray*}}
\def\ea{\end{eqnarray*}}
\def\bae{\begin{eqnarray}}
\def\eae{\end{eqnarray}}
\def\bc{\begin{center}}
\def\ec{\end{center}}
\begin{document}

\title[Products of complex Ginibre matrices with a source]{Singular values for products of complex Ginibre matrices with a source: hard edge limit and phase transition}


\author{Peter J. Forrester} \address{Department of Mathematics and Statistics, The University of Melbourne, Victoria 3010, Australia;
 ARC Centre of Excellence for Mathematical \& Statistical Frontiers}\email{p.forrester@ms.unimelb.edu.au}
\author{Dang-Zheng Liu} \address{Key Laboratory of Wu Wen-Tsun Mathematics, Chinese Academy of Sciences, School of Mathematical Sciences, University of Science and Technology of China, Hefei 230026, P.R.~China}
\email{dzliu@ustc.edu.cn}

\date{\today}

 \keywords{Product of random matrices, Meijer G-function, Hard edge limit, Phase transition}

\begin{abstract} The singular values squared of the random matrix product
$Y = G_r G_{r-1} \cdots G_1 (G_0 + A)$,
where each $G_j$ is a rectangular  standard complex Gaussian matrix  while $A$ is
non-random, are shown to be a determinantal point process with correlation kernel given by a double contour integral. When all but finitely many eigenvalues of $A^*A$ are   equal to $bN$, the
kernel is shown to admit a well-defined hard edge scaling, in which  case a critical value is established  and
a phase transition phenomenon is observed. More specifically, the limiting   kernel in the subcritical regime of $0<b<1$ is independent of $b$, and is in fact the same
as that known for the case $b=0$ due to Kuijlaars   and Zhang. The critical regime of $b=1$   allows for a double scaling limit  by choosing $b = (1 -  \tau/\sqrt{N})^{-1}$, and for this the critical kernel and  outlier phenomenon  are established.
  In the simplest case $r=0$, which is closely related to
 non-intersecting squared Bessel paths, a distribution corresponding to the finite shifted mean  LUE   is proven to be the scaling limit in the supercritical regime of $b>1$ with two distinct scaling rates. Similar results also hold true for the random matrix  product  $T_r T_{r-1} \cdots T_1 (G_0 + A)$, with each $T_j$ being a truncated unitary matrix.
\end{abstract}

\maketitle
\section{Introduction and main results} \label{sectionintroduction}

\subsection{Introduction}
The squared singular values of a matrix $X$ are equal to the eigenvalues of the positive semi-definite Hermitian matrix $X^* X$, where $X^*$ denotes the
Hermitian conjugate of $X$. An ensemble of random matrices of the form $X^* X$ may then contain $x=0$ as the left boundary of support of the eigenvalues.
Since the eigenvalue density is strictly zero for $x<0$, $x=0$ is then called a hard edge (see e.g.~\cite[Ch.~7]{Fo10}).
As an explicit example, consider the ensemble of $n \times N$ ($n \ge N$) rectangular standard complex Gaussian random matrices, namely the joint density of elements being proportional to $\exp\{-\textrm{tr}(X^* X)\}$,  and let $X$ be a matrix
from this ensemble. Let
$\{\lambda_j\}$ denote the eigenvalues of the scaled positive semi-definite matrix $N^{-1}X^* X$. In the limit $N \to \infty$ with $n - N$ fixed, the density of
$\{\lambda_j\}$ has support $[0,4]$. That the support is a finite interval gives rise to this particular scaling being referred to as global scaling, and the corresponding
density as the global density.
The explicit functional form of the global density is given by the so-called Marchenko-Pastur law (see e.g.~\cite{PS11})
\begin{equation}\label{2.1}
\rho_{(1)}^{\rm MP}(\lambda) = {1 \over 2 \pi} \sqrt{  4 - \lambda \over\lambda}, \qquad 0 < \lambda \le 4.
\end{equation}
Note in particular the reciprocal square root singularity as the hard edge $\lambda = 0$ is approached from above, in contrast to the square root
singularity as $\lambda \to 4^-$. The point $\lambda = 4$ is an example of what is termed a soft edge, since for finite $N$ the eigenvalue density is
not strictly zero for $\lambda > 4$.

Continuing with this example, for large $N$  the eigenvalues in the neighbourhood of the hard edge have spacing $\mathcal{O}(1)$ upon the
introduction of the scaled variables $X_j = 4 N^{2} \lambda_j$ $(j=1,\dots,N)$ (see e.g.~\cite[\S 7.2.1]{Fo10}).
This will be referred to as hard edge scaling. Moreover, in the limit $N \to \infty$,
and with $\nu_0 = n - N$, the limiting state  --- referred to as the hard edge state --- is an example of a determinantal point process, meaning that the
$k$-point correlation function can be written in the form
\begin{equation}\label{3.1}
\rho_{(k)}(X_1,\dots,X_k) = \det [ K^{\rm h}(X_j, X_l) ]_{j,l=1,\dots,k}
\end{equation}
with correlation kernel (see e.g.~\cite[Exercises 7.2 q.1]{Fo10})
\begin{equation}\label{3.2}
 K^{\rm h}(x,y)  = {1 \over 4}  \int_0^1 J_{\nu_0}(\sqrt{xt}) J_{\nu_0}(\sqrt{yt}) \, dt,
 \end{equation}
where $J_{\nu_0}(x) $ is the  Bessel function of the first kind of order $\nu_0$.

 Our interest in this paper is in the functional form and analytic properties of the correlation kernel for the hard edge scaling of the squared singular values of the product
 of independent random matrices
 \begin{equation}\label{Y}
 Y = G_r G_{r-1} \cdots G_1 (G_0 + A),
 \end{equation}
where each $G_j$ is an $(N+\nu_j)\times (N+\nu_{j-1})$    standard complex Gaussian matrix (also referred to as the complex Ginibre matrices since such  non-Hermitian
random matrices in the  square  case were  first studied by Ginibre \cite{ginibre56}) with $\nu_{-1}=0$ and integers $\nu_0, \ldots, \nu_r\geq 0$, while $A$ is of size $(N + \nu_0) \times N$ and
fixed.  If we focus on the singular values of $Y$, the definition (\ref{Y}) can equivalently be written as a product of independent square matrices $G_j$ now with each
being distributed according to the joint density of elements proportional to $\det^{\nu_j}(G_j^* G_j)\exp\{-\textrm{tr}(G_j^* G_j)\}$; see e.g. \cite{AIK13,KS14}. In this case the restriction on the parameters can be relaxed to $\nu_0, \nu_1, \ldots \nu_r>-1$, and the main results  in the present paper (for instance, Proposition \ref{pdf}, Theorems \ref{criticalkernel}, \ref{deformedcriticalkernel},  \ref{noncriticalkernel} and \ref{supercriticalkernel}) also hold true.

In the case that all entries of $A$ are zero, the determinantal representation of the joint eigenvalue density and  the limiting hard edge state have been the subject of a number of recent works
\cite{AIK13,AKW13,Fo14,KS14,KZ,st2014}.
For $r$ nonzero, the product (\ref{Y}) is the simplest nontrivial example of the
more general product $(G_r+A_{r}) \cdots (G_1+A_1) (G_0 + A_0)$, where each $G_j$ is random and each $A_j$ is fixed.
However
for the latter  product with $r\geq 1$, it is not known how to find a closed form of  the joint eigenvalue density,
which is the starting point of our study (see Proposition \ref{pdf} below), let alone to study asymptotic  statistical  properties.

The study of products of random matrices goes back to the pioneering work of Furstenberg and Kesten \cite{FK60} in the context of dynamical systems and their Lyapunov exponents.  Later, applications were found in Schr\"{o}dinger operator theory \cite{BL85}, in statistical physics relating to disordered and chaotic dynamical systems \cite{CPV93}, in wireless communication  networks \cite{TV04} and in combinatorics \cite{pz}. Further motivations from theoretical physics include  the DMPK equation
in mesoscopic quantum transport \cite{Be97,Do82,MPK88}, fluid turbulence \cite[eqn.(16)]{FGV01}  and time evolution models \cite[eqn.(1)]{May72} (the last two models can be immediately
recognized after discrete integration in time).

Products of complex Ginibre matrices, and of truncations of Haar distributed random unitary matrices, have attracted much
attention as examples of determinatal point processes with kernels possessing special integrability properties. The first advance in this direction was by
Akemann and coworkers \cite{AKW13,AIK13}, who derived the joint eigenvalue density and corresponding correlation
kernel in terms of orthogonal functions for the product  of complex Ginibre matrices; a double integral formula for the
correlation kernel was subsequently obtained by Kuijlaars and Zhang, cf. \cite[Prop.~5.1]{KZ}. These advances have opened up the possibility to study local statistical properties, see \cite{KZ,Fo14,KS14,KKS15} for the   hard edge limit and \cite{LWZ14} for bulk and soft edge limits. All these studies  form part of a fast paced and very recent literature relating to the integrability and exactly solvable properties of random matrix products.
Works relating to this theme which have appeared on the electronic preprint archive over the past few months (as of July 2015) include
\cite{LWZ14,GNT14,KKS15,Ku15,Ki15,Zh15,FW15,CKW15}; we refer the reader to \cite{AI15} for a recent survey article.
Here we contribute to this line of research by undertaking a comprehensive study of the hard edge state
formed by the singular values of (\ref{Y}).

The first point to note is that
the hard edge state in the case $A=0$ depends on $r$, and thus is no longer described by the correlation kernel (\ref{3.2}).
This fact  can be anticipated by an analysis
of the global density of the squared singular values \cite{agt3,FL14,neu,pz}. The global density,  which refers to the limiting density of eigenvalues of $N^{-r-1}Y^{*}Y$ as $N\rightarrow \infty$, is found to exhibit the hard edge singularity (see \cite{pz} or  \cite[eqn~(2.16)]{FL14})
\begin{equation}\label{f1}
 \frac{1}{\pi}\sin \frac{\pi}{r+2}\,   \lambda^{-1 + \frac{1}{r+2}} \qquad {\rm as} \quad \lambda \to 0^+,
\end{equation}
which has an $r$-dependent exponent. In fact with the eigenvalues of $Y^* Y$ scaled according to $X_j = N x_j$ $(j=1,\dots,N)$, as $N\rightarrow \infty$ the hard edge state in the case $A=0$
forms a determinantal point process with limiting correlation kernel
\begin{align}\label{r}
K^{{\rm h}, r} (x,y) & =
 {1 \over (2 \pi i)^2} \int_{-1/2 - i \infty}^{-1/2 + i \infty} du  \oint_{\Sigma} dt \,
   \prod_{j=-1}^r { \Gamma(\nu_j + u+1 ) \over   \Gamma(\nu_j + t+1 ) }
{\sin \pi u \over \sin \pi t}      {x^t y^{-u-1} \over u - t} \nonumber \\
& =
 \int_0^1   G^{1,0}_{0,r+2} \Big ({ \underline{\hspace{0.5cm}}
 \atop 0,-\nu_0,-\nu_1,\dots,-\nu_r} \Big | u x \Big ) G^{r+1,0}_{0,r+2} \Big ({ \underline{\hspace{0.5cm}}
 \atop \nu_0, \nu_1,\dots,\nu_r,0} \Big | u y \Big ) \, du,
\end{align}
where $G^{m,n}_{p,q}$ denotes the Meijer G-function defined by the contour integral
\begin{align}  G^{m,n}_{p,q} \Big ({a_1,\dots,a_p
 \atop b_1,\dots,b_q} \Big |z \Big )=  \frac{1}{2\pi i} \int_{\gamma}    \frac{\prod_{j=1}^{m} \Gamma(b_j+s)\prod_{j=1}^{n} \Gamma(1-a_j-s)}{\prod_{j=m+1}^{q} \Gamma(1-b_j-s)\prod_{j=n+1}^{p} \Gamma(a_j+s)}
  z^{-s} ds, \label{Gfunction}\end{align}
see \cite[Sect.~5.2]{Luke}   for the choice of the contour $\gamma$ and elementary properties of G-functions, or \cite{BS13} for a gentle introduction; it is worth mentioning a particular relation  between the generalized hypergeometric function ${}_pF_{q} $ and the Meijer G-function (cf. eqn~(14), \cite[Sect.~5.2]{Luke})
 \begin{multline} \, {}_p F_{q}\big(a_1,  \dots,a_p; b_1,  \dots,b_q;z\big) =  \\ \frac{\prod_{l=1}^{q} \Gamma(b_l+1) }{\prod_{l=1}^{p} \Gamma(a_l+1) } G^{1,p}_{p,q+1} \Big ({  1-a_1, \dots,1-a_p
 \atop 0,1-b_1, \dots,1-b_q} \Big |  -z \Big ),\label{FGrelation}\end{multline}
 which thus gives an alternative way of writing the first Meijer G-function in the final line of (\ref{r}).
These kernels were   described  in    \cite{KZ} and   are  named after    Meijer G-kernels in \cite{KS14}.  They also appear in the hard edge scaling for products with inverses of Ginibre matrices \cite{Fo14},  products of truncated unitary matrices \cite{KS14},   Cauchy two matrix models \cite{BB14,BGS14,FK14}, and Muttalib-Borodin biorthogonal ensembles \cite{Bo98,Mu95} (cf.~\cite{KS14} for the relationship between Borodin's expression and   Meijer G-kernels).

  As noted in \cite[Sect.~5.3]{KZ}, in the case $r=0$ the facts that
\begin{equation}\label{GJ}
G^{1,0}_{0,2} \Big ({ \underline{\hspace{0.5cm}}
 \atop 0,-\nu} \Big | u x \Big ) = (ux)^{-\nu/2} J_\nu(2 \sqrt{ux}), \ \
 G^{1,0}_{0,2} \Big ({ \underline{\hspace{0.5cm}}
 \atop \nu,0} \Big | u y \Big ) = (uy)^{\nu/2} J_\nu(2 \sqrt{uy}),
 \end{equation}
 show
 \begin{equation}\label{GJa}
 K^{{\rm h},0}(x,y)= 4 ( {y / x})^{\nu/2} K^{{\rm h}}(4x,4y).
 \end{equation}
 The factor of $ ( y / x  )^{\nu/2} $ cancels out of the determinant (\ref{3.1}), while the factors of $4$ are accounted
 for by this same factor being present in the scaling leading to (\ref{3.2}); recall the text leading to this equation.

Consider now (\ref{Y}) with
\begin{equation}\label{AN}
A = \sqrt{b N} I_{(N + \nu_0)  \times N},
\end{equation}
where $I_{(N + \nu_0) \times N}$ denotes the $(N + \nu_0) \times N$ rectangular matrix with $1$'s on the main diagonal,  and 0's
elsewhere. It was shown recently in \cite[Remark 3.4]{FL14} that there is a critical value of $b=1$   for which as $N \to \infty$ the left hand edge of the support of the
global scaled squared singular values equals 0 for the last time as $b$ increases from 0. Moreover, it was shown that the singularity of the global density has the
leading form
\begin{equation}\label{f2}
\frac{1}{\pi}\sin \frac{2\pi}{2r+3}\,   \lambda^{-1 + \frac{2}{2r+3}} \qquad {\rm as} \quad \lambda \to 0^+,
\end{equation}
which gives rise to a different family of exponents to those in (\ref{f1}). We remark that the fractional part of the exponents,  $1/(r+2)$ and $1/(r+3/2)$ respectively in  \eqref{f1} and \eqref{f2},  are the reciprocals of positive integers and half-integers, which given knowledge of the correlation kernel (\ref{r}) and its analogue in relation to (\ref{f2}) to be established herein (see eqn.~(\ref{criticalk}) below),
is coincident with them being the simplest in terms of tractability of the general rational fractional exponents accessible in the
Raney family (see e.g.~\cite[eqn.~(2.16)]{FL14}), so named due to the sequence formed by the moments of the global density.

Let $A$ be again given by (\ref{AN}),   and  consider the case $r=0$ in  \eqref{Y}  so that $Y = G_0 + \sqrt{b N} I_{(N+\nu_0) \times N}$.
It is well known that the squared singular values allow for an interpretation as the positions of non-intersecting particles on the half line evolving
according to the squared Bessel process with parameter $d=2(\nu_0 +1)$ (see e.g.~\cite{KO01,KT10}).
In this interpretation the particles all begin at the same point ${bN}$, evolve for $t=1$ time units, and furthermore are conditioned to remain
non-intersecting if the process was to continue to $t \to \infty$.
The support of the density of such a process with a delta function
initial condition $N \delta(x-a)$ is, for $Nt < a$ equal at leading order to $[- Nt + a,  Nt + a]$ (this fact is implied by results in \cite{KMW09}, for example),
so we see that with $a =  bN$ and $t=1$, the particles first come in contact with the wall $x=0$ as $b$ is decreased to $b=1$.
A functional form of the hard edge scaled kernel
in the critical case $b=1$, generalised to a double scaling by setting $b = (1 -  \tau/\sqrt{N})^{-1}$, has recently been obtained in \cite{KFW11}.
In the present paper an alternative functional form to that in \cite{KFW11} is derived; see eqn.~(\ref{3.24}) below.
The kernel (\ref{3.24}), further specialised to $\nu_0 = -1/2$ reads
\begin{equation}\label{N1}
{1 \over 2 \pi^2 i} \Big ( {1 \over \xi \eta} \Big )^{1/4}
\int_0^\infty du \int_{i\mathbb{R}} dv \, {e^{-\tau u + {1 \over 2} u^2 + \tau v + {1 \over 2} v^2} \over u - v}
{\cos (2 \sqrt{u\xi}) \over u^{1/2} }\cos (2 \sqrt{v \eta}).
\end{equation}
With $\xi$ and $\eta$ replaced by squared variables, (\ref{N1}) is identified in \cite{KFW11} as the symmetric Pearcey kernel found in the study
\cite{BK10}.
Moreover, our method of derivation of this new functional form  in the case $r=0$ works equally as well for the double scaling of the critical kernel in the general $r$ case, which is our main theme.
The resulting explicit double contour integral expression is given in Theorem \ref{criticalkernel}  below.

\subsection{Main results} In preparation for the statement of our first  key result,
 let us introduce two auxiliary  functions. The first is defined to be
  \begin{multline}\Psi(u;x)= \frac{1}{(2\pi i)^{r}}    {1 \over \Gamma(\nu_0+1)} \int_{\gamma_1}d w_1\cdots \int_{\gamma_r}d w_r  \prod_{l=1}^r w_{l}^{-\nu_l-1}e^{w_l}\\ \times   e^{x/(w_1\cdots w_r)}\,
   {}_0F_1\big(\nu_0+1;
  - ux/(w_1\cdots w_r)\big),\label{function3}\end{multline}
where  $\gamma_1, \ldots, \gamma_r$ are paths  starting and ending at negative infinity and encircling the origin once in the positive direction,
or equivalently (cf. \eqref{function4.1-1}, Sect. \ref{sectiontruncatedunitary} below),
\begin{multline}\Psi(u;x)= {1 \over \Gamma(\nu_0+1)}  \frac{1}{ 2\pi i }       \int_{\gamma}d w \,(-x)^{-w} {}_1F_1\big(w;\nu_0+1;u\big) \\ \times \Gamma(w)
   \prod_{l=1}^{r} \frac{1}{\Gamma(\nu_l+1-w)}
   ,\label{function3-1}\end{multline}
 with $\gamma$    encircling all non-positive integers,
 while the other reads
   \be \Phi(v;y)= \frac{1}{2\pi i} \int_{c-i\infty}^{c+i\infty} ds\, y^{-s}  \phi(v;s) \prod_{l=1}^{r} \Gamma(\nu_l+s),\label{function4} \ee
 where $c>-\min\{\nu_0, \nu_1, \ldots, \nu_r\}$, and \begin{align} \phi(v;s)&=\frac{1}{\Gamma(\nu_0+1)}\int_{0}^{\infty} dt \, t^{\nu_0+s-1}e^{-t}{}_0F_1(\nu_0+1;-vt) \label{function2}\\
 &=\frac{\Gamma(\nu_0+s)}{\Gamma(\nu_0+1)}\, {}_1F_1(\nu_0+s;\nu_0+1;-v).  \label{function1}\end{align}
 In the case $r=0$ (\ref{function3}) is to be interpreted as
 \be \label{E0a}
\Psi(u;x)=    \frac{1}{\Gamma(\nu_0+1)} \, e^{x}\, {}_0F_1(\nu_0+1;-ux),
\ee
and a calculation shows that  (\ref{function4}) simplifies to read
\be \label{E1a}
\Phi(v;y)=   \frac{1}{\Gamma(\nu_0+1)} \, y^{\nu_0} e^{-y}\, {}_0F_1(\nu_0+1;-vy).
\ee
The two auxiliary functions appear in a double contour integral expression for the correlation kernel, which we
present next. Its significance is that it provides the starting point for further asymptotic analysis. The special case $r=0$ was previously obtained by Desrosiers and one of the present authors; see ~\cite[Prop.~5]{DF08}.

 \begin{prop}\label{pdf} Let   $Y$ be  defined by   (\ref{Y}), and suppose that all   eigenvalues  $a_1, \ldots, a_N$  of  $A^*A$  are positive.  The joint density  of  eigenvalues  for $Y^*Y$ can be
 written in the form
 \be \mathcal{P}_{N}(x_1,\ldots,x_N)=\frac{1}{N!} \det[K_N(x_i,x_j)]_{i,j=1}^{N} \label{deterpdf}
 \ee
with correlation kernel
 \be
K_N(x,y)=\frac{1}{2\pi i}\int_{0}^{\infty} du \int_{\mathcal{C}} dv \, u^{\nu_0}e^{-u+v} \Psi(u;x) \Phi(v;y)\frac{1}{u-v}\prod_{l=1}^{N}\frac{u+a_l}{v+a_l}, \label{kernelCD}\ee
where $\mathcal{C}$ is a counterclockwise contour encircling $-a_1,\ldots, -a_N$ but not $u$.
 \end{prop}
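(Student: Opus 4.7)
The plan is to establish the proposition by induction on $r$, starting from the known base case $r=0$ and iteratively incorporating one Ginibre factor at a time via the standard Mellin-convolution mechanism for products of Ginibre matrices.

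\textbf{Base case $(r=0)$.} When $Y = G_0 + A$, the matrix $Y^* Y$ is a complex noncentral Wishart, whose eigenvalue joint density admits the determinantal representation (\ref{deterpdf}) with a kernel of exactly the form (\ref{kernelCD}) restricted to $r = 0$, where $\Psi$ and $\Phi$ reduce to (\ref{E0a}) and (\ref{E1a}). This is the content of \cite[Prop.~5]{DF08}, as noted by the authors in the paragraph immediately preceding Proposition~\ref{pdf}.

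\textbf{Inductive step $(r \mapsto r+1)$.} Assume the result holds at level $r$, and write the new product as $Y' = G_{r+1} Y$. Conditional on the squared singular values of $Y^* Y$, the joint conditional density of those of $Y'^* Y' = Y^* G_{r+1}^* G_{r+1} Y$ is a classical Wishart transition kernel which, after integrating out the unitary degrees of freedom by the complex HCIZ formula, takes the determinantal form
\[
 p(x' \,|\, x) \;\propto\; \frac{\Delta(x')}{\Delta(x)} \det\!\Bigl[\tfrac{1}{\Gamma(\nu_{r+1}+1)} (x_j')^{\nu_{r+1}}\, x_i^{-\nu_{r+1}-1}\, e^{-x_j'/x_i}\Bigr]_{i,j=1}^{N}
\]
(see e.g.~\cite[Sect.~2]{KS14}). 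Multiplying by the inductive expression for $\mathcal{P}_N(x)$ and applying Andr\'eief's identity reduces the $x$-integration to a single-variable integral operator acting separately on $\Psi$ and $\Phi$ in (\ref{kernelCD}); explicitly, $\Phi(v;y)$ gets replaced by
\[
 \Phi^{\rm new}(v;y) \;=\; \frac{1}{\Gamma(\nu_{r+1}+1)} \int_{0}^{\infty} z^{-\nu_{r+1}-1}\, y^{\nu_{r+1}}\, e^{-y/z}\, \Phi(v;z)\, dz,
\]
with an analogous transform for $\Psi$. Inserting the Mellin representation (\ref{function4})--(\ref{function1}) and evaluating the resulting Euler integral in $z$ shows that the transform appends exactly one extra factor $\Gamma(\nu_{r+1}+s)$ to the Mellin integrand of $\Phi$, matching the definition of $\Phi$ at level $r+1$. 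The dual representation (\ref{function3-1}) yields the analogous statement for $\Psi$: one gains an additional Hankel factor $\tfrac{1}{2\pi i}\int_{\gamma_{r+1}} w^{-\nu_{r+1}-1} e^w\,(\,\cdot\,)\, dw$ with $x \mapsto x/w$, which is precisely (\ref{function3}) at level $r+1$. The remaining outer structure of (\ref{kernelCD})---the weight $u^{\nu_0} e^{-u+v}/(u-v)$, the rational factor $\prod_l (u+a_l)/(v+a_l)$, and the choice of contours in $u$ and $v$---is untouched, because $G_{r+1}$ is independent of the source $A$.

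\textbf{Main obstacle.} The difficulty lies in the contour-manipulation bookkeeping of the inductive step: one must justify interchanging the Mellin vertical line of $\Phi$, the real $z$-integration, the Hankel contours of $\Psi$, and the outer integrals in $u$ and $v$, and check that the Hankel loop $\gamma_{r+1}$ may be deformed to the stated curve from negative infinity around the origin so that the induction closes cleanly. A directly parallel calculation for the source-free case $A = 0$ is carried out in \cite{AKW13,KZ}; the present plan essentially imports that mechanism while carrying the source-dependent factor intact throughout, so the genuinely new input is tracking that factor through the Andr\'eief reduction.
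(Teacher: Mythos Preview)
Your inductive strategy on $r$ is a legitimate alternative to the paper's direct computation, but the description of the inductive step contains a real gap. Andr\'eief's identity, applied to the transition density $p(x'|x)$ against the biorthogonal form of $\mathcal{P}_N$ at level $r$, does \emph{not} produce operators acting ``separately on $\Psi$ and $\Phi$''. What Andr\'eief yields is precisely the Kuijlaars--Stivigny transform $\xi_j\mapsto\xi'_j$ on the second family of the biorthogonal ensemble (hence your stated transform on $\Phi$); the Vandermonde $\det[x_j^{i-1}]$ is preserved, $\eta_i(x)=x^{i-1}$ is unchanged, and Andr\'eief says nothing about $\Psi$. The reason an extra Hankel loop appears on the $\Psi$ side is a separate fact you have not supplied: the Gram matrix scales as $g'_{i,j}=\Gamma(\nu_{r+1}+i)\,g_{i,j}$, a purely left-diagonal change, whence $c'_{i,j}=c_{i,j}/\Gamma(\nu_{r+1}+i)$; only then does the Hankel representation of $1/\Gamma$ let you absorb this factor into $x^{i-1}$ and produce the additional $\gamma_{r+1}$-contour in (\ref{function3}). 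Without tracking how the inverse Gram matrix changes, there is no mechanism by which the transition touches the first argument of the kernel, and the induction does not close. This --- not the contour bookkeeping you flag as the main obstacle --- is the genuine missing step.

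The paper proceeds differently. It first records the biorthogonal ensemble (\ref{B}) at level $r$ in one stroke, computes $g_{i,j}$ explicitly in (\ref{gL}), and then, rather than inverting $[g_{i,j}]$, characterises the $c_{i,j}$ implicitly via the Lagrange-interpolation identity (\ref{csumidentity}). The Laguerre integral (\ref{laguerreformula2}) together with the Hankel representation (\ref{intrepgamma1}) converts $\sum_i c_{i,j}\,x^{i-1}$ into a $u$-integral involving $\Psi(u;x)$, identity (\ref{csumidentity}) collapses the $i$-sum, and the remaining $j$-sum is recognised as a sum of residues at $\{-a_l\}$ to give the $v$-contour. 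Once your induction is patched with the diagonal-Gram observation above it would bypass (\ref{csumidentity}) entirely and be arguably more elementary; but as written the proposal is missing exactly the idea that carries the work on the $\Psi$ side.
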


\begin{remark} \label{nullremark}
When some of the   parameters $a_l$'s are null, the double integral representation  \eqref{kernelCD} remains valid provided that  $\int_{0}^{\infty} du$  is interpreted as $\lim_{\varepsilon\rightarrow 0^+}\int_{\varepsilon}^{\infty} du$, or   for given $u>0$ $\mathcal{C}$ is chosen such that $\textrm{Re} \{v\}<u$ with any  $v \in \mathcal{C}$. \end{remark}

One of the  main results in the present paper concerns a  double scaling limit near the  critical point, which permits a new family of  limiting kernels.
\begin{theorem}[Critical kernel]\label{criticalkernel}  With the  kernel \eqref{kernelCD},   for $\tau \in \mathbb{R}$ let
\be a_1= \cdots= a_N= N (1- \tau/\sqrt{N} )^{-1}.
 \label{doublescaling}\ee     Then we have
\begin{align}  \lim_{N\rightarrow \infty}\frac{1}{ \sqrt{N}} K_N\Big(\frac{\xi}{\sqrt{N}},\frac{\eta}{\sqrt{N}}\Big) & =   \frac{1}{2\pi i}\int_{0}^{\infty}du \int_{ i\mathbb{R}} dv \ \Big (\frac{u}{v} \Big )^{\nu_0}\frac{e^{- \tau u-\frac{1}{2}u^2 +\tau v+\frac{1}{2}v^2}}{u-v} \nonumber \\  \times \,
G_{0,r+2}^{1,0}& \Big({\atop 0, -\nu_0, -\nu_1,\ldots,-\nu_r}\Big|u\xi\Big) G_{0,r+2}^{r+1,0} \Big({\atop \nu_0,  \nu_1,\ldots,\nu_r,0}\Big|v\eta\Big) \nonumber \\
& =: {\mathcal K}^{{\rm h},r}(\xi,\eta;\tau), \label{criticalk}\end{align}
valid uniformly for $\xi, \eta$ in any compact set  of $(0,\infty)$ and for $\tau$ in any compact set  of $\mathbb{R}$.
 \end{theorem}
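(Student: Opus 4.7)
The plan is to derive the critical kernel by asymptotic analysis of the double contour integral of Proposition \ref{pdf}. Under the specialisation $a_1=\cdots=a_N=a:=N(1-\tau/\sqrt{N})^{-1}$ the product factor collapses to $((u+a)/(v+a))^N$. First I substitute $x=\xi/\sqrt{N}$, $y=\eta/\sqrt{N}$ and rescale $u\mapsto\sqrt{N}\,u$, $v\mapsto\sqrt{N}\,v$ in the integrand: the prefactor $N^{-1/2}$, the Jacobian $N\,du\,dv$, and the $N^{-1/2}$ coming from $1/(u-v)$ combine to $1$. Before taking the limit I deform the image of $\mathcal{C}$ (a closed curve now encircling $-a/\sqrt{N}\to-\infty$) to the imaginary axis $i\mathbb{R}$. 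This is legitimate because the only singularity of the integrand in the right half-plane is the simple pole at $v=u>0$, while $(v+a)^{-N}$ forces super-polynomial decay at infinity, so the imaginary axis can be closed by a large semicircle in the left half-plane at no cost, recovering the original loop around $-a$.

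The core of the proof is a pointwise analysis of the three $N$-dependent pieces. A Taylor expansion $N\log(1+\sqrt{N}u/a)=\sqrt{N}\,u-\tau u-\tfrac12 u^2+O(N^{-1/2})$ (and analogously for $v$) yields
\begin{equation*}
e^{-\sqrt{N}u+\sqrt{N}v}\!\left(\frac{\sqrt{N}u+a}{\sqrt{N}v+a}\right)^{\!N}\longrightarrow \exp\!\Big(-\tau u-\tfrac12 u^2+\tau v+\tfrac12 v^2\Big),
\end{equation*}
the $\sqrt{N}$-terms cancelling exactly. For $\Psi(\sqrt{N}u;\xi/\sqrt{N})$ I use representation \eqref{function3}: since $ux=u\xi$ is $N$-independent and $e^{\xi/(\sqrt{N}\prod w_l)}\to 1$, expanding the ${}_0F_1$ as a power series and evaluating each resulting Hankel loop via $(2\pi i)^{-1}\int w^{-\nu_l-k-1}e^w dw=1/\Gamma(\nu_l+k+1)$ produces $\sum_{k\ge 0}(-u\xi)^k/(k!\prod_{l=0}^r\Gamma(\nu_l+k+1))$, which is precisely the Meijer $G$-function $G^{1,0}_{0,r+2}$ in \eqref{criticalk}. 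For $\Phi$ I start from the Mellin--Barnes form \eqref{function4} and substitute the classical asymptotic ${}_1F_1(\nu_0+s;\nu_0+1;-\sqrt{N}v)\sim \Gamma(\nu_0+1)(\sqrt{N}v)^{-\nu_0-s}/\Gamma(1-s)$, so that the Mellin--Barnes integrand assembles into $(\sqrt{N}v)^{-\nu_0}G^{r+1,0}_{0,r+2}(\,\cdot\,|v\eta)$. Multiplied with $u^{\nu_0}=(\sqrt{N})^{\nu_0}u^{\nu_0}$ the $\sqrt{N}$-powers cancel and produce the $(u/v)^{\nu_0}$ prefactor in \eqref{criticalk}.

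To convert pointwise convergence into convergence of the integral I invoke dominated convergence. The Gaussian $e^{-u^2/2}$ supplies decay on the semi-infinite $u$-range, while $\mathrm{Re}(v^2/2)=-(\mathrm{Im}\,v)^2/2$ on $i\mathbb{R}$ controls the $v$-integral at infinity. The hardest step I anticipate is producing uniform-in-$N$ majorants for the $\Psi\Phi$ product along the deformed contour: the ${}_1F_1$ asymptotic above is only valid in restricted sectors of $\mathbb{C}$ with non-trivial Stokes corrections, so the $v$-contour must be chosen carefully (for instance a hyperbolic arc through the saddle at $v=-\tau$ of the Gaussian factor) and coupled with explicit remainder estimates for both ${}_1F_1$ and ${}_0F_1$. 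Once such uniform bounds are secured, and the pole at $v=u$ handled by a uniform indentation, dominated convergence yields the stated limit, with uniformity on compact subsets of $(0,\infty)^2\times\mathbb{R}$ inherited directly from the uniformity of the underlying pointwise estimates.
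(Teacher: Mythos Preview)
Your strategy coincides with the paper's: rescale the double integral, Taylor expand $N\log(1+\,\cdot\,)$ to produce the Gaussian factor $e^{-\tau u-u^2/2+\tau v+v^2/2}$, and reduce $\Psi,\Phi$ to the two Meijer $G$-functions using respectively the definition \eqref{function3} and the $_1F_1$ large-argument asymptotic inside the Mellin--Barnes form \eqref{function4}. The paper carries out the rescaling in two stages ($u\mapsto Nu$ followed at the end by $u\mapsto u/\sqrt N$), but the net effect is your single substitution $u\mapsto\sqrt N\,u$.

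The one substantive divergence is the $v$-contour, and this is precisely the gap you flag. You deform to $i\mathbb{R}$ \emph{before} invoking the $_1F_1$ asymptotic and then apply the expansion valid for $\mathrm{Re}\,v>0$. That is not legitimate on the imaginary axis: the Stokes line for $_1F_1(\nu_0+s;\nu_0+1;-\sqrt N\,v)$ is exactly $\mathrm{Re}\,v=0$, where the second term $\Gamma(\nu_0+1)\Gamma(\nu_0+s)^{-1}e^{-\sqrt N v}(-\sqrt N v)^{s-1}$ has the same modulus as the first. The paper avoids this by keeping a \emph{closed} contour (a near-unit circle about $-b$ with a small wedge through the origin), then splitting it into $\mathcal{C}_+=\mathcal{C}\cap\{\mathrm{Re}\,v>0\}$ and $\mathcal{C}_-=\mathcal{C}\cap\{\mathrm{Re}\,v<0\}$ and applying the two \emph{different} asymptotics \eqref{1asyptotics1f1} and \eqref{2asyptotics1f1} in the respective halves. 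On $\mathcal{C}_+$ the desired Meijer $G$-limit emerges and the segment is deformed to the imaginary axis afterwards; on $\mathcal{C}_-$ one has $|1+v/b|\ge 1+\delta$ uniformly, so the factor $(v+b)^{-N}$ forces the whole contribution to be $O(N^{c'}e^{-N\log(1+\delta)})$. Your dominated-convergence scheme on $i\mathbb{R}$ cannot replace this split as written, because the Mellin--Barnes justification you give for the pointwise limit of $\Phi$ simply fails on the Stokes line. (A cleaner route to the pointwise limit of $\Phi$ on $i\mathbb{R}$ is to bypass Mellin--Barnes entirely and use the $r$-fold integral representation \eqref{T}, where the $N$-dependence enters only through $e^{-\eta/(\sqrt N\,T)}$ and $(\eta/(\sqrt N\,T))^{\nu_0}$; this gives the correct Meijer $G$-limit without sector restrictions, but you would still need a uniform-in-$N$ integrable majorant to close the argument.)
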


In the special case $r=0$, upon making use of (\ref{GJ}) we see from (\ref{criticalk}) that
\begin{multline}\label{3.24}
   \Big ( {\xi \over \eta} \Big )^{\nu_0/2}   \mathcal K^{{\rm h},0}\Big ( {\xi  }, {\eta  };\tau \Big )  =
 \frac{1}{2\pi i}\int_{0}^{\infty}du \int_{i\mathbb{R}} dv \ \Big (\frac{u}{v} \Big )^{\nu_0/2}\frac{e^{- \tau u-\frac{1}{2}u^2 +\tau v+\frac{1}{2}v^2}}{u-v}  \\  \times \,
  J_{\nu_0}(2\sqrt{u\xi}) J_{\nu_0}(2\sqrt{v\eta}),
\end{multline}
where the integral form on the RHS of the above equation  is similar to \eqref{kernelCD} with $r=0$ (cf.~\cite[Prop.~5]{DF08}).
In the study \cite[displayed equation below (1.34)]{KFW11}, this kernel was conjectured to be an equivalent functional form to that derived therein in the  case $r=0$. Our work thus provides a direct way of deriving (\ref{3.24}) for the $r=0$ critical kernel.
 Recently, an understanding of the resulting functional identity has been given in   \cite[Remark 2.26]{DV14}.

Theorem \ref{criticalkernel} quantifies the limiting correlation kernel for the situation that $a_k = N(1- \tau/\sqrt{N})^{-1}$ $(k=1, \ldots, N)$, which is shown to depend on $\tau$, thus
justifying the term critical kernel. A variation on this setting is to have at most finitely many source eigenvalues, say $a_1, \ldots, a_m$, go to infinity   at a   smaller but appropriate scale and others  remain  at the same critical value. This gives rise to  a multi-parameter     deformation of  the critical kernel   \eqref{criticalk}.

\begin{theorem} [Deformed critical kernel]\label{deformedcriticalkernel}
 With   the kernel    \eqref{kernelCD}, for a fixed nonnegative  integer $m$,  let
\be a_j=\sqrt{N}\sigma_j, \,  j=1, \ldots, m \  \mbox{and} \  a_k=N(1- \tau/\sqrt{N})^{-1}, \, k=m+1, \ldots, N,
 \label{mconfluentsource}\ee
 where $\tau\in \mathbb{R}$ and $\sigma_1, \ldots, \sigma_m>0$. Then   we have
\begin{align}  \lim_{N\rightarrow \infty}\frac{1}{ \sqrt{N}} K_N\Big(\frac{\xi}{\sqrt{N}},\frac{\eta}{\sqrt{N}}\Big) & =   \frac{1}{2\pi i}\int_{0}^{\infty}du \int_{ -c-i\infty}^{-c+i\infty} dv \ \Big (\frac{u}{v} \Big )^{\nu_0}\frac{e^{- \tau u-\frac{1}{2}u^2 +\tau v+\frac{1}{2}v^2}}{u-v} \nonumber \\  \times \, \prod_{j=1}^{m}\frac{u+\sigma_j}{v+\sigma_j}\,
G_{0,r+2}^{1,0}& \Big({\atop 0, -\nu_0, -\nu_1,\ldots,-\nu_r}\Big|u\xi\Big) G_{0,r+2}^{r+1,0} \Big({\atop \nu_0,  \nu_1,\ldots,\nu_r,0}\Big|v\eta\Big) \nonumber \\
& =: {\mathcal K}^{{\rm h},r}_{m}(\xi,\eta;\tau,{\sigma}), \label{deformedcriticalk}\end{align}
where $0<c<\min\{\sigma_1,\ldots,\sigma_m\}$.
 \end{theorem}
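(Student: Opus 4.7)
The plan is to mirror the proof of Theorem \ref{criticalkernel}, adding the extra ingredient coming from the $m$ sub-critical source eigenvalues at scale $\sqrt{N}$. Starting from the finite-$N$ kernel \eqref{kernelCD}, I would perform the change of variables $u \mapsto \sqrt{N}\,u$, $v \mapsto \sqrt{N}\,v$ together with $x = \xi/\sqrt{N}$, $y = \eta/\sqrt{N}$. The Jacobian $\sqrt{N}\cdot\sqrt{N}$ from $du\,dv$, the $\sqrt{N}$ appearing in $1/(u-v)$, and the external prefactor $1/\sqrt{N}$ in front of $K_N$ combine to give an overall $\sqrt{N}$; the remaining $N$-dependence is then isolated in the source product, in the pair $\Psi\Phi$, and in the prefactor $u^{\nu_0}$.

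The first key step is the decomposition
\[
\prod_{l=1}^N \frac{\sqrt{N}\,u + a_l}{\sqrt{N}\,v + a_l}
\;=\; \prod_{j=1}^m \frac{u+\sigma_j}{v+\sigma_j}\;
\prod_{k=m+1}^N \frac{\sqrt{N}\,u + a_k}{\sqrt{N}\,v + a_k}.
\]
The first product is exact and $N$-independent because $a_j = \sqrt{N}\,\sigma_j$. The second product is precisely the one treated in the proof of Theorem \ref{criticalkernel}: a logarithmic expansion of each of the $(N-m)$ critical factors yields
\[
(N-m)\log\frac{\sqrt{N}\,u + a}{\sqrt{N}\,v + a}
= \sqrt{N}(u-v) - \tau(u-v) - \tfrac{1}{2}(u^2 - v^2) + O(N^{-1/2}),
\]
so that, combined with the factor $e^{-\sqrt{N}(u-v)}$ coming from $e^{-u+v}$, the divergent $\sqrt{N}$-terms cancel and one is left with the Gaussian weight $\exp(-\tau u - \tfrac{1}{2}u^2 + \tau v + \tfrac{1}{2}v^2)$ of \eqref{criticalk}; the absence of the $m$ removed critical factors perturbs this limit by a multiplicative factor tending to $1$.

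The second ingredient is the identification of the scaled limits of $\Psi$ and $\Phi$ with Meijer G-functions, which is again parallel to the proof of Theorem \ref{criticalkernel}. From the representation \eqref{function3}, expanding the ${}_{0}F_{1}$ as a power series and using Hankel's formula $\frac{1}{2\pi i}\int_{\gamma}w^{-s}e^{w}\,dw = 1/\Gamma(s)$ for each $w_{l}$-integral gives term-by-term
\[
\Psi\bigl(\sqrt{N}\,u;\xi/\sqrt{N}\bigr) \longrightarrow \sum_{k=0}^{\infty} \frac{(-u\xi)^{k}}{k!\prod_{l=0}^{r}\Gamma(\nu_l+1+k)} = G^{1,0}_{0,r+2}\Big({\,\atop 0,-\nu_0,\ldots,-\nu_r}\Big|u\xi\Big).
\]
A parallel analysis of $\Phi$ via \eqref{function4}, after the substitution $t \mapsto t/\sqrt{N}$ in \eqref{function2} and evaluation of the resulting Mellin integral, yields
\[
\Phi\bigl(\sqrt{N}\,v;\eta/\sqrt{N}\bigr) \longrightarrow (\sqrt{N}\,v)^{-\nu_{0}}\,G^{r+1,0}_{0,r+2}\Big({\,\atop \nu_0,\ldots,\nu_r,0}\Big|v\eta\Big).
\]
The residual factor $(\sqrt{N}\,v)^{-\nu_{0}}$ cancels the $(\sqrt{N}\,u)^{\nu_{0}}$ coming from $u^{\nu_{0}}$, leaving the factor $(u/v)^{\nu_{0}}$ of \eqref{deformedcriticalk}.

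The step I expect to be the most delicate is the deformation of the $v$-contour and the uniform control needed to pass to the limit. After rescaling, the original contour $\mathcal{C}$ encircles the $m$ outlier poles $v = -\sigma_j$ together with the point $v = -\sqrt{N}(1-\tau/\sqrt{N})^{-1}$, which recedes to $-\infty$. I would deform $\mathcal{C}$ into the vertical line $\{\mathrm{Re}\,v = -c\}$ with $0<c<\min_{j}\sigma_j$: the outlier poles all lie strictly to the left of this line and are never crossed during the deformation, so no residue contributions arise, while the receding critical pole escapes to $-\infty$. Justifying the deformation requires estimates on $\Phi$ on horizontal arcs $|\mathrm{Im}\,v|\to\infty$ ensuring that the connecting arcs contribute nothing, together with the Gaussian envelope $\exp(\mathrm{Re}(\tau v + v^{2}/2))$ providing dominated-convergence control on the vertical line; the outlier product $\prod_{j=1}^{m}(u+\sigma_j)/(v+\sigma_j)$, being a rational function of bounded degree, is harmless in these estimates. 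Combining all the above ingredients yields \eqref{deformedcriticalk} uniformly for $(\xi,\eta,\tau)$ in compact subsets of $(0,\infty)^{2}\times\mathbb{R}$.
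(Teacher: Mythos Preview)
Your approach is essentially the paper's: rescale, Taylor-expand the logarithm of the critical product to extract the Gaussian weight, and identify the Meijer $G$-functions as the limits of $\Psi$ and $\Phi$. The paper is equally terse, writing only ``Proceeding as in the proof of Theorem~\ref{criticalkernel}, Taylor expanding $f(z)$ at $z=0$, and rescaling $u,v$ by $u/\sqrt{N}, v/\sqrt{N}$, we can complete the proof,'' and the only structural difference is that the paper first rescales by $N$ (so as to reuse verbatim the saddle-point and contour-splitting machinery of Theorem~\ref{criticalkernel}) and then zooms in by $1/\sqrt{N}$, whereas you rescale by $\sqrt{N}$ in one step.

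There is, however, one genuine gap in your argument as written. Your asymptotic
\[
\Phi\bigl(\sqrt{N}\,v;\eta/\sqrt{N}\bigr) \longrightarrow (\sqrt{N}\,v)^{-\nu_{0}}\,G^{r+1,0}_{0,r+2}\Big({\,\atop \nu_0,\ldots,\nu_r,0}\Big|v\eta\Big)
\]
rests on either the ${}_1F_1$ asymptotic \eqref{1asyptotics1f1} or, equivalently, on your substitution $t\mapsto t/\sqrt{N}$ in \eqref{function2}; both are valid only for $\mathrm{Re}\,v>0$. For $\mathrm{Re}\,v<0$ the other asymptotic \eqref{2asyptotics1f1} applies and $\Phi(\sqrt{N}\,v;\cdot)$ carries an extra factor $e^{-\sqrt{N}\,v}$, so on your proposed contour $\mathrm{Re}\,v=-c$ the integrand does \emph{not} converge to the claimed limit (the uncompensated $|e^{-\sqrt{N}\,v}|=e^{c\sqrt{N}}$ diverges). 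The paper handles exactly this issue via the $\mathcal{C}_{+}/\mathcal{C}_{-}$ splitting inherited from Theorem~\ref{criticalkernel}: on $\mathcal{C}_{+}$ (right half-plane) the Meijer $G$ limit holds and one deforms to $i\mathbb{R}$; on $\mathcal{C}_{-}$ the exponentially bad $\Phi$-asymptotic is beaten by the saddle-point decay and that piece is negligible. Only \emph{after} the limit is established on $i\mathbb{R}$ can the contour be shifted to $-c+i\mathbb{R}$, which is free since $0<c<\min_j\sigma_j$ and no poles are crossed. Your outline needs this reordering --- take the limit on the imaginary axis, then shift --- to be correct.
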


In the simplest case $r=0$, upon making use of (\ref{GJ}) we see from (\ref{deformedcriticalk}) that
\begin{multline}\label{r=0m}
   \Big ( {\xi \over \eta} \Big )^{\nu_0/2}   \mathcal K^{{\rm h},0}_{m}\Big ( {\xi  }, {\eta  };\tau \Big )  =
 \frac{1}{2\pi i}\int_{0}^{\infty}du \int_{ -c-i\infty}^{-c+i\infty} dv   \\  \times \Big (\frac{u}{v} \Big )^{\nu_0/2}\frac{e^{- \tau u-\frac{1}{2}u^2 +\tau v+\frac{1}{2}v^2}}{u-v}  \, \prod_{j=1}^{m}\frac{u+\sigma_j}{v+\sigma_j}\,
  J_{\nu_0}(2\sqrt{u\xi}) J_{\nu_0}(2\sqrt{v\eta}). \
\end{multline}
Even in this special case, the kernel (\ref{deformedcriticalk}) appears to be  new.

We remark that the inter-relationship  between the interpolating kernel \eqref{deformedcriticalk} and critical kernel \eqref{criticalk} is similar in form to that  between the interpolating Airy kernel  and Airy kernel (see e.g.~\cite{BBP,ADvM09}). Furthermore,  as the parameter  $b$ displayed in eqn.~(\ref{AN}) increases from zero,  we will  establish a phase transition at the hard edge from the Meijer G-kernel  (cf.~Theorem  \ref{noncriticalkernel})  to the critical  and deformed critical kernels (cf.~Theorems   \ref{criticalkernel} and \ref{deformedcriticalkernel}), then to the shifted mean LUE   kernel   (cf.~Theorem \ref{supercriticalkernel}); see Section \ref{sectionhardlimit} for more details. A similar phase transition occurs in another  random matrix  product
 $T_r T_{r-1} \cdots T_1 (G_0 + A)$, with each $T_j$ being a truncated unitary matrix; see Section \ref{sectiontruncatedunitary}.

The paper is organized as follows.  Section \ref{sectioneigenvaluepdf} is devoted to the joint eigenvalue   probability density function (PDF)  and a double contour integral representation for the correlation kernel of the squared singular values of the   product \eqref{Y}.  The proof of Proposition \ref{pdf} will be given,  and
the formulas for the average  of the  ratio of characteristic polynomials and a single (inverse) characteristic polynomial are also derived. In Section \ref{sectionhardlimit} the hard edge limits of the kernel in different regimes are evaluated, which include the proofs of Theorems \ref{criticalkernel} and \ref{deformedcriticalkernel}. Our methods are used to similarly analyse the product of $r$ truncated unitary matrices and one shifted mean Ginibre matrix
in Section \ref{sectiontruncatedunitary}. In Section \ref{sectiondiscussion} further discussions on asymptotics for large   variables, and some open problems, are presented.

\section{Eigenvalue PDF and double integral  for correlation kernel} \label{sectioneigenvaluepdf}

\subsection{Correlation kernels}
Consider (\ref{Y}) in the case $r=0$. Let $x_1,\dots,x_N$ and $a_1,\dots,a_N$ denote the eigenvalues of $Y^*Y$ and $A^*A$ respectively. It is well known
(see e.g.~\cite[Prop.~5]{DF08}, \cite[\S 11.6]{Fo10}) that the eigenvalue  PDF of the random matrix $Y^*Y$ is an example of a biorthogonal ensemble
\cite{Bo98}
 \be \mathcal{Q}_{N}(x_1,\ldots,x_N)=\frac{1}{Z_N} \det[\eta_i(x_j)]_{i,j=1}^{N}\det[\xi_i(x_j)]_{i,j=1}^{N},\label{chGUEsourcepdf}\ee
where  $\eta_i(x)=x^{i-1}$,  $\xi_i(x)=x^{\nu_0}e^{-x} {}_0F_1(\nu_0+1;a_i x)$, and $Z_N$ denotes the normalisation. Our first task is to specify a functional form
for the joint eigenvalue PDF of $Y^* Y$ in the case of general $r$. For this purpose use will be made of a recent result due to Kuijlaars and Stivigny \cite{KS14}.

\begin{prop}[Special case of {\cite[Thm.~2.1]{KS14}}] \label{P5}
Let $W$ be an $n \times n$ random matrix, and suppose that  the eigenvalue PDF of $W^* W$  can be written in the form
\begin{equation}\label{2.3w}
\prod_{1 \le j < k \le n} (x_k - x_j) \det [ f_{k-1}(x_j) ]_{j,k=1}^n
\end{equation}
for some $\{f_{k-1}(x) \}_{k=1,\dots,n}$.  For $\nu \ge 0$, let $G$ be an $(n + \nu) \times n$  standard complex Gaussian matrix. The squared singular values of $GW$, or equivalently the eigenvalues of $(GW)^{*} GW$, then have their PDF proportional to
\begin{equation}\label{2.3wA}
\prod_{1 \le j < k \le n} (y_k - y_j)  \det [ g_{k-1}(y_j) ]_{j,k=1}^n,
\end{equation}
where
\begin{equation}\label{2.3wB}
g_k(y) = \int_0^\infty x^\nu e^{-x} f_k \Big ( {y \over x} \Big ) \, {dx \over x}, \qquad
k=0,\dots,n-1.
\end{equation}
\end{prop}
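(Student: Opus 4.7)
The plan is to condition on the eigenvalues of $W^{*}W$, derive the conditional joint density of the squared singular values of $GW$, and then use Andr\'eief's identity to collapse the integration against the marginal \eqref{2.3w}.

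First I would reduce to the case where $W$ is diagonal. Because the law of $G$ is invariant under left and right multiplication by unitaries, only the singular values of $W$ are relevant, so I may replace $W$ by $D:=\mathrm{diag}(\sqrt{x_1},\ldots,\sqrt{x_n})$. The density of $M=GD$ is then proportional to $\prod_j x_j^{-(n+\nu)}\exp(-\tr(M^{*}MD^{-2}))$, and I would pass to the rectangular SVD $M=U\Sigma V^{*}$ with $U\in U(n+\nu)$, $V\in U(n)$ and diagonal entries $\sqrt{y_j}$ in $\Sigma$. The Jacobian of this change of variables contributes the factor $\Delta(y)^{2}\prod_j y_j^{\nu}$; the integral over $U$ is a pure constant; and the integral over $V$ is a Harish-Chandra--Itzykson--Zuber integral evaluating to $\det[e^{-y_i/x_j}]_{i,j=1}^{n}/(\Delta(y)\,\Delta(-1/\vec{x}))$. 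After simplifying using $\Delta(-1/\vec{x})=\Delta(x)/\prod_j x_j^{n-1}$ and collecting powers of $x_j$, the conditional density takes the form
\begin{equation*}
p(y\,|\,x)\;\propto\;\frac{\Delta(y)}{\Delta(x)}\,\det\!\left[\frac{y_i^{\nu}}{x_j^{\nu+1}}\,e^{-y_i/x_j}\right]_{i,j=1}^{n}.
\end{equation*}

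Multiplying by the marginal $p(x)\propto\Delta(x)\det[f_{k-1}(x_j)]_{j,k=1}^{n}$ from \eqref{2.3w} cancels the two Vandermondes, and Andr\'eief's generalised Cauchy--Binet identity collapses the integration over $x_1,\ldots,x_n$ to a single determinant,
\begin{equation*}
p(y)\;\propto\;\Delta(y)\,\det\!\left[\int_0^{\infty}\frac{y_i^{\nu}\,e^{-y_i/x}}{x^{\nu+1}}\,f_{k-1}(x)\,dx\right]_{i,k=1}^{n}.
\end{equation*}
The substitution $t=y_i/x$ rewrites each entry as $\int_0^{\infty}t^{\nu-1}e^{-t}f_{k-1}(y_i/t)\,dt$, which is precisely $g_{k-1}(y_i)$ in the form stated in \eqref{2.3wB}, producing \eqref{2.3wA}.

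The main technical point is the Harish-Chandra--Itzykson--Zuber step: one must choose the correct rectangular-SVD measure so that the Jacobian produces the $\prod_j y_j^{\nu}$ factor responsible for the Laguerre-type weight, and carefully track the normalising constants coming from the $U$-integration and from the expansion of $\Delta(-1/\vec{x})$. Coincidences among the $x_j$ are not a genuine obstacle: the resulting formula is manifestly continuous in $(x_1,\ldots,x_n)$, so a limiting argument covers the nongeneric cases.
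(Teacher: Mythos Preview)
Your argument is correct and follows the standard route to results of this type: unitary invariance reduces to a diagonal $W$, the change of variables to singular values together with the HCIZ integral yields the conditional density, and Andr\'eief collapses the $x$-integration. The computation of $\Delta(-1/\vec{x})$, the tracking of the powers of $x_j$, and the final substitution $t=y_i/x$ all check out.

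There is nothing to compare against here, however: the paper does not prove this proposition at all. It is stated as a special case of Theorem~2.1 in Kuijlaars--Stivigny \cite{KS14} and quoted without proof, serving only as input to the derivation of the joint eigenvalue PDF \eqref{B}. Your sketch is essentially how \cite{KS14} proceeds (their argument is phrased slightly more abstractly, allowing a general weight in place of the Gaussian, but the HCIZ-plus-Andr\'eief mechanism is the same). So your proposal is a valid self-contained proof of a result the present paper simply imports.
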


\medskip
Let $Y$ be defined in \eqref{Y} and let  $a_1,\dots,a_N$ denote the eigenvalues of   $A^*A$.   Starting with (\ref{chGUEsourcepdf}), application of Proposition \ref{P5} $r$ times in succession shows that the joint eigenvalue PDF of $Y^* Y$  is equal to
 \begin{equation}   \frac{1}{Z_N}\det[\eta_i(x_j)]_{i,j=1}^{N}\det[\xi_i(x_j)]_{i,j=1}^{N}, \label{B}\ee
where $\eta_i(x)=x^{i-1}$ and $\xi_j(x)=\Phi(-a_j;x)$, while with $T=t_1 \cdots t_r$
 \be \label{T} \Phi(v;y)=\frac{1}{\Gamma(\nu_0+1)}\int_{0}^{\infty} d t_1 \cdots \int_{0}^{\infty} d t_r  \prod_{i=1}^{r}  t_{i}^{\nu_i-1}e^{-t_{i}}\,  (\frac{y}{T})^{\nu_0} e^{-\frac{y}{T}}   {}_0F_1(\nu_0+1;-v\frac{y}{T}), \end{equation}
 valid for $r \ge 1$ (for $r=0$ $\xi_j(x)$ is defined as below (\ref{chGUEsourcepdf})). Here $\Phi(v;y)$ is actually the same  as defined in  \eqref{function4}, for which   application of the Mellin transform
gives
  \be \int_{0}^{\infty} y^{s-1}\Phi(v;y) \, dy =\phi(v;s) \prod_{l=1}^{r} \Gamma(\nu_l+s),\label{mellin1}\ee
while   use of the inverse Mellin transform gives the sought expression.
We stress that when some of the $a_j$'s in \eqref{B} coincide L'Hospital's rule provides the appropriate eigenvalue density.

The significance of the structure (\ref{B}) is that it provides a systematic way to compute the corresponding $k$-point correlation function  $\rho_{(k)}(x_1,\dots,x_k)$, where the normalization has been chosen such that
integrating gives  $N(N-1)\cdots (N-k+1)$, see e.g. \cite[eqn (5.1)]{Fo10} for the definition.

\begin{prop}[{\cite[Prop.~2.2]{Bo98}}] \label{PB}
With $g_{i,j} :=  \int_0^\infty \eta_i(x) \xi_j(x) \, dx$, let $[g_{i,j}]_{i,j=1}^{n}$ be invertible for each
$n=1,2,\dots$. Defining $c_{i,j}$ by
\begin{equation}\label{7.eb1'}
\big([c_{i,j}]_{i,j=1}^{N}\big)^t =
\big ( [g_{i,j}]_{i,j=1}^{N} \big )^{-1},
\end{equation}
and setting
\begin{equation}\label{7.eb2'}
K_N(x,y) =  \sum_{i,j=1}^N c_{i,j} \eta_i(x) \xi_j(y),
\end{equation}
we have that the $k$-point correlation function is given by
\begin{equation}\label{7.eb2}
\rho_{(k)}(x_1,\dots,x_k) =
\det [ K_N(x_j,x_l) ]_{j,l=1}^{k}.
\end{equation}
\end{prop}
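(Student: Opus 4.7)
The plan is to reduce Proposition \ref{PB} to the well-known fact that any joint density of the form $\frac{1}{N!}\det[K(x_i,x_j)]_{i,j=1}^N$, with $K$ satisfying the two reproducing properties $\int_0^\infty K(x,x)\,dx = N$ and $\int_0^\infty K(x,y)K(y,z)\,dy = K(x,z)$, automatically has $k$-point correlation functions given by $\det[K(x_i,x_j)]_{i,j=1}^k$. The task then splits into (a) manipulating the biorthogonal structure into this form, and (b) verifying the two reproducing properties.

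First I would biorthogonalise. Since $G := [g_{i,j}]_{i,j=1}^N$ is invertible, set $\psi_i(x) := \sum_{k=1}^N c_{k,i}\,\eta_k(x)$; the transpose convention in (\ref{7.eb1'}) then gives at once
\[
\int_0^\infty \psi_i(x)\xi_j(x)\,dx = \sum_{k=1}^N c_{k,i}\,g_{k,j} = \delta_{i,j}.
\]
Because $\eta \mapsto \psi$ is a row change of basis with determinant $\det((c_{k,i})^t) = 1/\det G$, one has $\det[\eta_i(x_j)] = (\det G)\det[\psi_i(x_j)]$, and the Andr\'eief identity $\int\cdots\int \det[\eta_i(x_j)]\det[\xi_i(x_j)]\prod dx_j = N!\det G$ pins down the normalisation $Z_N = N!\det G$. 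The joint density therefore rewrites as $\frac{1}{N!}\det[\psi_i(x_j)]_{i,j=1}^N\det[\xi_i(x_j)]_{i,j=1}^N$.

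Next, the elementary matrix identity $\det[\psi_i(x_j)]\det[\xi_i(x_j)] = \det\bigl[\sum_{k=1}^N \psi_k(x_i)\xi_k(x_j)\bigr]_{i,j=1}^N$ (viewing the product as $\Psi^T\Xi$) converts the density into $\frac{1}{N!}\det[K_N(x_i,x_j)]_{i,j=1}^N$ with $K_N(x,y) := \sum_{k=1}^N \psi_k(x)\xi_k(y)$. Expanding $\psi_k$ and re-indexing shows this is exactly the sum (\ref{7.eb2'}) from the statement. The two reproducing identities then fall out immediately from biorthogonality: $\int K_N(x,x)\,dx = \sum_i \delta_{i,i} = N$, and $\int K_N(x,y)K_N(y,z)\,dy = \sum_{i,j}\psi_i(x)\xi_j(z)\delta_{i,j} = K_N(x,z)$. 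The standard cofactor / minor-expansion argument (see, e.g., \cite[Prop.~5.1.2]{Fo10}) then integrates out $N-k$ variables in $\det[K_N(x_i,x_j)]_{i,j=1}^N$ inductively, yielding the claimed formula (\ref{7.eb2}).

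The main obstacle is not conceptual but notational: one must be careful that the transpose appearing in the definition (\ref{7.eb1'}) of $c_{i,j}$ is consistent with the order of the sum $\sum_{i,j}c_{i,j}\eta_i(x)\xi_j(y)$, so that this double sum genuinely equals $\sum_k \psi_k(x)\xi_k(y)$; reversing the transpose would destroy the biorthogonality used in step three. Once this bookkeeping is settled, the rest is a clean application of Andr\'eief and the reproducing-kernel integration lemma, with no delicate analysis needed.
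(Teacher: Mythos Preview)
Your argument is correct and is essentially the standard proof of this result. Note, however, that the paper does not supply its own proof of Proposition~\ref{PB}: the proposition is quoted verbatim from Borodin \cite[Prop.~2.2]{Bo98} and used as a black box, so there is no in-paper proof to compare against. Your biorthogonalisation $\psi_i=\sum_k c_{k,i}\eta_k$, followed by the Andr\'eief identity to fix $Z_N=N!\det G$, the factorisation $\det[\psi_i(x_j)]\det[\xi_i(x_j)]=\det[K_N(x_i,x_j)]$, and the reproducing-kernel integration lemma, is exactly the route taken in Borodin's original paper and in standard references such as \cite[\S5.1]{Fo10}. Your caution about the transpose in (\ref{7.eb1'}) is well placed and you have handled the index bookkeeping correctly: $C^tG=I$ gives $\sum_k c_{k,i}g_{k,j}=\delta_{i,j}$, which is precisely the biorthogonality $\int\psi_i\xi_j=\delta_{i,j}$ you need.
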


We are now ready to complete the proof of Proposition \ref{pdf}.
\begin{proof}[Proof of Proposition \ref{pdf}] Our first task is to compute $g_{i,j} :=  \int_0^\infty \eta_i(x) \xi_j(x) \, dx$. For this purpose,
we require the fact (see e.g.~\cite[eqns.~(6.2.15), (6.2.33) and (4.5.2)]{AAR} ) that with
 $L_{n}^{\nu_0}(y)$ denoting the Laguerre polynomial of degree $n$,  one has the Hankel pair
\be L_{n}^{\nu_0}(y)=\frac{e^{y}}{n!\Gamma(\nu_0+1)} \int_{0}^{\infty} t^{\nu_0+n}e^{-t}{}_0F_1(\nu_0+1;-yt) \, dt \label{laguerreformula1}\ee
and \be t^n= \frac{n!e^{t}}{\Gamma(\nu_0+1)} \int_{0}^{\infty}  y^{\nu_0}L_{n}^{\nu_0}(y)e^{-y}{}_0F_1(\nu_0+1;-ty)\, dy. \label{laguerreformula2}\ee
Combination of \eqref{mellin1}, \eqref{function2} and \eqref{laguerreformula1} shows that
\be g_{i,j}=(i-1)!e^{a_j}L_{i-1}^{\nu_0}(-a_j)\prod_{l=1}^{r} \Gamma(\nu_l+i).\label{gL} \ee

According to Proposition \ref{PB}, we must now invert the matrix (\ref{gL}).
With $G=[g_{i,j}]_{i,j=1}^N$, let $C=(G^{-1})^{t}$, the entries $c_{i,j}$ of $C$ then satisfy
\be \label{bv} e^{a_k}\sum_{i=1}^{N}(i-1)!\, L_{i-1}^{\nu_0}(-a_k)\prod_{l=1}^{r} \Gamma(\nu_l+i) \, c_{i,j}=\delta_{j,k}.\ee
Without loss of generality we assume that $a_1, \ldots, a_N$ are pairwise distinct. In this case the above equations imply
 \be \sum_{i=1}^{N}(i-1)!\,L_{i-1}^{\nu_0}(u) \prod_{l=1}^{r} \Gamma(\nu_l+i)\, c_{i,j}=e^{-a_j}\prod_{l=1,l\neq j}^{N} \frac{-u-a_l}{a_j-a_l}, \label{csumidentity}\ee
as can be verified by noting that both sides are polynomials of degree $N-1$ in $u$ which are equal at $N$ different points since (\ref{bv}) is
satisfied. Using this implicit formula for $\{c_{i,j}\}$ we now want to show that (\ref{7.eb2'}) implies the double contour integral
formula (\ref{kernelCD}).

Using the  integral representation of the  reciprocal   Gamma function
  \begin{equation}\label{intrepgamma1}
    \frac{1}{\Gamma(z)} = \frac{1}{2\pi i} \int_{\gamma} w^{-z}e^{w} dw,
  \end{equation}
    we have from  \eqref{7.eb2'}  that
  \begin{align}&K_N(x,y) = \frac{1}{(2\pi i)^{r}}\sum_{i,j=1}^{N}   \xi_j(y)  \int_{\gamma_1}dw_1\cdots \int_{\gamma_r}dw_r \prod_{l=1}^r w_{l}^{-\nu_l-1}e^{w_l}\nonumber \\
  & \quad \times \big(\frac{x}{w_1\cdots w_r}\big)^{i-1} \prod_{l=1}^{r} \Gamma(\nu_l+i) \, c_{i,j} \nonumber
  \\
    &= \frac{1}{(2\pi i)^{r}}\sum_{j=1}^{N}   \xi_j(y)  \int_{\gamma_1}dw_1\cdots \int_{\gamma_r}dw_r \prod_{l=1}^r w_{l}^{-\nu_l-1}e^{w_l} \sum_{i=1}^{N} (i-1)! \prod_{l=1}^{r} \Gamma(\nu_l+i)\, c_{i,j}
   \nonumber\\
  &   \quad\times
  \frac{1}{\Gamma(\nu_0+1)}\,e^{\frac{x}{w_1\cdots w_r}} \int_{0}^{\infty} du \, u^{\nu_0} L_{i-1}^{\nu_0}(u)\, e^{-u}{}_0F_1\big(\nu_0+1;-\frac{ux}{w_1\cdots w_r}\big)   \nonumber\\
  &= \frac{1}{(2\pi i)^{r}}\sum_{j=1}^{N}   \xi_j(y)  \int_{\gamma_1}dw_1\cdots \int_{\gamma_r}dw_r \prod_{l=1}^r w_{l}^{-\nu_l-1}e^{w_l} \ e^{\frac{x}{w_1\cdots w_r}}
   \nonumber\\
  &   \quad\times
  \frac{1}{\Gamma(\nu_0+1)} \int_{0}^{\infty} du \, u^{\nu_0} e^{-u}{}_0F_1\big(\nu_0+1;-\frac{ux}{w_1\cdots w_r}\big)\,e^{-a_j}\prod_{l\neq j} \frac{-u-a_l}{a_j-a_l}. \label{2.23}
  \end{align}
  Here the formulae \eqref{laguerreformula2} and \eqref{csumidentity} have been made use of respectively in the second and third equalities.

  Finally, with \eqref{T} and \eqref{function3}    in mind, these facts substituted into (\ref{2.23}) imply that
\be
  K_N(x,y) = \int_0^\infty du \, u^{\nu_0} e^{-u} \Psi(u;x) \sum_{j=1}^N \Phi(-a_j;y)\, e^{-a_j} \prod_{l\ne j} {- u - a_j \over a_j - a_l}.
\ee
  We recognise the sum over $j$ as the sum of the residues at $\{a_l\}$ of
\be \Phi(-v;y)\frac{1}{-u-v}\prod_{l=1}^{N}\frac{-u-a_l}{v-a_l}\ee
  considered as a function of $v$.
  Applying the residue theorem and   changing $v$ to $-v$, we thus arrive at the desired result.
\end{proof}

\begin{remark} The case in which each $a_l = 0$ has been analysed previously \cite{AIK13, AKW13,KZ}, but using different working. Thus instead of computing the
inverse matrix (\ref{7.eb1'}), functions
$$
P_{j-1}(x) \in {\rm Span} \, \{ \eta_1(x),\dots, \eta_j(x) \}, \qquad
Q_{j-1}(x) \in {\rm Span} \, \{ \xi_1(x),\dots, \xi_j(x) \},
$$
with the biorthogonality property $\int_0^\infty P_k(x) Q_l(x) \, dx = \delta_{k,l}$ were constructed. In terms of these functions (\ref{7.eb2'}) simplifies from
a double sum to the single sum
\begin{equation}\label{SM}
K_N(x,y) = \sum_{j=1}^N P_{j-1}(x) Q_{j-1}(y).
\end{equation}
Instead of (\ref{kernelCD}) with each $a_l=0$ (which strictly speaking is ill-defined
due to the restriction on the contour $\mathcal C$,  but can be well understood in a limiting sense, cf.~Remark \ref{nullremark}) , this leads to the double integral formula
\begin{multline}\label{bq}
 K_N(x,y)  =  {1 \over (2 \pi i)^2} \int_{-1/2 - i \infty}^{-1/2 + i \infty} du  \oint_{\Sigma} dt \,
   \prod_{j=-1}^r { \Gamma(\nu_j + u+1 ) \over   \Gamma(\nu_j + t+1 ) }
   \\
   \times {\Gamma(t - N + 1) \over \Gamma(u - N + 1)} {x^t y^{-(u+1)} \over u - t},
   \end{multline}
where $\Sigma$ is a simple closed contour encircling anti-clockwise $t=0,1,\dots,N-1$ but not $u$; see \cite[Prop.~5.1]{KZ} for the detailed derivation, where similar integral representations for both multiple orthogonal functions $P_{j-1}$ and $Q_{j-1}$ are first derived and then the double integral follows from a particular combination.
\end{remark}

Next, we further  investigate Proposition \ref{pdf} and establish   a corollary      under the assumption that  all but a fixed number of source parameters are equal to $a$. Precisely, for $m\geq 0$ let $a_{m+1}=\cdots=a_N=a$. More definitions are also needed. For $k=1, 2, 3, \ldots$ and $n=0,1,2,\ldots$, set
\be  \mathcal{L}^{(k)}_{n}(x;a,a_1,\ldots,a_{k-1})= \int_{0}^{\infty} du   \, u^{\nu_0}e^{-u} \Psi(u;x) (u+a)^{n}\prod_{l=1}^{k-1} (u+a_l),\label{typeIIfunction}\ee
and \be  \mathcal{\widetilde{L}}^{(k)}_{n}(x;a,a_1,\ldots,a_k)=\frac{1}{2\pi i} \int_{\mathcal{C}} dv \,  e^{v} \Phi(v;x)(v+a)^{-n} \prod_{l=1}^{k}\frac{1}{v+a_l}, \label{typeIfunction}\ee
where $\mathcal{C}$ is a counterclockwise contour encircling $-a, -a_1,\ldots, -a_k$  but not any point on the positive real axis.
\begin{corollary}\label{pdfquasi} Let $K_N$ be the  kernel  \eqref{kernelCD}, and for $m \ge 0$
  let
\be  a_{m+1}= \cdots= a_N=a. \label{arelation}\ee  Then we have
\begin{align}
K_N(x,y)&=\frac{1}{2\pi i}\int_{0}^{\infty} du \int_{\mathcal{C}} dv \, u^{\nu_0}e^{-u+v} \Psi(u;x)\Phi(v;y)\frac{1}{u-v}\Big(\frac{u+a}{v+a}\Big)^{N-m}\nonumber\\
&\quad +\sum_{k=1}^m  \mathcal{L}^{(k)}_{N-m}(x;a,a_1,\ldots,a_{k-1})\, \mathcal{\widetilde{L}}^{(k)}_{N-m}(x;a,a_1,\ldots,a_k), \label{kernelCDquasi}\end{align}
where $\mathcal{C}$ is a counterclockwise contour encircling $-a$ but not $u$.
 \end{corollary}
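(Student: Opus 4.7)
The plan is to reduce (\ref{kernelCDquasi}) to a purely algebraic rearrangement of the factor $\frac{1}{u-v}\prod_{l=1}^N \frac{u+a_l}{v+a_l}$ appearing in (\ref{kernelCD}). Under the hypothesis (\ref{arelation}) this product factors as
\[
\frac{1}{u-v}\Big(\frac{u+a}{v+a}\Big)^{N-m}\prod_{l=1}^{m}\frac{u+a_l}{v+a_l}.
\]
The key observation is a telescoping identity for $P_k(u,v):=\prod_{l=1}^{k}\frac{u+a_l}{v+a_l}$, namely
\[
P_k-P_{k-1}=P_{k-1}\Big(\frac{u+a_k}{v+a_k}-1\Big)=\frac{u-v}{v+a_k}P_{k-1},
\]
which after summing $k=1,\dots,m$ and dividing by $u-v$ yields
\[
\frac{P_m(u,v)}{u-v}=\frac{1}{u-v}+\sum_{k=1}^{m}\frac{(u+a_1)\cdots(u+a_{k-1})}{(v+a_1)\cdots(v+a_k)}.
\]
This is the only nontrivial ingredient; everything else is bookkeeping with contours.

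Next I would substitute this identity into (\ref{kernelCD}) and treat the two pieces separately. For the ``$1/(u-v)$'' piece, the rational factor in $v$ is simply $(v+a)^{-(N-m)}$, so the $v$-integrand is analytic in $v$ at $-a_1,\dots,-a_m$, and Cauchy's theorem allows me to shrink the original contour $\mathcal{C}$ (which encircles $-a,-a_1,\dots,-a_m$) down to a small loop around $-a$ only. This immediately produces the first line on the right-hand side of (\ref{kernelCDquasi}). For the $k$th summand in the telescoping sum, the rational factor is $(u+a_1)\cdots(u+a_{k-1})(v+a_1)^{-1}\cdots(v+a_k)^{-1}$; since it has no dependence that entangles $u$ and $v$, Fubini decouples the double integral into a product of a $u$-integral and a $v$-integral.

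The $u$-integral, carrying the remaining $(u+a)^{N-m}$ factor from the square-bracket product, is exactly $\mathcal{L}^{(k)}_{N-m}(x;a,a_1,\dots,a_{k-1})$ defined in (\ref{typeIIfunction}), and the $v$-integral, carrying $(v+a)^{-(N-m)}$, is $\widetilde{\mathcal{L}}^{(k)}_{N-m}(y;a,a_1,\dots,a_k)$ from (\ref{typeIfunction}) after deforming $\mathcal{C}$ to encircle only the actual poles $-a,-a_1,\dots,-a_k$ of the integrand (no new singularities are crossed, since $\Phi(v;y)\,e^v$ is entire in $v$). Summing over $k$ reproduces the second line of (\ref{kernelCDquasi}).

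The one place where some care is required is the generic-$a_l$ assumption: if some $a_l$ coincide with $a$ or among themselves, the individual residues in the derivation above merge, but the telescoping identity and the resulting formula (\ref{kernelCDquasi}) are polynomial/rational identities in the parameters, so they extend by continuity (or equivalently by L'Hospital's rule, as remarked after (\ref{T})). I expect the only mild obstacle to be confirming that all the contour deformations above are legitimate, i.e.\ that in each step the integrand is holomorphic in the region swept through; this follows because $\Phi(v;y)e^v$ is entire in $v$ and the only singularities of the $v$-integrand are the rational poles made explicit in each step.
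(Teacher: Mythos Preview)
Your argument is correct and follows essentially the same route as the paper: the key step in both is the telescoping identity
\[
\frac{1}{u-v}\prod_{l=1}^{m}\frac{u+a_l}{v+a_l}=\frac{1}{u-v}+\sum_{k=1}^m \frac{\prod_{l=1}^{k-1}(u+a_l)}{\prod_{l=1}^{k}(v+a_l)},
\]
after which one substitutes into (\ref{kernelCD}) and reads off the definitions (\ref{typeIIfunction})--(\ref{typeIfunction}). Your treatment is in fact a bit more explicit than the paper's about why the contour $\mathcal{C}$ may be shrunk to encircle only the relevant poles in each piece.
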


  \begin{proof} We will use the identity
  \be \frac{1}{u-v}\prod_{l=1}^{m}\frac{u+a_l}{v+a_l}=\frac{1}{u-v}+\sum_{k=1}^m \frac{\prod_{l=1}^{k-1}(u+a_l)}{\prod_{l=1}^{k}(v+a_l)} \label{identicalrelation}\ee
 which has been proved by induction  in \cite{DF06}; see the equation (5.12) therein.  A direct   proof can be given as follows. Rewriting   $u-v=u+a_k-(v+a_k)$, we have
 \begin{align}(u-v)\sum_{k=1}^m \frac{\prod_{l=1}^{k-1}(u+a_l)}{\prod_{l=1}^{k}(v+a_l)}&=\sum_{k=1}^m \prod_{l=1}^{k}\frac{ u+a_l }{ v+a_l} -\sum_{k=1}^m \prod_{l=1}^{k-1}\frac{ u+a_l }{ v+a_l}= \prod_{l=1}^{m}\frac{ u+a_l }{ v+a_l} -1,
 \end{align}
 and \eqref{identicalrelation} follows.

 Recalling \eqref{arelation}, substituting \eqref{identicalrelation} in \eqref{kernelCD},   comparing the sought equation with \eqref{typeIIfunction} and \eqref{typeIfunction} this completes the proof. \end{proof}

\subsection{Average of characteristic polynomials} \label{sectionpolynomial}
Recall that a  biorthogonal ensemble  \cite{Bo98} refers to the joint density function
 \be \mathcal{Q}_{N}(x_1,\ldots,x_N)=\frac{1}{Z_N} \det[\eta_i(x_j)]_{i,j=1}^{N}\det[\xi_i(x_j)]_{i,j=1}^{N}\label{biopdf}, \ee
where all variables $x_1,\ldots, x_N$ are assumed to lie in the same interval $I\subseteq \mathbb{R}$ for simplicity.

For the special case $\eta_i=x^{i-1}$, the average ratio of characteristic polynomials under the density \eqref{biopdf} can be expressed in terms of the correlation kernel; thus as a minor variant
of \cite[Prop.~1]{DF08} we have the following.

\begin{prop} \label{averageratio}
With the same assumption and notation as in Proposition \ref{PB}, let    $\eta_j(x)=x^{j-1}$ for   $j=1, 2, \ldots$. Then,  for $z\in \mathbb{C}\backslash I$
\be \mathbb{E}\big[\prod_{l=1}^{N}\frac{x-x_l}{z-x_l}\big]=\int_I du \, \frac{x-u}{z-u}K_N(x,u).\ee
Equivalently, if for $x\in \mathbb{R}$   we define the residue \be {{\rm Res}_{z=x}} f(z)=\displaystyle\lim_{\varepsilon\rightarrow 0^+} \frac{1}{\pi}{ \rm Im} \, f(x-i\varepsilon), \ee then
\be K_N(x,y)=\frac{1}{x-y}{{\rm Res}_{z=x}\mathbb{E}\big[\prod_{l=1}^{N}\frac{x-x_l}{z-x_l}\big]}.\ee
\end{prop}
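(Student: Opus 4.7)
I would evaluate $E := \mathbb{E}[\prod_{l=1}^N (x-x_l)/(z-x_l)]$ by reducing it to a finite-dimensional determinant via Andr\'eief's identity, then simplifying by row operations that exploit the monomial structure $\eta_i(x)=x^{i-1}$. First, combine $\prod_l(x-x_l)$ with $\det[x_j^{i-1}]_{i,j=1}^N$ via the Vandermonde identity
\[
\prod_{l=1}^N(x-x_l)\det[x_j^{i-1}]_{i,j=1}^N = (-1)^N \det[y_j^{i-1}]_{i=1,\ldots,N+1;\,j=0,\ldots,N},\qquad y_0=x,\; y_l=x_l,
\]
and absorb $\prod_l 1/(z-x_l)$ into the columns of $\det[\xi_i(x_j)]$. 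A cofactor expansion of the augmented Vandermonde along its $y_0 = x$ column, followed by Andr\'eief applied to each resulting $N \times N$ minor, yields
\[
E = \frac{(-1)^N \det \tilde M}{\det G}, \qquad \tilde M = \bigl[\vec w(x) \bigm| H(z)\bigr],
\]
where $\vec w(x) = (1, x, \ldots, x^N)^T$ and $H_{i,j}(z) = \int u^{i-1}\xi_j(u)/(z-u)\,du$.

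Next, I apply the determinant-preserving row operations $R_i \mapsto R_i - z R_{i-1}$ for $i = 2, \ldots, N+1$. Because $u^{i-1} - zu^{i-2} = u^{i-2}(u-z)$ cancels the pole in $1/(z-u)$, the identity $H_{i,j}(z) - zH_{i-1,j}(z) = -g_{i-1,j}$ collapses the lower $N$ rows of $\tilde M$ to $-G$; the top row becomes $(1, q_1(z), \ldots, q_N(z))$ with $q_j(z) = \int \xi_j(u)/(z-u)\,du$, and the first column becomes $(1,\, x-z,\, (x-z)x,\, \ldots,\, (x-z)x^{N-1})^T$. Cofactor expansion along this first column isolates $(-1)^N \det G$ from the $(1,1)$ entry, and $(x-z)$ times the block determinant
\[
\det\!\begin{pmatrix} 0 & \vec q(z)^T \\ \vec w(x) & -G \end{pmatrix} = (-1)^N \det G \cdot \vec q(z)^T G^{-1} \vec w(x)
\]
(via the Schur complement). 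Collecting the various signs,
\[
E = 1 + (x-z)\, \vec q(z)^T G^{-1} \vec w(x).
\]

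For the final identification, the kernel formula $K_N(x, u) = \sum_{i,j} c_{i,j} x^{i-1} \xi_j(u)$ with $(c_{i,j}) = (G^{-1})^T$ gives $\vec q(z)^T G^{-1} \vec w(x) = \int K_N(x, u)/(z-u)\,du$, while biorthogonality $\sum_j c_{i,j} g_{1,j} = \delta_{1,i}$ yields $\int K_N(x, u)\,du = 1$. The algebraic identity $1 + (x-z)/(z-u) = (x-u)/(z-u)$ then combines these into the stated integral representation. The equivalent residue statement follows from Sokhotski--Plemelj: the distributional boundary value $\pi^{-1}\mathrm{Im}\,1/(y - i\varepsilon - u) \to \delta(y-u)$ concentrates the integral on the pole of the resolvent and extracts $(x-y)K_N(x, y)$, which recovers the kernel after division by $(x-y)$.

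\textbf{Main obstacle.} The principal technical hurdle is the careful bookkeeping in the first two steps: Andr\'eief must be applied after a cofactor expansion (since the Vandermonde augmentation produces one determinant of size $N+1$ and another of size $N$), and multiple $(-1)^N$ factors from the Vandermonde, the cofactor expansion, and the Schur complement must all be tracked through to the end. The rank-one telescoping produced by the second step is entirely attributable to the monomial hypothesis $\eta_i(x) = x^{i-1}$---it is precisely what makes $H_{i,j}(z) - zH_{i-1,j}(z)$ collapse to a single $-g_{i-1,j}$, so the argument does not extend directly to biorthogonal ensembles with a non-polynomial choice of $\eta_i$.
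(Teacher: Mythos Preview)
The paper does not supply its own proof of this proposition; it simply cites it as ``a minor variant of \cite[Prop.~1]{DF08}''.  Your argument is correct and is precisely the kind of bordered--determinant/Andr\'eief computation one expects for such a result, so in spirit it coincides with the cited reference: augment the Vandermonde by the extra node $x$, apply Andr\'eief columnwise, use the telescoping row operations $R_i\mapsto R_i-zR_{i-1}$ (which is exactly where the hypothesis $\eta_i(x)=x^{i-1}$ enters), and read off the rank-one correction via a Schur complement.  The identification $\int K_N(x,u)\,du=1$ from $\sum_j c_{i,j}g_{1,j}=\delta_{i,1}$ and the Sokhotski--Plemelj step for the residue formulation are both fine.

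One small notational slip to clean up: you introduce $\vec w(x)=(1,x,\dots,x^N)^T$ as an $(N{+}1)$-vector when building $\tilde M$, but in the block
\[
\det\!\begin{pmatrix} 0 & \vec q(z)^T \\ \vec w(x) & -G \end{pmatrix}
\]
the lower-left entry must be the $N$-vector $(1,x,\dots,x^{N-1})^T$ for the dimensions to match $-G$.  This is purely cosmetic and does not affect the argument.  Also note that the displayed residue formula in the paper reads $\mathrm{Res}_{z=x}$, whereas your Sokhotski--Plemelj computation (correctly) takes the boundary value at $z=y$; this appears to be a typographical issue in the statement rather than a flaw in your proof.
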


In the case of the average of a single characteristic polynomial or its reciprocal, alternative expressions are also available; cf.~Proposition 2 of \cite{DF08}.

\begin{prop}  \label{averageone}
With the same assumption and notation as in Proposition \ref{PB}, let    $\eta_j(x)=x^{j-1}$ for   $j=1, 2, \ldots$. Then,
\begin{align} \mathbb{E}\big[\prod_{l=1}^{N}\frac{1}{z-x_l}\big]&=\frac{N!}{Z_N}
        \begin{vmatrix}
                       g_{1,1} & g_{1,2} & \ldots & g_{1,N} \\
                       \vdots & \vdots & \ddots & \vdots \\
                       g_{N-1,1} & g_{N-1,2} & \ldots & g_{N-1,N} \\
                       \int_I du \, \frac{\xi_1(u)}{z-u} &  \int_I du  \, \frac{\xi_2(u)}{z-u} & \ldots &  \int_I du \, \frac{\xi_N(u)}{z-u} \\
                     \end{vmatrix} \label{multiple1}\\
                     &=\int_I du \, \frac{1}{z-u}\sum_{j=1}^N c_{N,j}\eta_j(u),  \label{multiple2}
                     \end{align}
                     for $z\in \mathbb{C}\backslash I$ and
                     \begin{align} \mathbb{E}\big[\prod_{l=1}^{N}(x-x_l)\big]&=\frac{N!}{Z_N}
        \begin{vmatrix}
                       g_{1,1} &  \ldots & g_{1,N} & \eta_{1}(x) \\
                        g_{2,1} &  \ldots & g_{2,N} & \eta_{2}(x)\\
                       \vdots & \vdots & \vdots & \vdots \\
                       g_{N+1,1}  & \ldots &  g_{N+1,N} & \eta_{N+1}(x)  \label{multiple3}\\
                                            \end{vmatrix} \\
                     &= \frac{1}{\tilde{c}_{N+1,N+1}}\sum_{j=1}^{N+1} \tilde{c}_{j,N+1} x^{j-1}, \label{multiple4}
                     \end{align}
                     where the normalization $Z_N=N! \det[g_{i,j} ]_{i,j=1}^{N}$,  and $\tilde{c}_{j,N+1}$  is the $(N+1,j)$ entry of the inverse of $[g_{i,j}]_{i,j=1}^{N+1}$.
\end{prop}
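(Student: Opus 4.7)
The proof naturally splits into two parallel tracks, one for each pair of formulas; both tracks proceed by combining Andre\'ief's identity with a cofactor expansion.

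For \eqref{multiple1}--\eqref{multiple2}, I would first absorb the symmetric factor $\prod_{l=1}^N(z-x_l)^{-1}$ into one of the two determinants in \eqref{biopdf} as a product of column factors, then apply Andre\'ief's identity to obtain
\[
\mathbb{E}\!\left[\prod_{l=1}^N \frac{1}{z-x_l}\right] = \frac{N!}{Z_N}\,\det\!\left[\int_I \frac{\eta_i(x)\,\xi_j(x)}{z-x}\,dx\right]_{i,j=1}^N.
\]
The key algebraic step is the partial-fraction identity
\[
\frac{x^{i-1}}{z-x} = \frac{z^{i-1}}{z-x} \,-\, \sum_{k=0}^{i-2} z^{i-2-k}\, x^{k},
\]
which rewrites row $i$ of the matrix as $z^{i-1}\mathbf{h}(z) - \sum_{m=1}^{i-1} z^{i-1-m}\mathbf{g}_m$, where $\mathbf{h}(z)_j := \int_I \xi_j(u)/(z-u)\,du$ and $\mathbf{g}_m := (g_{m,j})_{j=1}^N$. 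The cascading row operations $R_i \to R_i - zR_{i-1}$ for $i=N, N-1, \ldots, 2$ (each using the original $R_{i-1}$, which preserves the determinant) collapse the matrix to one with first row $\mathbf{h}(z)$ and subsequent rows $-\mathbf{g}_1, \ldots, -\mathbf{g}_{N-1}$; a cyclic permutation of rows then produces exactly \eqref{multiple1}, with the $(-1)^{N-1}$ from the overall minus signs on the $\mathbf{g}_m$ cancelling the sign of the permutation.

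For \eqref{multiple2}, I would expand \eqref{multiple1} along its bottom row, recognise the $(N,j)$-minor of the bordered matrix as the $(N,j)$-minor $M^G_{N,j}$ of $G := [g_{i,k}]_{i,k=1}^N$, and apply the adjugate formula together with the normalisation $Z_N = N!\det G$ and the definition $c_{N,j} = (G^{-1})_{j,N}$, which combine to give the identification $\frac{N!}{Z_N}(-1)^{N+j}M^G_{N,j} = c_{N,j}$.

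For \eqref{multiple3}--\eqref{multiple4}, the plan is to absorb $\prod_l(x-x_l)$ into an enlarged Vandermonde determinant. Since $\eta_i(x)=x^{i-1}$, setting $x_{N+1}=x$ yields
\[
\det[x_j^{i-1}]_{i,j=1}^{N+1} = \prod_{l=1}^N(x-x_l)\cdot \det[x_j^{i-1}]_{i,j=1}^N,
\]
so the expectation becomes $Z_N^{-1}$ times the integral over $I^N$ of $\det[\eta_i(x_j)]_{i,j=1}^{N+1}\det[\xi_i(x_j)]_{i,j=1}^N$. Expanding the $(N+1)\times(N+1)$ determinant along its last column (whose entries are $\eta_i(x)$) and applying Andre\'ief's identity to each resulting $N$-fold integral produces $N!$ times the $(i,N+1)$-minor of the bordered matrix displayed in \eqref{multiple3}; re-assembling this cofactor expansion gives \eqref{multiple3} directly.

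For \eqref{multiple4}, carry out the same cofactor expansion of the bordered determinant in \eqref{multiple3} along its last column: the $(i,N+1)$-cofactor equals $(-1)^{i+N+1}\tilde{M}^g_{i,N+1}$, where $\tilde{M}^g_{i,N+1}$ is the corresponding minor of $\tilde{G} := [g_{i,j}]_{i,j=1}^{N+1}$, and Cramer's rule gives $\tilde{c}_{i,N+1} = (-1)^{i+N+1}\tilde{M}^g_{i,N+1}/\det\tilde{G}$. The specialisation $i = N+1$ yields $\tilde{c}_{N+1,N+1} = \det G/\det\tilde{G} = Z_N/(N!\det\tilde{G})$, so dividing the cofactor expansion of \eqref{multiple3} through by $\tilde{c}_{N+1,N+1}$ produces exactly the ratio $\sum_{i=1}^{N+1}\tilde{c}_{i,N+1}\,x^{i-1}/\tilde{c}_{N+1,N+1}$ of \eqref{multiple4}. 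The main obstacle throughout is purely the bookkeeping of signs arising from the cascading row reductions, cofactor minors, row permutations, and the transpose in the definition $C = (G^{-1})^t$; the analytic content reduces to Andre\'ief's identity and the partial-fraction identity above.
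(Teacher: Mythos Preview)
Your argument is correct. The paper itself does not reprove \eqref{multiple1} and \eqref{multiple3} but simply cites Proposition~2 of \cite{DF08}; your Andre\'ief-plus-partial-fraction row reduction for \eqref{multiple1} and the Vandermonde column extension for \eqref{multiple3} supply self-contained derivations of precisely those cited results. For \eqref{multiple2} and \eqref{multiple4} the paper argues slightly differently: it right-multiplies the bordered matrix in \eqref{multiple1} by $C^t$ (using $GC^t=I_N$ and $N!/Z_N=\det C^t$) to collapse the determinant to a single entry, and for \eqref{multiple4} it invokes the $(N{+}1)$-level normalisation $N!/Z_N = \frac{Z_{N+1}}{(N+1)Z_N}\det\tilde{C}^t$ together with the observation that the expected product is monic to fix the overall constant. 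Your cofactor/adjugate route is an equivalent and more explicit execution of the same linear algebra; what it buys is transparency in the sign bookkeeping, whereas the paper's matrix-multiplication version hides the signs inside $\det C^t$.

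One point worth flagging: your bottom-row expansion of \eqref{multiple1} produces $\int_I (z-u)^{-1}\sum_j c_{N,j}\,\xi_j(u)\,du$, whereas \eqref{multiple2} as printed has $\eta_j(u)$. This is evidently a typo in the statement; the paper's own application of \eqref{multiple2} in the proof of Proposition~\ref{averageproduct}(ii) substitutes $\Phi(-a_j;u)$, which by the definitions below \eqref{B} is $\xi_j(u)$, not $\eta_j(u)$.
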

\begin{proof} The formulas \eqref{multiple1} and \eqref{multiple3} have been proved in Proposition 2 of \cite{DF08}. After noting the facts $[g_{i,j}]_N [c_{i,j}]^{t}_N=I_N$ and $N!/Z_N=\det[c_{i,j}]^{t}_N$, building on \eqref{multiple1} simple manipulation gives  \eqref{multiple2}. On the other hand, by $[\tilde{c}_{i,j}]^{t}_{N+1}[g_{i,j}]_{N+1} =I_{N+1}$ and
\be \frac{N!}{Z_N}=\frac{Z_{N+1}}{(N+1)Z_N} \frac{(N+1)!}{Z_{N+1}}=\frac{Z_{N+1}}{(N+1)Z_N} \det[\tilde{c}_{i,j}]^{t}_{N+1},\ee
we have from \eqref{multiple3} that
\be \mathbb{E}\big[\prod_{l=1}^{N}(x-x_l)\big]=\frac{Z_{N+1}}{(N+1)Z_N}\sum_{j=1}^{N+1} \tilde{c}_{j,N+1} x^{j-1}, \ee which further implies the sought equation  \eqref{multiple4} since it is a monic  polynomial.  \end{proof}

Application of the previous two propositions  gives us explicit evaluation of averages of characteristic polynomials for the   product of random matrices (\ref{Y}).
\begin{prop} \label{averageproduct} For the eigenvalue PDF (\ref{B}), the following hold true.

 (i) Let $K_N$ be the  kernel  \eqref{kernelCD}, then for $z\in \mathbb{C}\backslash\mathbb{R}$,
\be \mathbb{E}\big[\prod_{l=1}^{N}\frac{x-x_l}{z-x_l}\big]=\int_0^{\infty} du \frac{x-u}{z-u}K_N(x,u).\ee

(ii) Let $\Phi$ be given by \eqref{function4}, then for $z\in \mathbb{C}\backslash\mathbb{R}$,
\be \label{NE1} \mathbb{E}\big[\prod_{l=1}^{N}\frac{1}{z-x_l}\big]=\frac{1}{2\pi i}\frac{(-1)^{N-1}}{\prod_{l=1}^r\Gamma(\nu_l+N)}\int_0^{\infty} dt \int_{\mathcal{C}} dv \, e^v \Phi(v;t)\frac{1}{z-t}\prod_{l=1}^{N}\frac{1}{v+a_l}\ee
where $\mathcal{C}$ is a counterclockwise contour encircling $-a_1,\ldots, -a_{N}$ but not any point on the positive real axis.

(iii) Let $\Psi$ be given by \eqref{function3}, then
\be \label{NE2} \mathbb{E}\big[\prod_{l=1}^{N}(x-x_l)\big]=(-1)^{N}\prod_{l=1}^r\Gamma(\nu_l+N+1) \int_0^{\infty} du \, u^{\nu_0} e^{-u}\, \Psi(u;x) \prod_{l=1}^{N}(u+a_l).\ee
\end{prop}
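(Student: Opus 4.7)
The plan is to treat the three parts by specialising the biorthogonal-ensemble Propositions~\ref{averageratio} and~\ref{averageone} to the setting of (\ref{B}), using the explicit form $g_{i,j}=(i-1)!\,e^{a_j}L^{\nu_0}_{i-1}(-a_j)\prod_{l=1}^r\Gamma(\nu_l+i)$ and the Lagrange-interpolation identity (\ref{csumidentity}) already recorded in the proof of Proposition~\ref{pdf}. Part (i) is immediate: (\ref{B}) has the biorthogonal form (\ref{biopdf}) with $\eta_j(x)=x^{j-1}$ on $I=(0,\infty)$, so Proposition~\ref{averageratio} applied with the kernel from Proposition~\ref{pdf} gives the claim at once.

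For (ii) I would start from Proposition~\ref{averageone}(\ref{multiple1}), expanding the determinant along its last row to get $\sum_{j=1}^{N}c_{N,j}\int_0^\infty \xi_j(u)/(z-u)\,du$; the coefficient $c_{N,j}$ appears via Cramer's rule together with $Z_N=N!\det G$. Equating the coefficients of $u^{N-1}$ on the two sides of (\ref{csumidentity}) gives
\[
c_{N,j}=\frac{e^{-a_j}}{\prod_l\Gamma(\nu_l+N)\,\prod_{l\ne j}(a_j-a_l)}.
\]
Recognising the resulting $j$-sum as $(-1)^{N-1}$ times the sum of residues at $v=-a_1,\dots,-a_N$ of $e^{v}\Phi(v;t) / \prod_{l=1}^{N}(v+a_l)$, the residue theorem packages the answer as (\ref{NE1}).

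For (iii) my approach is via the standard characterisation of the expected characteristic polynomial: for the biorthogonal density (\ref{biopdf}) with $\eta_j(x)=x^{j-1}$, $\mathbb{E}[\prod_l(x-x_l)]$ is the unique monic polynomial of degree $N$ orthogonal on $(0,\infty)$ to each $\xi_k$ — an immediate consequence of (\ref{multiple3}), since pairing the last column of that determinant against $\xi_k$ produces two identical columns. I would then verify that the RHS of (\ref{NE2}) meets both criteria. Inserting (\ref{function3}) into $\int_0^\infty u^{\nu_0+n}e^{-u}\Psi(u;y)\,du$ and invoking (\ref{laguerreformula1}) yields the polynomial $n!\sum_{m=0}^{n}\alpha^{(n)}_m\,y^m\prod_{l=1}^r\Gamma(\nu_l+m+1)^{-1}$ in $y$ of degree $n$ (with $\alpha^{(n)}_m$ the Taylor coefficients of $L_n^{\nu_0}$); its leading coefficient is $(-1)^n\prod_l\Gamma(\nu_l+n+1)^{-1}$, so combined with the monic polynomial $\prod_l(u+a_l)$ and the prefactor $(-1)^N\prod_l\Gamma(\nu_l+N+1)$ the RHS is monic of degree $N$ in $x$.

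The orthogonality to $\xi_k=\Phi(-a_k;\cdot)$ is the step I expect to be the main technical obstacle. Using $\int_0^\infty y^m\Phi(-a_k;y)\,dy=g_{m+1,k}$ (from (\ref{mellin1}) and (\ref{laguerreformula1})), after cancellation of the $\Gamma$-factors the claim reduces to the Laguerre identity
\[
\sum_{m=0}^{n}m!\,\alpha^{(n)}_m L_m^{\nu_0}(-a_k)=\frac{(-a_k)^n}{n!},
\]
which I would establish by applying the coefficient-extraction substitution $w=z/(1+z)$ to the generating function $\sum_m L_m^{\nu_0}(y)\,t^m=(1-t)^{-\nu_0-1}e^{-yt/(1-t)}$. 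Summing over $n$ against the coefficients of $\prod_l(u+a_l)$ then collapses the orthogonality to $\prod_l(-a_k+a_l)=0$, which holds since this product contains the vanishing factor $-a_k+a_k$.
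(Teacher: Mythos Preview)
Your proposal is correct in all three parts. Parts (i) and (ii) match the paper's proof essentially line for line: the paper extracts $c_{N,j}$ from (\ref{csumidentity}) by dividing by $(-u)^{N-1}$ and letting $u\to\infty$, which is your ``equate leading coefficients'' step, and then invokes the residue theorem just as you do.

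Part (iii), however, goes a genuinely different route. The paper introduces an auxiliary parameter $a_{N+1}$ and works through the formula (\ref{multiple4}): it computes $\tilde c_{N+1,N+1}$ as a ratio of determinants, then evaluates $\sum_{j=1}^{N+1}\tilde c_{j,N+1}x^{j-1}$ by rerunning the machinery of (\ref{2.23}) with $N$ replaced by $N+1$, obtaining the integral on the right of (\ref{NE2}) directly. Your approach instead exploits the characterisation of $P_N$ as the unique monic degree-$N$ polynomial orthogonal to $\xi_1,\dots,\xi_N$, and verifies both properties for the right-hand side of (\ref{NE2}); the orthogonality check reduces, after the $\Gamma$-cancellation you describe, to the classical Laguerre inversion $y^n=n!\sum_{m=0}^{n}(-1)^m\binom{n+\nu_0}{n-m}L_m^{\nu_0}(y)$, and then the vanishing of $\prod_l(-a_k+a_l)$ kills the pairing. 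The paper's argument has the advantage of reusing computations already done for the kernel and avoiding a separate special-function identity; your argument is a bit more self-contained and makes transparent \emph{why} the formula holds (it is the biorthogonal $P_N$), at the cost of one extra Laguerre identity to justify.
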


\begin{proof}   It is immediate that Proposition  \ref{averageratio} implies (i). For (ii),  noting that the leading term of the Laguerre polynomial is
\be n!L_{n}^{\nu_0}(x)=(-x)^n+\cdots,\ee
  dividing by $(-u)^{N-1}$ and   taking the limit $u \to \infty$ in (\ref{csumidentity})  we see  that  \be c_{N,j}=\prod_{l=1}^r\frac{1} {\Gamma(\nu_l+N)}\, e^{-a_j}\prod_{l=1,l\neq j}^{N} \frac{1}{a_j-a_l}.\ee
Substituting $c_{N,j}$ in \eqref{multiple2} and noting $\eta_j(u)=\Phi(-a_j;u)$, we obtain (\ref{NE1}).

  For (iii), we first  introduce an auxiliary variable $a_{N+1}$  and set $\eta_{N+1}(u)=\Phi(-a_{N+1};u)$. The fact that $([g_{i,j}]_{N+1})^t [\tilde{c}_{i,j}]_{N+1}=I_{N+1}$ implies
  \be \tilde{c}_{N+1,N+1}=\frac{\det[g_{i,j}]_{N}}{\det[g_{i,j}]_{N+1}}=\prod_{l=1}^r\frac{1} {\Gamma(\nu_l+N+1)}\, e^{-a_{N+1}}\prod_{l=1}^{N} \frac{1}{a_{N+1}-a_l}. \label{cnn1}\ee
Changing $N$ to $N+1$ and using \eqref{csumidentity}, as   derived  in   \eqref{2.23} we obtain
\begin{align} \sum_{j=1}^{N+1} \tilde{c}_{j,N+1} x^{j-1}=\int_0^{\infty} du  u^{\nu_0} e^{-u}\, \Psi(u;x) e^{-a_{N+1}}\prod_{l=1}^{N}\frac{-u-a_l}{a_{N+1}-a_l}.\label{cn1sum}\end{align}
Combination of \eqref{cnn1}, \eqref{cn1sum} and \eqref{multiple4} completes the proof of (iii).
  \end{proof}

  Again, for  the eigenvalue PDF (\ref{B}), let
\be Q_{N-1}(x)=\textrm{Res}_{z=x}\mathbb{E}\big[\prod_{l=1}^{N}\frac{1}{z-x_l}\big], \qquad x\in (0,\infty),\ee
then use of Proposition \ref{averageproduct} (ii)  shows
 \be   Q_{N-1}(x)=  \frac{1}{2\pi i}\frac{(-1)^{N-1}}{\prod_{l=1}^r\Gamma(\nu_l+N)} \int_{\mathcal{C}} dv \, e^v \Phi(v;x) \prod_{l=1}^{N}\frac{1}{v+a_l};\ee
 when  $a_1, \ldots, a_N$ are pairwise distinct it is a special case of  Proposition 2   \cite{DF08}. Also, let
  \be \label{PN}
P_N(x)=\mathbb{E}\big[\prod_{l=1}^{N}(x-x_l)\big],
\ee
  combining Corollary \ref{pdfquasi} where $m$ is taken to be zero and Proposition \ref{averageproduct},   the correlation kernel $K_N$ given by  \eqref{kernelCD} can be expressed as the single sum (\ref{SM}) in terms of $P_j(x)$ and $Q_j(x)$. Here, without loss of generality, it is assumed that $P_j(x)$  corresponds to the   multi-parameters   $a_1, \ldots, a_j$ while $Q_j(x)$ corresponds to $a_1, \ldots, a_{j+1}$.

  \begin{remark}\label{R2.2}

In the special case $r=0$, use of (\ref{E0a}) shows that   (\ref{NE2}) reduces to
$$
P_N(x) = {(-1)^N e^{x} \over \Gamma(\nu_0 + 1) }
\int_0^\infty u^{\nu_0} e^{-u} \, {}_0 F_1 ( \nu_0 + 1; - xu) \prod_{l=1}^N ( u + a_l) \, du.
$$
This same expression has been derived using combinatorial means in \cite{Fo13}, and as the solution
of a partial differential equation in \cite{BNW14}. Furthermore, in this  case   $Q_{N-1}(x)$ and $P_N(x)$ are so-called multiple functions of type I and II respectively, and (\ref{SM}) reduces to Corollary 7 in \cite{DF08}; see \cite{DF08} or \cite{Ku10} for more details, especially when the parameters $a_j$'s coalesce into $D$ different values.  For the case of $a_1=\cdots=a_N=0$ and general $r$,   $Q_{N-1}(x)$ and $P_N(x)$ are also multiple functions of type I and II associated with $r+1$ weights; see \cite{KZ}. However, in the general case it remains as a challenge to  identify a multiple orthogonal functions structure.
\end{remark}

\section{Hard edge limits}\label{sectionhardlimit}
In this section we choose the source $A$ such that all but possibly a fixed number $m$ of the eigenvalues of $A^*A$ are equal to $bN$.
Three regimes are distinguished: subcritical regime $0<b<1$, critical regime $b=1$ and supercritical regime $b>1$; as to the former two regimes, see \cite{KMW09} and \cite{KFW11} for a relevant discussion on non-intersecting Bessel paths which corresponds to the  case $r=0$.
 In the present  paper we focus on the scaled hard edge limits   in the three regimes and leave the bulk and soft-edge  limits to a future work; for the case $a_1=\cdots=a_N=0$ the latter two limits have been established in \cite{LWZ14}. The critical kernel results from a double scaling limit, and its functional form is our main result as stated in Section \ref{sectionintroduction}.
 As $b$ increases from zero,  we will  describe a phase transition from the Meijer G-kernel  (\ref{r})  to the critical kernel (cf. Theorem \ref{criticalkernel}), then to the shifted mean LUE   kernel \eqref{kernelCD} (cf. Theorem \ref{supercriticalkernel} for the case $r=0$).

\subsection{Limiting kernels} \label{limitkernel}
We first suppose that  $0<b<1$. The hard edge scaling in this parameter range is in fact independent of $b$, and the hard edge correlation kernel (\ref{r}) already
known for the case $b=0$  is reclaimed.

\begin{theorem}[Subcritical regime]\label{noncriticalkernel}  With   the   kernel  \eqref{kernelCD},   let
\be a_1= \cdots= a_N=b N.
 \label{confluentsource}\ee
  Then for $0<b<1$,  we have
\be
 \lim_{N\rightarrow \infty}\frac{1}{(1-b)N}K_N\Big(\frac{\xi}{(1-b)N},\frac{\eta}{(1-b)N}\Big)= K^{{\rm h}, r}(\xi, \eta), \label{equationsub}
 \ee
 where $K^{{\rm h}, r}$ is given by (\ref{r}), valid uniformly for $\xi, \eta$ in any compact set  of $(0,\infty)$.
 \end{theorem}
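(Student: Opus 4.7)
Starting from the double-contour representation \eqref{kernelCD} with $a_1=\cdots=a_N=bN$, my plan is to rescale $u=(1-b)Ns$, $v=(1-b)Nt$ together with $x=\xi/((1-b)N)$, $y=\eta/((1-b)N)$. The combined exponential--polynomial factor collapses cleanly to
\begin{equation*}
e^{-u+v}\Bigl(\tfrac{u+bN}{v+bN}\Bigr)^{\!N}=e^{N(\alpha(s)-\alpha(t))}, \qquad \alpha(s):=-(1-b)s+\log\bigl((1-b)s+b\bigr),
\end{equation*}
and $\alpha$ has its unique real saddle at $s=t=1$, with $\alpha'(1)=0$ and $\alpha''(1)=-(1-b)^2<0$. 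In the rescaled variable, the $v$-contour becomes a small loop around the logarithmic singularity $t=-b/(1-b)$.

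Next I would extract the scaling limits of the auxiliary functions from their integral representations. From \eqref{function3}, the fact that $ux=s\xi$ remains order one while $x/(w_1\cdots w_r)\to 0$ makes $e^{x/W}\to 1$, so $\Psi((1-b)Ns;\xi/((1-b)N))$ converges pointwise to an $r$-fold $w$-integral identifiable via \eqref{Gfunction} with $G^{1,0}_{0,r+2}(\ldots\mid s\xi)$ having lower parameters $0,-\nu_0,\ldots,-\nu_r$; an analogous reduction of \eqref{T}, in which $e^{-y/T}\to 1$, yields
\begin{equation*}
((1-b)N)^{\nu_0}\,t^{\nu_0}\,\Phi\bigl((1-b)Nt;\,\eta/((1-b)N)\bigr)\longrightarrow G^{r+1,0}_{0,r+2}(\ldots\mid t\eta)
\end{equation*}
with lower parameters $\nu_0,\ldots,\nu_r,0$. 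Collecting the prefactor $((1-b)N)^{\nu_0+1}$ from $u^{\nu_0}\,du\,dv/(u-v)$, the $((1-b)N)^{-\nu_0}$ from the $\Phi$-scaling, and the outer $1/((1-b)N)$, the rescaled kernel takes the form
\begin{equation*}
\frac{K_N(x,y)}{(1-b)N}\;\sim\;\frac{1}{2\pi i}\int_0^\infty\!ds\int_{\tilde{\mathcal C}}\!dt\,\Bigl(\tfrac{s}{t}\Bigr)^{\!\nu_0}\!\frac{e^{N(\alpha(s)-\alpha(t))}}{s-t}\,G^{1,0}(s\xi)\,G^{r+1,0}(t\eta).
\end{equation*}

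The crux is then a contour deformation of $\tilde{\mathcal C}$ onto the vertical line $\Gamma:=\{1+iy:y\in\mathbb R\}$, closed on the left by a large semicircle that still encloses $-b/(1-b)$. Since $\operatorname{Re}\alpha(1+iy)=(b-1)-(1-b)^2y^2/2+O(y^4)$, the line $\Gamma$ is a steepest-descent path for $e^{-N\alpha(t)}$, while the closing arc contributes negligibly because $\operatorname{Re}(-\alpha(t))\to-\infty$ as $\operatorname{Re}(t)\to-\infty$. In deforming $\tilde{\mathcal C}$ to $\Gamma$ the simple pole at $t=s$ is crossed precisely for $s\in(-b/(1-b),1)\cap(0,\infty)=(0,1)$; the residue at $t=s$ equals $-G^{1,0}(s\xi)G^{r+1,0}(s\eta)$ (using $\operatorname{Res}_{t=s}[1/(s-t)]=-1$), and when combined with the $(2\pi i)^{-1}$ prefactor and the $-2\pi i$ from the contour-deformation identity it produces exactly
\begin{equation*}
\int_0^1 G^{1,0}_{0,r+2}(\ldots\mid s\xi)\,G^{r+1,0}_{0,r+2}(\ldots\mid s\eta)\,ds = K^{{\rm h},r}(\xi,\eta),
\end{equation*}
the desired limit. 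The leftover integral along $\Gamma$ is $O(N^{-1})$ by a joint Laplace/saddle argument: the $s$-integral concentrates on an $N^{-1/2}$-window at $s=1$ with peak $e^{N\alpha(1)}$, the $t$-integral on $\Gamma$ concentrates on an $N^{-1/2}$-window at $t=1$ with peak $e^{-N\alpha(1)}$, the peaks cancel, and the two Gaussian widths combine to $N^{-1}$.

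The main obstacle I anticipate is uniform control: one must verify the Meijer-G limits for $\Psi$ and $\Phi$ hold uniformly on compacta in $(s,\xi)$ and $(t,\eta)$, establish adequate decay of these limits along the tails of $\Gamma$ (for which the Barnes-integral form in \eqref{Gfunction} needs careful bounding), and supply a dominating integrable majorant to justify exchange of limit and integral both in the residue identification and in the $O(N^{-1})$ remainder estimate, with all bounds uniform for $\xi,\eta$ in compact subsets of $(0,\infty)$.
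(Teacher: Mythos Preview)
Your approach is essentially the same as the paper's: rescale, identify the saddle, deform the $v$-contour so as to pick up the residue at $v=u$ on the interval $(0,\text{saddle})$ (which produces exactly $K^{{\rm h},r}$ after the Meijer-$G$ limits), and estimate the remaining contour integral by steepest descent. The paper uses the circle $\{|z+b|=1\}$ for the remainder where you use the vertical line through the saddle plus a closing arc, but this is cosmetic.

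Two points deserve tightening. First, the reduction $((1-b)N)^{\nu_0}t^{\nu_0}\Phi\bigl((1-b)Nt;\eta/((1-b)N)\bigr)\to G^{r+1,0}_{0,r+2}(\cdot\,|\,t\eta)$ relies on the ${}_1F_1$ asymptotic ${}_1F_1(\nu_0+s;\nu_0+1;-Nv)\sim\Gamma(\nu_0+1)(Nv)^{-\nu_0-s}/\Gamma(1-s)$, which holds only for $\operatorname{Re}v>0$; for $\operatorname{Re}v<0$ one has instead ${}_1F_1\sim\Gamma(\nu_0+1)(-Nv)^{s-1}e^{-Nv}/\Gamma(\nu_0+s)$, bringing an extra $e^{-Nv}$ factor. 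Since your original contour $\tilde{\mathcal C}$ and the closing semicircle lie in $\operatorname{Re}t<0$, you cannot replace $\Phi$ by the Meijer $G$-function there. The fix is to deform \emph{first} with the exact $\Phi$ (then the semicircle decay follows from $e^{-N\alpha(t)}\cdot e^{-(1-b)Nt}=((1-b)t+b)^{-N}$), and only afterwards take the Meijer-$G$ limit on the residue part and on $\Gamma$, where $\operatorname{Re}t=1>0$. The paper instead keeps a closed circle and splits it into $\operatorname{Re}v>0$ and $\operatorname{Re}v<0$ arcs, treating the latter with the second ${}_1F_1$ asymptotic to get exponential decay.

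Second, your remainder estimate along $\Gamma$ should be $O(N^{-1/2})$, not $O(N^{-1})$: after the change $s=1+\sigma/\sqrt N$, $t=1+i\tau/\sqrt N$ the measure gives $N^{-1}$ but the factor $1/(s-t)=\sqrt N/(\sigma-i\tau)$ costs $\sqrt N$. The paper obtains the same $O(N^{-1/2})$ and handles the apparent singularity at $s=t$ via a principal-value interpretation.
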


 \begin{proof}  Introducing rescaled variables   $x= \xi/((1-b)N), \,  y= \eta/((1-b)N)$ in \eqref{kernelCD}
 and substituting $u, v$  by  $uN, vN$ respectively, we obtain
 \begin{multline}   \frac{1}{(1-b)N}K_N\Big(\frac{\xi}{(1-b)N},\frac{\eta}{(1-b)N}\Big)=\int_{0}^{\infty}du \int_{\mathcal{C}} dv
  \\  \times \frac{1}{2(1-b)\pi i}
     \frac{e^{-N(f(u)-f(v))}}{u-v}
   (Nu)^{\nu_0} \Psi\big(Nu; \frac{\xi}{(1-b)N}\big) \Phi\big(Nv; \frac{\eta}{(1-b)N}\big),
    \label{rescalingkernel}\end{multline}
     where
     \be f(z)=z-\log(b+z). \ee
Although both the functions $\Psi$ and $\Phi$ in the integrand  of \eqref{rescalingkernel} depend on $N$, we will see that for the large $N$ they do not enter the saddle point equation (e.g., cf. \eqref{phiasymtotics} and \eqref{psiasymtotics} below).   So we may perform saddle-point approximations and this is what we will do next.

     Consider now the exponent on the RHS of \eqref{rescalingkernel}. Since \be f'(z)=1-\frac{1}{b+z}, \ee
     there is a saddle point $z_0=1-b$. We   hereby deform the contour $\mathcal{C}$ into the union of two closed contours $\mathcal{C}_1 \bigcup \mathcal{C}^{-}_2$  such that   $\mathcal{C}_{1} =\{z\in \mathbb{C}:\abs{z+b}=1\}$  and  $\mathcal{C}^{-}_2$ is a clockwise
 contour encircling the segment $[0,1-b]$ but not $-b$ ($\mathcal{C}^{-}_2$  and $\mathcal{C}_2$ refer to the same curve except that the former indicates the clockwise direction). For instance, we can choose $\mathcal{C}_2$ as   the union of two segments from $-0.5b$ to $ -b+e^{\pm i\epsilon}$ respectively and an arc $\{z: z=-b+e^{i\theta},-\epsilon \leq \theta \leq \epsilon \}$ for some small positive $\epsilon$. With such a choice, we   divide the integration over $\mathcal{C}$ into two parts, and furthermore rewrite  the double   integral on the RHS of  \eqref{rescalingkernel} as a sum of two integrals, that is, \begin{align}   \frac{1}{(1-b)N}K_N\Big(\frac{\xi}{(1-b)N},\frac{\eta}{(1-b)N}\Big)&=
 \textrm{p.v.}\int_{0}^{\infty}du \int_{\mathcal{C}_1} dv  (\cdot)+ \textrm{p.v.}\int_{0}^{\infty}du \int_{\mathcal{C}^{-}_2} dv  (\cdot)\nonumber\\
 &:=I_{1}+I_{2}.
   \label{twointegrals}\end{align}
  Here the notation $\textrm{p.v.}$ denotes the Cauchy principal value  integral.  It is worth stressing that,   from \eqref{rescalingkernel},   we can put some restrictions on  the range of $u, v$ in the above integrals such that $u\neq 1-b$ and $v\neq \pm i$. This is done for   the convenience of  subsequent asymptotic analysis only.

   As   $N\rightarrow \infty$,  we claim that  the leading  contribution of the double   integral on the RHS of \eqref{rescalingkernel} comes from the range of $u\in (0,1-b)$ and $v\in \mathcal{C}_2$. Actually, for $I_2$, when $u>1-b$  the $v$-integral vanishes by Cauchy's theorem since the integrand does not have any singularities inside $\mathcal{C}_2$, while for $0<u<1$  application of the residue theorem gives
   \begin{align}   &I_2=\frac{1}{1-b}\int_{0}^{1-b}du\,
   (Nu)^{\nu_0} \Psi\big(Nu; \frac{\xi}{(1-b)N}\big) \Phi\big(Nu; \frac{\eta}{(1-b)N}\big). \label{I2form}
  \end{align}

   Using the asymptotic expansion of the function
   ${}_1F_1$ for the large argument (cf.~Theorem 4.2.2 and Corollary 4.2.3, \cite{AAR}), for large  $N$  we have
   \be  {}_1F_1(\nu_0+s;\nu_0+1;-Nv)= \frac{\Gamma(\nu_0+1)}{\Gamma(1-s)} (Nv)^{-\nu_0-s}\big(1+\mathcal{O}(\frac{1}{N})\big), \  \textrm{Re}\, v>0, \label{1asyptotics1f1}\ee
and
\be
  {}_1F_1(\nu_0+s;\nu_0+1;-Nv)=    \frac{\Gamma(\nu_0+1)}{\Gamma(\nu_0+s)}  (-Nv)^{s-1}e^{-Nv}\big(1+\mathcal{O}(\frac{1}{N})\big), \  \textrm{Re}\, v<0.  \label{2asyptotics1f1}\ee
  Keeping in mind  \eqref{function4} and \eqref{function1}, by definition of the Meijer G-function  ~\eqref{Gfunction}  we have from \eqref{1asyptotics1f1} that \be  (Nu)^{\nu_0}  \Phi\big(Nu; \frac{\eta}{(1-b)N}\big)\sim
    G^{r+1,0}_{0,r+2} \Big ({ \underline{\hspace{0.5cm}}
 \atop \nu_0,  \dots,\nu_r,0} \Big | \frac{u \eta}{1-b} \Big ).\label{phiasymtotics}\ee
 Here and below we use the notation $f_N \sim g_N$ to mean that $\lim_{N \to \infty} f_N/g_N = 1$.
 On the other hand, consideration of the definition \eqref{function3} shows
  \be   \Psi\big(Nu; \frac{\xi}{(1-b)N}\big)\sim G^{1,0}_{0,r+2} \Big ({ \underline{\hspace{0.5cm}}
 \atop 0,-\nu_0, \dots,-\nu_r} \Big |  \frac{u \xi}{1-b} \Big ), \label{psiasymtotics}\ee
 where use has been made of the identity (cf. eqn~ \eqref{FGrelation})
 \be G^{1,0}_{0,r+2} \Big ({ \underline{\hspace{0.5cm}}
 \atop 0,-\nu_0, \dots,-\nu_r} \Big |  z \Big )=  \prod_{l=0}^{r}\frac{1}{\Gamma(\nu_l+1)}\, {}_0F_{r+1}\big(\nu_0+1,  \dots,\nu_r+1; -z\big).\label{fgrelation}\ee

 Combining \eqref{I2form}, \eqref{phiasymtotics} and \eqref{psiasymtotics}, and changing variables we get
 \be I_2 \rightarrow  K^{{\rm h}, r}(\xi, \eta). \label{I2term}\ee

 Next,  we   deal with the integral $I_1$ and show that it is negligible. In this case because of different asymptotic forms of ${}_1F_1$ given in \eqref{1asyptotics1f1} and \eqref{2asyptotics1f1}, we divide $I_1$ into two parts as
 \begin{align}   I_1=  \textrm{p.v.}\int_{0}^{\infty}du \int_{\mathcal{C}_{1,+}} dv  (\cdot)+  \int_{0}^{\infty}du \int_{\mathcal{C}_{1,-}} dv  (\cdot):=I_{11}+I_{12},
   \label{twosubintegrals}\end{align}
 where $\mathcal{C}_{1,+}=\mathcal{C}_1 \bigcap\{v:\textrm{Re}\, v>0\}$ and $\mathcal{C}_{1,-}=\mathcal{C}_1 \bigcap\{v:\textrm{Re}\, v<0\}$.
 Notice that for $0<b<1$ one can easily check  that
    $ \textrm{Re}\{f(u)\}$   attains its global minimum at $u=1-b$ over $(0,\infty)$, while $ \textrm{Re}\{f(v)\}$    attains its global maximum at $v=1-b$ over $\mathcal{C}_1$.   Therefore,  for $I_{11}$ combining  \eqref{function4}, \eqref{function1}, \eqref{1asyptotics1f1} and  \eqref{psiasymtotics} we have
    \begin{multline}    I_{11}\sim  \textrm{p.v.} \int_{0}^{\infty}du \int_{\mathcal{C}_{1,+}} dv
   \frac{1}{2(1-b)\pi i}
     \frac{e^{-N(f(u)-f(v))}}{u-v}    \big( \frac{u}{v}\big)^{\nu_0} \\ G^{1,0}_{0,r+2} \Big ({ \underline{\hspace{0.5cm}}
 \atop 0,-\nu_0,\dots,-\nu_r} \Big |  \frac{u \xi}{1-b} \Big )\, G^{r+1,0}_{0,r+2} \Big ({ \underline{\hspace{0.5cm}}
 \atop \nu_0,  \dots,\nu_r,0} \Big | \frac{v \eta}{1-b} \Big ).
 \end{multline}
   For this,   the standard steepest  descent argument shows that the main contribution   comes from the neighbourhood of the saddle point $z_0=1-b$, namely,
    \be I_{11}=\mathcal{O}(1/\sqrt{N}). \label{I11term}\ee
     Similarly, for  $I_{12}$ combination of \eqref{function4}, \eqref{function1}, \eqref{2asyptotics1f1} and \eqref{psiasymtotics} then gives us
    \begin{align}    &I_{12}\sim     \int_{0}^{\infty}du \int_{\mathcal{C}_{1,-}} dv
   \frac{1}{2(1-b)\pi i} \frac{e^{-N(f(u)-f(1-b))}}{u-v}
       G^{1,0}_{0,r+2} \Big ({ \underline{\hspace{0.5cm}}
 \atop 0,-\nu_0,\dots,-\nu_r} \Big |  \frac{u \xi}{1-b} \Big )   \nonumber \\ &    \times   u^{\nu_0} e^{-N(\log(b+v) +1-b)}
   \frac{1}{2\pi i} \int_{c-i\infty}^{c+i\infty} ds\, \big(\frac{\eta}{1-b}\big)^{-s} (-v)^{s-1}N^{\nu_0+2s-1}   \prod_{l=1}^{r} \Gamma(\nu_l+s),
 \end{align}
  for which the  integrals of $u$ and $v$  respectively afford us  bounds $\mathcal{O}(1/\sqrt{N})$ and  $\mathcal{O}(N^{\nu_0+2c-1}e^{-(1-b)N})$. Together,  we obtain  the exponential decay estimation
    \be I_{12}=\mathcal{O}(N^{\nu_0+2c-3/2}e^{-(1-b)N}). \label{I12term}\ee

    Combining  \eqref{I2term}, \eqref{I11term} and \eqref{I12term}, we arrive at the equation \eqref{equationsub}. Furthermore, it is clear that the previously derived  estimates are valid  uniformly for $\xi, \eta$ in any given compact set  of $(0,\infty)$.
   \end{proof}

\begin{remark} When all the  parameters $a_l$'s are null, if we understand   the  double   integral representation \eqref{kernelCD}    as described  in  Remark \ref{nullremark}, then  the same  argument   as  in the proof of Theorem \ref{noncriticalkernel} is also applicable. This gives another derivation of
(\ref{r}) different from  that in \cite{KZ}.  \end{remark}

We turn to proofs of Theorems \ref{criticalkernel} and \ref{deformedcriticalkernel}.
 \begin{proof}[Proof of Theorem  \ref{criticalkernel}]    Rescaling  variables     in \eqref{kernelCD},    we have
 \begin{multline}   \frac{1}{ \sqrt{N}}K_N\Big(\frac{\xi}{\sqrt{N}},\frac{\eta}{\sqrt{N}}\Big)=\\\frac{\sqrt{N}}{2\pi i}\int_{0}^{\infty} du \int_{\mathcal{C}} dv \, \frac{e^{-N(f(u)-f(v))}}{u-v} (Nu)^{\nu_0} \Psi(Nu;\frac{\xi}{\sqrt{N}}) \Phi(Nv;\frac{\eta}{\sqrt{N}}),\label{ctriticalCD}
  \end{multline}
     where $ f(z)=z-\log(1+z/b)$ with $b=(1- \tau/\sqrt{N} )^{-1}$.  If $b$ is equal to the critical value 1, then the saddle point  of $f(z)$ is $z_0=0$. This time, for a fixed small number $\delta>0$
     we  choose the contour as \be   \mathcal{C} =\{z=-1+(1+2\delta)e^{i\theta}:\theta_0 \leq \abs{\theta}\leq \pi\}\cup \mathfrak{L}_{\mathrm{A_{-}} \mathrm{O}\mathrm{A_{+}}},\ee
     where   \be
      \theta_0=\arccos\frac{1+\delta}{1+2\delta}
, \qquad A_{\pm}=-1+(1+2\delta)e^{\pm i\theta_{0}},\ee
     and $\mathfrak{L}_{\mathrm{A_{-}} \mathrm{O}\mathrm{A_{+}}}$ denotes the union of two line segments from  the point $\mathrm{A_{-}}$ to the origin to the point  $\mathrm{A_{+}}$.
It is clear that $A_{\pm}= (\delta, \pm\sqrt{(2+3\delta)\delta})$,  and the   intersections of  the $y$-axis and the contour $\mathcal{C}$  are $B_{\pm}=(0, \pm 2\sqrt{(1+\delta)\delta})$.
 Moreover,  the four points come close to the origin as $\delta\rightarrow 0$, which permits us to use the Taylor series expansion    of $f(v)$ for any $v\in \mathcal{C_{+}}$ defined below \eqref{twopartssum}.

  First, we divide the integral on the RHS of \eqref{ctriticalCD} into two parts
  \be    \frac{1}{ \sqrt{N}}K_N\Big(\frac{\xi}{\sqrt{N}},\frac{\eta}{\sqrt{N}}\Big)=  \int_{0}^{\infty} du \int_{\mathcal{C_{-}}} dv \,(\cdot) + \int_{0}^{\infty} du \int_{\mathcal{C_{+}}} dv \,(\cdot) :=I_{-}+I_{+} ,\label{twopartssum}
  \ee  where   $\mathcal{C}_{-}= \{v\in \mathcal{C}:\textrm{Re}\, v<0\}$ and $\mathcal{C}_{+}=\{v\in \mathcal{C}:\textrm{Re}\, v>0\}$.
     We claim that  the dominant contribution to  \eqref{ctriticalCD}    comes   from the neighbourhoods of   $u_0=0$ and $v_0=0$, so we need to expand the function $f(z)$ at $z_0=0$. With the double scaling  in mind, we obtain the Taylor series  \begin{align} f(z)
    =\frac{\tau z}{\sqrt{N}}+\frac{1}{2} (1-\frac{\tau}{\sqrt{N}})^{2}z^{2}-\frac{1}{3} (1-\frac{\tau}{\sqrt{N}})^{3}z^{3}+\cdots. \label{Taylorexpansion}\end{align}
  Therefore for $I_+$, combining \eqref{function3},  \eqref{function4}, \eqref{function1} and \eqref{1asyptotics1f1}, together with the relation \eqref{fgrelation} and the definition of Meijer G-function  ~\eqref{Gfunction} we see that
  \begin{multline}    I_{+}\sim    \frac{\sqrt{N}}{2\pi i}\int_{0}^{\infty}du \int_{\mathcal{C}_{+}} dv
        \frac{e^{-N(f(u)-f(v))}}{u-v}    \big( \frac{u}{v}\big)^{\nu_0} \\ G^{1,0}_{0,r+2} \Big ({ \underline{\hspace{0.5cm}}
 \atop 0,-\nu_0,\dots,-\nu_r} \Big |   \sqrt{N}u \xi \Big )  G^{r+1,0}_{0,r+2} \Big ({ \underline{\hspace{0.5cm}}
 \atop \nu_0,  \dots,\nu_r,0} \Big |  \sqrt{N}v \eta \Big ). \label{I2critical}
 \end{multline}
 Fix the two endpoints $B_{\pm}$ of $\mathcal{C}_{+}$ and deform it  to the imaginary axis, then after substituting \eqref{Taylorexpansion} into \eqref{I2critical} and rescaling $u,v$ by   $u/\sqrt{N}, v/\sqrt{N} $, we conclude that  $I_+$ converges to  the kernel defined by  \eqref{criticalk}, uniformly for  $\xi, \eta$ in a compact set  of $(0,\infty)$ and for $\tau$ in a compact set  of $\mathbb{R}$.

 Secondly, for the integral $I_-$,  combination of  \eqref{function3},  \eqref{function4}, \eqref{function1} and \eqref{2asyptotics1f1} yields
  \begin{multline}    I_{-}\sim    \frac{1}{2\pi i}\int_{0}^{\infty}du \int_{\mathcal{C}_{-}} dv
        \frac{e^{-Nf(u)-N\log(1+v/b)}}{u-v}    u^{\nu_0}   G^{1,0}_{0,r+2} \Big ({ \underline{\hspace{0.5cm}}
 \atop 0,-\nu_0,\dots,-\nu_r} \Big |   \sqrt{N}u \xi \Big ) \\    \times
   \frac{1}{2\pi i} \int_{c-i\infty}^{c+i\infty} ds\,  \eta^{-s} (-v)^{s-1}N^{\nu_0+(3s-1)/2}   \prod_{l=1}^{r} \Gamma(\nu_l+s).
 \end{multline}
 Since for sufficiently large $N$, \be\textrm{Re}\{\log(1+\frac{v}{b})\} =\frac{1}{2}\log\big((1+2\delta)^2+\mathcal{O}(\frac{\tau}{\sqrt{N}})\big)>\log(1+\delta)\ee
holds true uniformly  for $\tau$ in a compact set of $\mathbb{R}$ and for $v\in \mathcal{C}_{-}$,  use of the steepest descent argument  leads to an exponential decay \be I_-=\mathcal{O}\big(N^{\nu_0-1+1.5c}e^{-N\log(1+\delta)}\big).\ee

Lastly, by combining the foregoing results for $I_-$ and $I_+$, we then complete the proof.
\end{proof}

%

 \begin{proof} [Proof  of Theorem  \ref{deformedcriticalkernel}]    Rescaling  variables     in \eqref{kernelCD},    we have
 \begin{multline}   \frac{1}{ \sqrt{N}}K_N\Big(\frac{\xi}{\sqrt{N}},\frac{\eta}{\sqrt{N}}\Big)=\frac{1}{2\pi i}\int_{0}^{\infty} du \int_{\mathcal{C}} dv\,  N^{\nu_0+1/2}u^{\nu_0}\frac{e^{-N(f(u)-f(v))}}{u-v} \\ \,\times \prod_{j=1}^{m}\frac{(v+b)(u+\sigma_j/\sqrt{N})}{(u+b)(v+\sigma_j/\sqrt{N})} \,  \Psi(Nu;\frac{\xi}{\sqrt{N}})\Phi(Nv;\frac{\eta}{\sqrt{N}}),
  \end{multline}
     where $ f(z)=z-\log(1+z/b)$ with $b= (1-\tau/\sqrt{N})^{-1}$.

  Proceeding as in the proof  of Theorem \ref{criticalkernel}, Taylor  expanding  $f(z)$ at $z=0$, and rescaling $u,v$ by   $u/\sqrt{N}, v/\sqrt{N} $, we  can complete the proof.
 \end{proof}

We next consider the  supercritical case, that is $b>1$. For $r=0$, the limiting eigenvalue density has support $[L_1, L_2]$ with $L_1>0$ (thus the left-most end changes from the hard to the  soft edge as $b$ increases beyond unity as already remarked in the Introduction); see e.g. \cite{KMW09,KFW11}. However, for $r>0$ and fixed $\nu_0, \nu_1,  \ldots, \nu_r \geq 0$, considerations from free probability theory
suggest that the support will include the origin for general $b$. Nonetheless, in the simplest case of $r=0$ a particular tuning and scaling of the supercritical case can be given which, on an appropriate length scale, effectively separates a bunch of  eigenvalues near the origin from the rescaled left-end support.   A similar result is conjectured to be true for the general  $r>0$.

\begin{theorem} [Supercritical regime for $r=0$]\label{supercriticalkernel}
 With    the kernel    \eqref{kernelCD} where $r=0$, for a fixed positive integer $m$  let
\be a_j=\sigma_j b/(b-1), \,  j=1, \ldots, m \  \mbox{and} \  a_k=bN, \, k=m+1, \ldots, N,
 \label{mconfluentsource2}\ee
 where $b>1$ and $\sigma_1, \ldots, \sigma_m>0$. Then   we have
\begin{align}  &\lim_{N\rightarrow \infty}    e^{(1-\frac{1}{b})(\eta-\xi)}    \big(1-\frac{1}{b}\big)   K_N\Big(\big(1-\frac{1}{b}\big) \xi, \big(1-\frac{1}{b}\big)\eta\Big)  =
  \frac{1}{2\pi i}\frac{\eta^{\nu_0}}{(\Gamma(\nu_0+1))^{2}}   \nonumber \\  & \times \,
 \int_{0}^{\infty}du \int_{ \mathcal{C}} dv\, {}_0F_1(\nu_0+1;-u\xi) \, {}_0F_1(\nu_0+1;-v\eta) u^{\nu_0} \frac{e^{-u + v}}{u-v} \prod_{j=1}^{m}\frac{u+\sigma_j}{v+\sigma_j},   \label{r0super}
 \end{align}
where  $\mathcal{C}$ is a counterclockwise contour encircling  $-\sigma_1, \ldots, -\sigma_m$  but not $u$.

 \end{theorem}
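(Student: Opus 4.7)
I would adapt the rescaling plus contour-splitting strategy used in Theorems \ref{noncriticalkernel}--\ref{deformedcriticalkernel} to the supercritical tuning, whose purpose is to isolate a bunch of outlier eigenvalues pulled away from $0$ by the small-source eigenvalues $\sigma_j b/(b-1)$. Set $\alpha = 1 - 1/b \in (0,1)$, so that $bN = N/(1-\alpha)$ and $\sigma_j b/(b-1) = \sigma_j/\alpha$. Insert $x = \alpha\xi$ and $y = \alpha\eta$ into \eqref{kernelCD}, then substitute $u = \tilde u/\alpha$, $v = \tilde v/\alpha$, and apply the explicit $r=0$ formulas \eqref{E0a}, \eqref{E1a}. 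The explicit prefactor $e^{(1-1/b)(\eta-\xi)} = e^{\alpha(\eta-\xi)}$ in the theorem is designed to kill exactly the factors $e^{\alpha\xi}$ and $e^{-\alpha\eta}$ pulled out of $\Psi$ and $\Phi$; the Jacobian $\alpha^{-2}$ and the powers of $\alpha$ from $u^{\nu_0}$ and $(\alpha\eta)^{\nu_0}$ cancel against the external $\alpha$ and the rescaling of $1/(u-v)$. The remaining $N$-dependent factor becomes
$$\left(\frac{1 + \tilde u(1-\alpha)/(N\alpha)}{1 + \tilde v(1-\alpha)/(N\alpha)}\right)^{N-m} \longrightarrow e^{(\tilde u - \tilde v)(1-\alpha)/\alpha},$$
and combined with the $e^{-(\tilde u - \tilde v)/\alpha}$ left over from $e^{-u+v}$ this collapses to $e^{-\tilde u + \tilde v}$, matching the integrand of \eqref{r0super}.

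Next, decompose $\mathcal{C} = \mathcal{C}_0 \cup \mathcal{C}_\infty$ into two disjoint loops, the first enclosing only the simple poles at $-\sigma_l/\alpha$ ($l \le m$) and the second enclosing only the confluent $(N-m)$-fold pole at $-bN$; write $K_N = K_N^{(0)} + K_N^{(\infty)}$. For $K_N^{(0)}$ the rescaled $\tilde v$-contour $\mathcal{C}_0'$ is $N$-independent, bounded, and encloses $-\sigma_1,\ldots,-\sigma_m$ (but not $\tilde u>0$); the integrand converges pointwise to the integrand of \eqref{r0super}. An integrable dominant is furnished by the bound $(1 + \tilde u(1-\alpha)/(N\alpha))^{N-m} \le e^{\tilde u(1-\alpha)/\alpha}$ for $\tilde u \ge 0$, together with $|{}_0F_1(\nu_0+1; -\tilde u\xi)| = O(\tilde u^{-\nu_0/2-1/4})$ as $\tilde u \to \infty$; the remaining factors are uniformly bounded on $\mathcal{C}_0'$. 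Dominated convergence therefore gives $e^{\alpha(\eta-\xi)}\alpha K_N^{(0)}(\alpha\xi,\alpha\eta) \to$ RHS of \eqref{r0super}, uniformly for $\xi,\eta$ in compact subsets of $(0,\infty)$.

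The main technical obstacle is showing $K_N^{(\infty)} \to 0$ for every $b > 1$ (not just $b$ large). Parametrize $\mathcal{C}_\infty = \{v : |v + bN| = \rho N\}$ with $\rho \in (0,b)$ to be chosen; for large $N$ this contour encloses $-bN$ but avoids both the $-\sigma_l/\alpha$ and the set $u>0$. On $\mathcal{C}_\infty$ one has $|e^v| \le e^{-N(b-\rho)}$, $|{}_0F_1(\nu_0+1; -v\alpha\eta)| = O(e^{C(\eta)\sqrt N})$, the denominators $1/|v+\sigma_l/\alpha|$ and $1/|u-v|$ are $O(1/N)$, and $(u+bN)^{N-m} \le (bN)^{N-m} e^{u/b}$; the latter combines with $e^{-u}$ from the kernel to give an integrable factor $e^{-\alpha u}$ in the $u$-variable. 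The standard bound $|\oint_{\mathcal{C}_\infty}| \le 2\pi\rho N \sup_{\mathcal{C}_\infty}|\cdot|$ together with the denominator $(\rho N)^{N-m}$ then yields, after integration in $u$, an estimate of the shape
$$|K_N^{(\infty)}| \le C_{\xi,\eta}\, \exp\!\bigl\{-N\bigl[(b - \rho) - \log(b/\rho)\bigr] + O(\sqrt N)\bigr\}.$$
The function $h(\rho) = (b - \rho) - \log(b/\rho)$ satisfies $h(b) = 0$ and $h'(b) = -1 + 1/b < 0$ for $b > 1$, so $h(\rho) > 0$ for $\rho$ just below $b$; fixing such a $\rho$ forces $K_N^{(\infty)} \to 0$ exponentially fast, uniformly on compacta. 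Summing the two contributions finishes the proof.
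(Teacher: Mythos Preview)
Your argument is correct and follows the same overall scheme as the paper: rescale, split the $v$--contour into a bounded loop around the outlier poles $-\sigma_j/\kappa$ and a loop around $-bN$, pass to the limit on the first piece, and show the second vanishes.  The only genuine difference is in how you dispose of the $-bN$ contribution.  The paper opens $\mathcal{C}_2$ into a Hankel-type path to $-\infty$ (justified by the ${}_0F_1$ asymptotics \eqref{asymptotic0f1function}) and then argues that after $N\to\infty$ the pole has escaped and the resulting holomorphic integrand gives zero; you instead keep a circle $|v+bN|=\rho N$ and produce the explicit exponential rate $h(\rho)=(b-\rho)-\log(b/\rho)>0$ near $\rho=b^-$ (in fact the optimal choice $\rho=1$ gives $h(1)=b-1-\log b>0$ for all $b>1$).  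Your route is slightly more self-contained in that it avoids justifying the interchange of limit and integration over an unbounded contour; the paper's route is shorter but leaves that interchange implicit.
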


 \begin{proof}   Set $\kappa=(b-1)/b$.  For the large $N$,   we have from   \eqref{kernelCD} with $r=0$ that
 \begin{multline}     \big(1-\frac{1}{b}\big) K_N\Big(\big(1-\frac{1}{b}\big) \xi, \big(1-\frac{1}{b}\big)\eta\Big)  =  \frac{\kappa}{2\pi i}\int_{0}^{\infty}du \int_{ \mathcal{C}}  dv \  u^{\nu_0}\frac{e^{-u + v}}{u-v}      \\  \times  \Psi(u;\kappa\xi)\Phi(v;\kappa\eta)  \prod_{j=1}^{m}\frac{u+\sigma_j/\kappa}{v+\sigma_j/\kappa}\,  \Big(\frac{1+u/(bN)}{1+v/(bN)}\Big)^{N-m}:=I_1+I_2,
  \end{multline}
  where we have rewritten   $\mathcal{C}=\mathcal{C}_1\cup\mathcal{C}_2$ and $I_j=\int_{0}^{\infty}du \int_{ \mathcal{C}_j}  dv \,(\cdot)$. The closed contour $\mathcal{C}_1$ encircles  $-\sigma_1/\kappa, \ldots, -\sigma_m/\kappa$  and on its left lies the path  $\mathcal{C}_2$ encircling  $-bN$, beginning at and returning to $-\infty$. Keeping    \eqref{E0a} and \eqref{E1a} in mind,     for the choice of the contour of   $I_2$   we have used   the asymptotic property of ${}_0F_1$ (cf. \cite[Sect. 5.11.2]{Luke})
  \begin{multline} {}_0F_1(\nu_0+1;z)=\frac{\Gamma(\nu_0+1)}{2\sqrt{\pi}}(-z)^{- \frac{1+2\nu_0 }{4} }\\ \times \Big(e^{-2i\sqrt{-z}}(1+\mathcal{O}(\frac{1}{\sqrt{z}}))+e^{2i\sqrt{-z}}(1+\mathcal{O}(\frac{1}{\sqrt{z}}))\Big), \qquad \abs{z} \rightarrow \infty, \label{asymptotic0f1function}\end{multline}

   Again with \eqref{E0a} and \eqref{E1a} in mind,  by taking the limit and changing variables, as $N\rightarrow \infty$ it is less difficult to know that $e^{\kappa(\eta-\xi)} I_1$ goes to the desired integral.  For the part $I_2$, by   the fact \eqref{asymptotic0f1function},
   taking the limit in the integrand  we see that    the $v$-integral over the closed contour $\mathcal{C}_2$   equals zero since the resulting integrand has no pole.   The proof is thus completed.
   \end{proof}


By comparison with (\ref{kernelCD}) in the case $r=0$, $N=m$, $\{a_l\} = \{\sigma_l\}$,   substitution of (\ref{E0a}) and (\ref{E1a}) shows that
the RHS of (\ref{r0super}) is equal to $e^{\eta-\xi}K_m(\xi, \eta) |_{\{a_l\} = \{\sigma_l\}}$, which is equivalent to the kernel for the $m\times m$ Laguerre Unitary Ensemble  with a source (see e.g. \cite{DF08}  or \cite[Chapter 11]{Fo10}).
For general $r \ge 1$,  as to  the supercritical case of $b>1$  the following similar result is expected to be true
\begin{align}  \lim_{N\rightarrow \infty}\big(1-\frac{1}{b}\big) K_N\Big(\big(1-\frac{1}{b}\big) \xi, \big(1-\frac{1}{b}\big)\eta\Big) & =   \frac{1}{2\pi i}\int_{0}^{\infty}du \int_{ \mathcal{C}}  dv \  \big(\frac{u}{\kappa}\big)^{\nu_0}\frac{e^{-   u + v}}{u-v} \nonumber \\  \times \, \,  \Psi(u/\kappa;\kappa\xi)\Phi(v/\kappa;\kappa\eta) \prod_{j=1}^{m}\frac{u+\sigma_j}{v+\sigma_j}\,
 & =: {\widetilde{\mathcal{K}}}^{{\rm h},r}_{m}(\xi,\eta;\kappa;\sigma), \label{supercriticalk}\end{align}
where $\kappa=(b-1)/b$, $\Psi, \Phi$ are given  by \eqref{function3}, \eqref{function4}, and  $\mathcal{C}$ is a counterclockwise contour encircling  $-\sigma_1, \ldots, -\sigma_m$  but not $u$. To prove it, if  we might control the behavior of $e^{-\kappa u}\Psi(u;\kappa\xi)$ and  $e^{\kappa v} \Phi(v;\kappa\eta)$ (for instance, we can try to  derive  an estimate
  $e^{\kappa v} \Phi(v;\kappa\eta)=\mathcal{O}(v^{-2}) $ as $v\rightarrow -\infty$  which  should further be expected to vanish  sub-exponentially), then  $I_2\rightarrow 0$ and $I_1$ goes to the desired integral as in the proof of Theorem \ref{supercriticalkernel}.  But such an estimate is yet to be found. Furthermore,
we expect the correlations implied by (\ref{supercriticalk}) to be the same as those for $K_m(\xi, \eta) |_{\{a_l\} = \{\sigma_l\}}$  after being multiplied by the factor $g(\kappa;\eta)/g(\kappa;\xi)$ for some properly chosen function $g$.   However, the mechanism which makes this true
 in the cases $r \ge 1$ remains to be clarified.

\begin{remark}
It is of interest to contrast the scalings of $\{a_j\}$ in  Theorem  \ref{deformedcriticalkernel} and \ref{supercriticalkernel} applying to the critical and supercritical cases
respectively. Some insight as to the chosen values is possible by restricting attention to the case $r=0$, for which the squared singular values
have the interpretation as non-intersecting Brownian particles confined to a half line, as mentioned   in the Introduction. In this interpretation, the initial
position of particle $j$ is $a_j$, and the particles evolve for time $t=1$. We interpret the values in $a_j$ in
Theorem  \ref{deformedcriticalkernel}  as being such that the particles at the hard edge are all of the same order, with the $k$ outlier
particles appropriately merging with the spectrum edge of the $N-k$ particles which started originally at
$N(1-\tau/\sqrt{N})$. On the other hand,
in Theorem \ref{supercriticalkernel} only the $k$ particles starting at order unity from the hard edge are at order unity from the hard
edge when $t=1$, with the remaining $N-k$ particles never reaching the hard edge by this time.
\end{remark}

\begin{remark}
  If we strengthen the results in Theorems  \ref{criticalkernel}, \ref{deformedcriticalkernel}, \ref{noncriticalkernel} and \ref{supercriticalkernel}
  from uniform convergence into the trace norm convergence of the integral operators with respect to the correlation kernels, then as a direct consequence we  have   the limiting gap probabilities after rescaling, especially including the  smallest eigenvalue distribution; see \cite[Chapters 8 \& 9]{Fo10}.   Since the proof of trace norm convergence is only a technical elaboration that confirms a well-expected result, we do not give the details.
\end{remark}

\subsection{Characteristic polynomials} \label{limitpolynomial} In this subsection we want to evaluate scaling limits for the ratio of  characteristic polynomials according to three different regimes.  \begin{theorem}   With  the eigenvalue PDF (\ref{B}), fix  $m\in \{0,1,2,\ldots,\}$ and let  \be a_{m+1}= \cdots= a_N=N b.\nonumber\ee

 (i) Set  $a_j=N b_j$ with $b_j>0$ for   $j=1,\ldots,m$, if $0<b<1$, then    for $\zeta\in \mathbb{C}\backslash\mathbb{R}$,
\be\lim_{N\rightarrow \infty} \frac{1}{(1-b)N}\mathbb{E}\Big[\prod_{l=1}^{N}\frac{x_l-\xi/((1-b)N)}{x_l-\zeta/((1-b)N)}\Big]=\int_0^{\infty} du \frac{\xi-u}{\zeta-u}K^{{\rm h}, r}(\xi, u).\label{ratioi}\ee

(ii) Set $ a_j=\sqrt{N}\sigma_j$ with $\sigma_j>0$ for $j=1, \ldots, m$, if  $b=1/(1- \tau/\sqrt{N})$ with $\tau\in \mathbb{R}$, then   for $\zeta\in \mathbb{C}\backslash\mathbb{R}$,
\begin{align} \lim_{N\rightarrow \infty} \frac{1}{\sqrt{N}}\mathbb{E}\Big[\prod_{l=1}^{N}\frac{x_l-\xi/\sqrt{N}}{x_l-\zeta/\sqrt{N}}\Big]=\int_0^{\infty} du \frac{\xi-u}{\zeta-u} {\mathcal K}^{{\rm h},r}_{m}(\xi,u;\tau,\sigma).
 \end{align}

(iii) Set $ a_j= \sigma_j b/(b-1)$ with $\sigma_j>0$ for $j=1, \ldots, m$, if  $b>1$ and $m\geq 1$, then  for $r=0$ and  for $\zeta\in \mathbb{C}\backslash\mathbb{R}$,
\begin{align} \lim_{N\rightarrow \infty} (1-\frac{1}{b})\mathbb{E}\Big[\prod_{l=1}^{N}\frac{x_l-(1-\frac{1}{b})\xi}{x_l-(1-\frac{1}{b})\zeta}\Big]=\int_0^{\infty} du \frac{\xi-u}{\zeta-u} {\widetilde{\mathcal{K}}}^{{\rm h},0}_{m}(\xi,u;1-\frac{1}{b};\sigma).
 \end{align}
\end{theorem}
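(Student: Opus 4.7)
The plan is to exploit Proposition \ref{averageratio} (equivalently, Proposition \ref{averageproduct}(i)) to convert each expectation of a ratio of characteristic polynomials into an integral against the correlation kernel $K_N$, then rescale so that the integrand carries the rescaled kernel, and finally invoke the three hard edge limit theorems (Theorems~\ref{noncriticalkernel}, \ref{deformedcriticalkernel}, \ref{supercriticalkernel}) already established in Section~\ref{sectionhardlimit}. Since the integrands $(x_l-\xi)/(x_l-\zeta)$ in the theorem differ from $(\xi-x_l)/(\zeta-x_l)$ by a factor of $(-1)^N/(-1)^N=1$, Proposition~\ref{averageratio} applies directly.

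For part (i), I would set $x=\xi/((1-b)N)$, $z=\zeta/((1-b)N)$ in Proposition~\ref{averageratio}, then change the integration variable $u\mapsto u/((1-b)N)$. Dividing through by $(1-b)N$ yields
\begin{align*}
\frac{1}{(1-b)N}\mathbb{E}\Big[\prod_{l=1}^{N}\frac{x_l-\xi/((1-b)N)}{x_l-\zeta/((1-b)N)}\Big]
=\int_0^{\infty}du\,\frac{\xi-u}{\zeta-u}\cdot \frac{1}{(1-b)N}K_N\Big(\frac{\xi}{(1-b)N},\frac{u}{(1-b)N}\Big).
\end{align*}
Theorem~\ref{noncriticalkernel} gives pointwise (in fact uniform on compacta of $(0,\infty)^2$) convergence of the rescaled kernel to $K^{{\rm h},r}(\xi,u)$, and the condition $\zeta\in\mathbb{C}\setminus\mathbb{R}$ makes the prefactor $(\xi-u)/(\zeta-u)$ uniformly bounded in $u\in(0,\infty)$. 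Parts (ii) and (iii) proceed along identical lines: substitute $x=\xi/\sqrt{N}$, $z=\zeta/\sqrt{N}$ and rescale $u\mapsto u/\sqrt{N}$, then apply Theorem~\ref{deformedcriticalkernel}; respectively substitute $x=(1-1/b)\xi$, $z=(1-1/b)\zeta$ and rescale $u\mapsto (1-1/b)u$, then apply Theorem~\ref{supercriticalkernel}.

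The main obstacle is interchanging the limit with the $u$-integral over the unbounded interval $(0,\infty)$, since the cited kernel limits are only stated uniformly on compact subsets of $(0,\infty)$. To justify passage to the limit one needs integrable majorants of the integrand $(\xi-u)(\zeta-u)^{-1}N^{-\alpha}K_N(\cdot,\cdot)$ uniform in $N$, both as $u\to 0^+$ (where the hard edge behaviour must not produce non-integrable singularities) and as $u\to\infty$ (where one needs decay faster than $u^{-1}$ to beat the $\mathcal O(1)$ prefactor). The large-$u$ tail can be controlled by going back to the double contour representation~(\ref{kernelCD}), moving the $u$-contour in the saddle-point estimates of Section~\ref{limitkernel} off the real axis to produce Gaussian/exponential decay, while the $u\to 0^+$ behaviour is governed by the explicit Meijer G-function (or Bessel) factor $\Psi$ whose power-like growth is integrable against the Lebesgue measure near the origin. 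In each of the three regimes the saddle-point analyses used to prove Theorems~\ref{noncriticalkernel}--\ref{supercriticalkernel} already produce these estimates essentially as by-products, so the dominated convergence theorem applies.

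Once the interchange is justified, inserting the appropriate limiting kernel ($K^{{\rm h},r}$, $\mathcal K^{{\rm h},r}_m$, or $\widetilde{\mathcal K}^{{\rm h},0}_m$) into the integral yields the three displayed identities. A small bookkeeping remark: in part (ii) the parameters $a_j=\sqrt{N}\sigma_j$ match exactly the scaling in Theorem~\ref{deformedcriticalkernel}, while in part (iii) the scaling $a_j=\sigma_j b/(b-1)$ is precisely the one in Theorem~\ref{supercriticalkernel}, so no additional rescaling of the source parameters is needed beyond invoking those theorems verbatim.
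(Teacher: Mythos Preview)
Your approach matches the paper's: both convert the expectation via Proposition~\ref{averageproduct}(i) into $\int_0^\infty \frac{x-u}{z-u}K_N(x,u)\,du$, rescale, and then feed in the hard edge kernel limits of Theorems~\ref{noncriticalkernel}, \ref{deformedcriticalkernel} and \ref{supercriticalkernel}. Your discussion of the dominated convergence issue is in fact more explicit than the paper's own proof, which simply asserts that the three kernel theorems imply the result.

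There is one small oversight. In part~(i) you invoke Theorem~\ref{noncriticalkernel} verbatim, but that theorem is stated under the hypothesis $a_1=\cdots=a_N=bN$, whereas here the first $m$ parameters are $a_j=Nb_j$ with $b_j$ possibly different from $b$. The paper flags this explicitly: ``a minor modification in the proof of Theorem~\ref{noncriticalkernel} is required in relation to~\eqref{ratioi} (in the same circumstance the limiting subcritical kernel still holds true).'' Concretely, the extra factor $\prod_{j=1}^m\frac{u+b_j}{v+b_j}$ appearing in the rescaled double integral~(\ref{rescalingkernel}) tends to $1$ near the saddle point $u=v=1-b$ and does not alter the steepest-descent analysis, so the same limit $K^{{\rm h},r}$ results. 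You should say a word to this effect rather than citing Theorem~\ref{noncriticalkernel} as if its hypotheses were met.
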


\begin{proof} By Proposition \ref{averageproduct}, Theorems \ref{noncriticalkernel}, \ref{deformedcriticalkernel} and \ref{supercriticalkernel} imply the sought results
although a minor modification in the proof of Theorem \ref{noncriticalkernel} is required in relation to \eqref{ratioi} (in the same circumstance the limiting subcritical kernel still holds true). \end{proof}

Likewise, based on Proposition \ref{averageproduct}, we can prove the following   theorem  concerning the average of one single characteristic polynomial or its inverse. For this
purpose we introduce four sets of  generalised multiple functions (we say generalised since only for $r=0$ do we know the multiple polynomial system;
recall Remark \ref{R2.2}) of types II and I with $m$ parameters  $\sigma_1, \ldots, \sigma_m>0$. For $k=1,2\ldots, m$,   we define two sets of  generalised multiple functions by
\begin{multline}  \Gamma^{(k)}(x;\sigma_1,\ldots,\sigma_{k-1})= \int_{0}^{\infty}du u^{\nu_0} e^{- \tau u-\frac{1}{2}u^2  } \\
\times G_{0,r+2}^{1,0}  \Big({\atop 0, -\nu_0, -\nu_1,\ldots,-\nu_r}\Big|x u \Big)\prod_{j=1}^{k-1}(u+\sigma_j) ,\label{typeIIpearceylikefunction}\end{multline}
and
\begin{multline} \widetilde{\Gamma}^{(k)}(x;\sigma_1,\ldots,\sigma_k)=\frac{1}{2\pi i} \int_{-i\infty}^{i\infty} dv   v^{-\nu_0}e^{\tau v+\frac{1}{2}v^2} \\ \times G_{0,r+2}^{r+1,0} \Big({\atop \nu_0,  \nu_1,\ldots,\nu_r,0}\Big|xv\Big) \prod_{j=1}^{k}\frac{1}{v+\sigma_j}, \label{typeIpearceylikefunction}\end{multline}
while for $0<\kappa\leq 1$ two sets of Laguerre-like  generalised multiple functions are defined by
\be  \mathcal{L}^{(k)}(x;\kappa;\sigma_1,\ldots,\sigma_{k-1})= \int_{0}^{\infty} du   \, (u/\kappa)^{\nu_0}e^{-u} \Psi(u/\kappa;\kappa x) \prod_{l=1}^{k-1} (u+\sigma_l),\label{typeIIlaguerrelikefunction}\ee
and
\be  \widetilde{\mathcal{L}}^{(k)}(x;\kappa;\sigma_1,\ldots,\sigma_k)=\frac{1}{2\pi i} \int_{\gamma} dv \,  e^{v} \Phi(v/\kappa;\kappa x)  \prod_{l=1}^{k}\frac{1}{v+\sigma_l}. \label{typeIlaguerrelikefunction}\ee
Here   $\gamma$  is a closed path which is     encircling $-\sigma_1, \ldots, -\sigma_m$  once in the positive direction.

\begin{theorem}   With  the eigenvalue PDF (\ref{B}), fix  $m\in \{0,1,2,\ldots,\}$ and let  \be a_{m+1}= \cdots= a_N=N b.\nonumber\ee

 (i) Set  $a_j=N b_j$ with $b_j>0$ for   $j=1,\ldots,m$, if $0<b<1$, then    
\begin{multline}\lim_{N\rightarrow \infty} \frac{-\sqrt{N}}{\Upsilon_{N}^{(\mathrm{sub})}}\mathbb{E}\Big[\prod_{l=1}^{N}\frac{1}{x_l-\zeta/((1-b)N)}\Big]\\=\int_0^{\infty} du \frac{1}{\zeta-u}G_{0,r+2}^{r+1,0} \Big({\atop \nu_0,  \nu_1,\ldots,\nu_r,0}\Big|u\Big),\end{multline}
\begin{multline}\lim_{N\rightarrow \infty}  \sqrt{N}\prod_{l=1}^r(\nu_l+N)\, \Upsilon_{N}^{(\mathrm{sub})}  \mathbb{E}\Big[\prod_{l=1}^{N}\big(x_l-\xi/((1-b)N)\big)\Big]\\= G_{0,r+2}^{1,0} \Big({\atop 0,-\nu_0,  -\nu_1,\ldots,\nu_r}\Big|\xi\Big)\end{multline}
where \be \Upsilon_{N}^{(\mathrm{sub})}=(-1)^{N}\sqrt{2\pi}N^{\nu_0+N}e^{-(1-b)N}\prod_{l=1}^r\Gamma(\nu_l+N)\,(1-b)^{\nu_0}\prod_{j=1}^m (1-b+b_j).\ee
(ii) Set $ a_j=\sqrt{N}\sigma_j$ with $\sigma_j>0$ for $j=1, \ldots, m$, if  $b=1/(1- \tau/\sqrt{N})$ 
, then
\begin{equation}\lim_{N\rightarrow \infty} \frac{-\sqrt{N}}{\Upsilon_{N}^{(\mathrm{cri})}}\mathbb{E}\Big[\prod_{l=1}^{N}\frac{1}{x_l-\zeta/\sqrt{N}}\Big]=\int_0^{\infty} du \frac{1}{\zeta-u}\,  \widetilde{\Gamma}^{(m)}(u;\sigma_1,\ldots,\sigma_m),\end{equation}
\be \lim_{N\rightarrow \infty}  \sqrt{N}\prod_{l=1}^r(\nu_l+N) \, \Upsilon_{N}^{(\mathrm{cri})}  \mathbb{E}\Big[\prod_{l=1}^{N}\big(x_l-\xi/\sqrt{N}\big)\Big]={\Gamma}^{(m+1)}(\xi;\sigma_1,\ldots,\sigma_m)\ee
where \be \Upsilon_{N}^{(\mathrm{cri})}=(-1)^{N}N^{N+(\nu_0-m)/2}e^{\sqrt{N}\tau+\tau^2/2}\prod_{l=1}^r\Gamma(\nu_l+N).\ee

(iii) Set $ a_j= \sigma_j b/(b-1)$ with $\sigma_j>0$ for $j=1, \ldots, m$, if  $b>1$, then    for $r=0$,
\be \lim_{N\rightarrow \infty} \frac{-b}{(b-1)\Upsilon_{N}^{(\mathrm{sup})}}\mathbb{E}\Big[\prod_{l=1}^{N}\frac{1}{x_l-(1-\frac{1}{b})\zeta}\Big]=\int_0^{\infty} du \frac{1}{\zeta-u}\,  \widetilde{\mathcal{L}}^{(m)}(u;1-1/b;\sigma_1,\ldots,\sigma_m),\ee
\be \lim_{N\rightarrow \infty}   \frac{b}{b-1}\prod_{l=1}^r(\nu_l+N) \, \Upsilon_{N}^{(\mathrm{sup})}  \mathbb{E}\Big[\prod_{l=1}^{N}\big(x_l-(1-1/b)\xi\big)\Big]={\mathcal{L}}^{(m+1)}(\xi;1-1/b;\sigma_1,\ldots,\sigma_m)\ee
where \be \Upsilon_{N}^{(\mathrm{sup})}=(-1)^{N}(bN)^{N-m}\big(b/(b-1)\big)^{m}.\ee
\end{theorem}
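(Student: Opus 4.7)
The plan is to reduce all six limits to the integral representations \eqref{NE1} and \eqref{NE2} of Proposition \ref{averageproduct}, and then carry out exactly the saddle-point analyses already used in Section \ref{sectionhardlimit} for the corresponding correlation-kernel theorems.

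For part (i), substitute $z = \zeta/((1-b)N)$ in \eqref{NE1} and rescale $t = \tilde t/((1-b)N)$, $v = N\tilde v$. The product $\prod_{l=1}^{N}(v+a_l)^{-1}$ splits into an outlier piece $\prod_{j=1}^{m}(\tilde v+b_j)^{-1}$ and a bulk piece $(\tilde v + b)^{-(N-m)}$; the latter combines with $e^{N\tilde v}$ to produce the same exponential $e^{-N f(\tilde v)}$, $f(\tilde v) = \log(\tilde v + b) - \tilde v$, whose saddle at $\tilde v_0 = 1-b$ drives the proof of Theorem \ref{noncriticalkernel}. The asymptotic \eqref{phiasymtotics} converts $\Phi$ into $G^{r+1,0}_{0,r+2}$; applying Laplace's method at $\tilde v_0$ together with Stirling's formula for $\prod_{l=1}^{r}\Gamma(\nu_l+N)$ produces precisely the constant $\Upsilon_N^{(\mathrm{sub})}$ and the $1/\sqrt N$ Gaussian factor, while the outlier piece evaluated at the saddle contributes $\prod_{j=1}^{m}(1-b+b_j)^{-1}$, absorbed in $\Upsilon_N^{(\mathrm{sub})}$. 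The portion of the $v$-contour with $\textrm{Re}\,\tilde v < 0$, where \eqref{1asyptotics1f1} must be replaced by \eqref{2asyptotics1f1}, is bounded by the argument giving \eqref{I12term} and contributes an exponentially small error. The companion limit in (i) is handled identically starting from \eqref{NE2} and using \eqref{psiasymtotics} to produce $G^{1,0}_{0,r+2}$.

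For part (ii) the same recipe applies, with $b = (1-\tau/\sqrt N)^{-1}$, $a_j = \sqrt N\sigma_j$, and external rescaling by $1/\sqrt N$. The saddle degenerates at the origin and the Taylor expansion \eqref{Taylorexpansion} produces the Gaussian exponents $e^{-\tau u - u^2/2}$ and $e^{\tau v + v^2/2}$; the inner rescalings $u \mapsto u/\sqrt N$, $v \mapsto v/\sqrt N$ turn the outlier factors into $\prod_{j=1}^{m}(u + \sigma_j)$ and $\prod_{j=1}^{m}(v+\sigma_j)^{-1}$, and combining these with the Meijer G-function limits of $\Psi$ and $\Phi$ reproduces exactly the generalised multiple functions $\Gamma^{(m+1)}$ and $\widetilde\Gamma^{(m)}$ of \eqref{typeIIpearceylikefunction}--\eqref{typeIpearceylikefunction}; the Stirling/exponential book-keeping then assembles into $\Upsilon_N^{(\mathrm{cri})}$. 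For part (iii) we have $r=0$, so $\Psi$ and $\Phi$ reduce via \eqref{E0a}--\eqref{E1a}; as in the proof of Theorem \ref{supercriticalkernel}, we split the $v$-contour in \eqref{NE1} into $\mathcal C_1$ encircling the outlier poles and $\mathcal C_2$ encircling $-bN$. On $\mathcal C_1$ the factor $(1+v/(bN))^{-(N-m)}$ tends uniformly to $1$ so that dominated convergence yields $\widetilde{\mathcal L}^{(m)}$; on $\mathcal C_2$ the integrand is holomorphic inside and the contribution vanishes. The companion limit uses \eqref{NE2} and the analogous splitting of the $u$-integral to produce $\mathcal L^{(m+1)}$, after extracting the leading factor $(bN)^{N-m}$ from $e^{-u}(u+bN)^{N-m}$ on a suitably shifted contour.

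The main technical obstacle across all three regimes is uniform control of the $v$-integral on the portion of the contour where the naive asymptotic \eqref{phiasymtotics} of $\Phi$ breaks down and must be replaced by \eqref{2asyptotics1f1}: this is exactly the difficulty addressed by the exponentially small bound \eqref{I12term} in the proof of Theorem \ref{noncriticalkernel}, which transplants to the critical setting with the corresponding degenerate saddle and to the supercritical setting as the no-pole argument on $\mathcal C_2$. Once this uniform control is in place in each regime, the remainder is routine: Laplace-method asymptotics at the (possibly degenerate) saddle, together with Stirling's formula for the explicit prefactors $\prod_{l=1}^{r}\Gamma(\nu_l+N)$ appearing in Proposition \ref{averageproduct}, pin down the exact normalisations $\Upsilon_N^{(\mathrm{sub})}$, $\Upsilon_N^{(\mathrm{cri})}$, $\Upsilon_N^{(\mathrm{sup})}$.
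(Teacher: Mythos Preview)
Your proposal is correct and follows exactly the approach of the paper: the paper's own proof is the one-sentence sketch ``By Proposition \ref{averageproduct}, following almost the same procedure as that in Theorems \ref{noncriticalkernel}, \ref{deformedcriticalkernel} and \ref{supercriticalkernel} we can evaluate the scaling limits \ldots\ the proof will be simpler since it only involves a single variable integral,'' and you have fleshed out precisely this strategy. One small wording slip: in part (iii) for the polynomial side \eqref{NE2} the $u$-integral is over $(0,\infty)$, not a contour, so there is no ``splitting'' --- rather, since $b>1$ the function $u\mapsto -u+(N-m)\log(u+bN)$ is decreasing on $(0,\infty)$, so the integral is dominated by $u=O(1)$ and one may pass to the limit $(1+u/(bN))^{N-m}\to e^{u/b}$ directly under the integral sign.
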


\begin{proof} By Proposition \ref{averageproduct}, following almost the same   procedure as that  in  Theorems \ref{noncriticalkernel}, \ref{deformedcriticalkernel} and \ref{supercriticalkernel} we can evaluate the scaling limits. As a matter of fact, the proof will be simpler since it only involves a single variable integral. We omit the  details.\end{proof}

Let us conclude this section with two  relationships between  the limiting kernels (cf. \eqref{deformedcriticalk} and \eqref{supercriticalk}) and   the generalised multiple functions defined by \eqref{typeIIpearceylikefunction}--\eqref{typeIlaguerrelikefunction};   cf.~Corollary \ref{pdfquasi}.


\begin{prop} We have
\be {\mathcal K}^{{\rm h},r}_{m}(x,y;\tau,\sigma)={\mathcal K}^{{\rm h},r}(x,y;\tau)+\sum_{k=1}^m \Gamma^{(k)}(x;\sigma_1,\ldots,\sigma_{k-1})\,\widetilde{\Gamma}^{(k)}(y;\sigma_1,\ldots,\sigma_{k}),\ee
and \be {\widetilde{\mathcal K}}^{{\rm h},r}_{m}(x,y;\kappa;\sigma)= \sum_{k=1}^m \mathcal{L}^{(k)}(x;\kappa;\sigma_1,\ldots,\sigma_{k-1})\,\widetilde{\mathcal{L}}^{(k)}(y;\kappa;\sigma_1,\ldots,\sigma_{k}).\ee
\end{prop}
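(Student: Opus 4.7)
The plan is to derive both identities from a single source: the partial-fraction identity
$$\frac{1}{u-v}\prod_{l=1}^{m}\frac{u+\sigma_l}{v+\sigma_l}=\frac{1}{u-v}+\sum_{k=1}^m \frac{\prod_{l=1}^{k-1}(u+\sigma_l)}{\prod_{l=1}^{k}(v+\sigma_l)},$$
which is precisely \eqref{identicalrelation} with $\{a_l\}$ specialised to $\{\sigma_l\}$. Inserting this into the double integral representations \eqref{deformedcriticalk} and \eqref{supercriticalk} separates each of the target kernels into one ``diagonal'' piece coming from $\tfrac{1}{u-v}$ and $m$ rank-one pieces in which the $u$- and $v$-dependencies factorise; the factorised pieces should then match the definitions \eqref{typeIIpearceylikefunction}--\eqref{typeIlaguerrelikefunction} essentially term by term.

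For the first identity I would start from \eqref{deformedcriticalk} and apply the identity above. In the rank-one term indexed by $k$, the $u$-integral reads
$$\int_{0}^{\infty}\!du\, u^{\nu_0}e^{-\tau u-\tfrac12 u^2}\,G^{1,0}_{0,r+2}\!\!\left(\genfrac{}{}{0pt}{}{\phantom{-}}{0,-\nu_0,\dots,-\nu_r}\Big|u\xi\right)\prod_{j=1}^{k-1}(u+\sigma_j),$$
which is exactly $\Gamma^{(k)}(\xi;\sigma_1,\dots,\sigma_{k-1})$. The $v$-integral, currently on $-c+i\mathbb{R}$ with $0<c<\min\sigma_j$, I would then shift rightwards to the imaginary axis $i\mathbb{R}$; since the only poles of the $v$-integrand in $\{-c\le\mathrm{Re}(v)\le 0\}$ are at $v=-\sigma_j$, which lie strictly to the left of $-c$, no residues are crossed, and what remains is $\widetilde\Gamma^{(k)}(\eta;\sigma_1,\dots,\sigma_k)$. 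Exactly the same contour shift applied to the diagonal $\tfrac{1}{u-v}$ piece converts $-c+i\mathbb{R}$ into $i\mathbb{R}$ without crossing the pole $v=u$ (since $u>0$), yielding ${\mathcal K}^{{\rm h},r}(\xi,\eta;\tau)$ in the form \eqref{criticalk}.

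For the second identity I would apply the same algebraic split inside \eqref{supercriticalk}, but now the $v$-contour is the \emph{closed} curve $\mathcal{C}$ that encircles $-\sigma_1,\dots,-\sigma_m$ while excluding $u>0$. The key observation is that $v\mapsto \Phi(v/\kappa;\kappa\eta)$ is entire — this follows from the definition \eqref{function4} together with the entirety of ${}_1F_1(\nu_0+s;\nu_0+1;-v)$ in $v$ — so the only possible singularity of the diagonal-piece integrand inside $\mathcal{C}$ would be $v=u$, which is explicitly excluded by the contour. Hence Cauchy's theorem kills the diagonal piece altogether, matching the absence of a kernel term on the right-hand side. For the $k$-th rank-one piece, the $u$-integral reproduces $\mathcal{L}^{(k)}(\xi;\kappa;\sigma_1,\dots,\sigma_{k-1})$ immediately, while the $v$-integral is a sum of the residues at $v=-\sigma_1,\dots,-\sigma_k$ of $e^{v}\Phi(v/\kappa;\kappa\eta)/\prod_{l=1}^k(v+\sigma_l)$, which is exactly $\widetilde{\mathcal L}^{(k)}(\eta;\kappa;\sigma_1,\dots,\sigma_k)$ as given by \eqref{typeIlaguerrelikefunction}.

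The main obstacle is the contour-shift justification in the first identity: one must argue that the vertical arcs $\{\mathrm{Re}(v)=t,\ |\mathrm{Im}(v)|=R\}$, $t\in[-c,0]$, contribute nothing in the limit $R\to\infty$. The Gaussian factor $e^{\tau v+v^2/2}$ actually \emph{grows} along $i\mathbb{R}$, so one must use the decay of $v^{-\nu_0}\,G^{r+1,0}_{0,r+2}(\cdots|v\eta)\cdot\prod(v+\sigma_j)^{-1}$ together with the oscillatory cancellation of $e^{v^2/2}$ on vertical lines; this is the standard contour-of-steepest-descent argument that has already been used within the paper (cf.~the proof of Theorem \ref{criticalkernel}), and I would invoke the same estimates. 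A secondary subtlety is the branch of $v^{-\nu_0}$ for non-integer $\nu_0$: taking the principal cut along $(-\infty,0)$ keeps the factor single-valued throughout the strip $-c\le\mathrm{Re}(v)\le 0$ (indented if necessary around $v=0$), so the shift remains legitimate.
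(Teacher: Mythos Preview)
Your proposal is correct and follows essentially the same route as the paper: both rely on the partial-fraction identity \eqref{identicalrelation} inserted into the double contour integrals, with the vanishing of the diagonal piece in the second formula (the paper phrases this as ${\widetilde{\mathcal K}}^{{\rm h},r}_{0}=0$, you phrase it as Cauchy's theorem on the closed contour $\mathcal C$).

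One small correction to your discussion of the ``main obstacle'': the Gaussian factor $e^{\tau v+v^2/2}$ does \emph{not} grow along vertical lines. For $v=t+is$ one has $|e^{v^2/2}|=e^{(t^2-s^2)/2}$, which decays like $e^{-s^2/2}$ as $|s|\to\infty$ uniformly for $t\in[-c,0]$, and $|e^{\tau v}|=e^{\tau t}$ is bounded on the strip. So the contour shift from $-c+i\mathbb{R}$ to $i\mathbb{R}$ is in fact entirely straightforward---no oscillatory cancellation or steepest-descent argument is needed, and your concern is unwarranted.
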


\begin{proof}  By use of the relation \eqref{identicalrelation},  noting the definition of involved functions \eqref{typeIIpearceylikefunction}--\eqref{typeIlaguerrelikefunction},  term-by-term integration immediately implies the above two formulas. Here use has been made of   ${\widetilde{\mathcal K}}^{{\rm h},r}_{0}(x,y;\kappa;\sigma)=0$  for the second formula.   \end{proof}

 \section{Product with truncated unitary matrices} \label{sectiontruncatedunitary}

The derivation  of the double contour integral expression \eqref{kernelCD} for the correlation kernel  is expected to be applicable to a wider class of biorthogonal ensembles, specifically to those  characterized by  the form of \eqref{chGUEsourcepdf} with $\eta_i(x)=x^{i-1}$ and $\xi_i(x)=h(a_i, x)$ for some appropriate   function of two variables $h$ and $N$ generic parameters $a_1, \ldots, a_N$.    In this section we consider the specific case of the biorthogonal ensemble corresponding to the
product of $r$ truncated unitary matrices and one shifted mean Ginibre matrix and derive a double integral representation of the correlation kernel and  analyze the scaled limits at the hard edge. Other types of products $X_r \cdots X_1 Z$, where each $X_j$ is a Ginibre or truncated unitary matrix while $Z$ is a spiked Wishart matrix of the form $G_0 \Sigma $  or a triangular random matrix
(cf.~\cite{Ch14,FW15}), are presently under consideration \cite{liu15}.

Explicitly, instead of (\ref{Y}), we now consider the matrix product
 \begin{equation}\label{Yunitary}
 Y = T_r \cdots T_1 (G_0 + A),
 \end{equation}
where each $T_j$ is an $(N+\nu_j)\times (N+\nu_{j-1})$  truncation of  a Haar distributed    unitary matrix  of size $M_j\times M_j$ and $G_0$ is an $(N+\nu_0)\times N$ standard complex
Gaussian matrix while $A$ is of size $(N + \nu_0) \times N$ and
fixed. Here $\nu_{-1}=0$, $\nu_0, \ldots, \nu_r$ are the nonnegative integers and $\mu_j:=M_j-N>\nu_j$ (for the general $\nu_j>-1$  the analysis below is also  applicable).  In the case that the
matrix $ (G_0 + A)$ is absent, this product has been studied in a recent paper \cite{KKS15}.

An analogue of Proposition \ref{pdf} for the correlation kernel can be given. As in
Proposition \ref{pdf}, two auxiliary functions are required, and so as to stress the structural similarities, analogous notation is used.
Specifically, with $r=1,2,\ldots,$ and $0\leq q \leq r$,  the first is defined to be
 \begin{multline}\Psi_{q}(u;x)= \frac{1}{(2\pi i)^{r}}    {1 \over \Gamma(\nu_0+1)} \int_{(0,\infty)^{q}}dt  \int_{\Gamma}d w \prod_{l=1}^q  t_{l}^{\mu_l}e^{-t_l} \\ \times
\prod_{l=1}^r   w_{l}^{-\nu_l-1}e^{w_l } \exp\Big\{x\frac{t_1\cdots t_q}{w_1\cdots w_r}\Big\}\,
   {}_0F_1\Big(\nu_0+1;- xu\frac{  t_1\cdots t_q}{w_1\cdots w_r}\Big),\label{function4.1}\end{multline}
where $\Gamma=\gamma_1 \times \cdots \times \gamma_r$, and $\gamma_1, \ldots, \gamma_r$ are   paths  starting   and ending at $-\infty$ and  encircling  the origin anticlockwise, while the other reads
  \be \Phi_{q}(v;y)= \frac{1}{2\pi i} \int_{c-i\infty}^{c+i\infty} ds\, y^{-s}  \phi(v;s)  \prod_{l=1}^{r} \Gamma(\nu_l+s) \prod_{l=1}^{q} \frac{1}{\Gamma(\mu_l+s)},\label{function4.2} \ee
 where $\phi(v;s)$ is given in \eqref{function1} and   $c>-\min\{\nu_0, \nu_1, \ldots, \nu_r\}$.  It is worth stressing that $\Psi_{q}(u;x)$ can be expressed as a single contour integral
 \begin{multline}\Psi_{q}(u;x)= {1 \over \Gamma(\nu_0+1)} \frac{1}{ 2\pi i }        \int_{\gamma}d w \,(-x)^{-w} {}_1F_1\big(w;\nu_0+1;u\big) \\ \times \Gamma(w)
 \prod_{l=1}^q  \Gamma(\mu_l+1-w)  \prod_{l=1}^{r} \frac{1}{\Gamma(\nu_l+1-w)}
   ,\label{function4.1-1}\end{multline}
 where $\gamma$    encircles all non-positive integers   such that
 $\text{Re}\{w\}<\min\{\mu_1 +1,\ldots,\mu_q +1\}$ for any $w\in \gamma$.
 This  is   a nice analogue of the definition of the function $\Phi_{q}(v;y)$ and can be derived as follows. First, the power series expansions for  the two functions  $e^{x}$ and  ${}_0F_1\big(\nu_0+1;- xu\big)$ give us the following  relation
 \be e^x {}_0F_1\big(\nu_0+1;- xu\big)=\sum_{k=0}^{\infty}\frac{1}{k!}\, x^k {}_1F_1\big(-k;\nu_0+1;u\big),\ee
from which,  together with the definition of the function \eqref{function4.1}, by term-by-term integration  we then read off
\begin{multline}\Psi_{q}(u;x) =     {1 \over \Gamma(\nu_0+1)} \sum_{k=0}^{\infty}\frac{1}{k!}\, x^k {}_1F_1\big(-k;\nu_0+1;u\big) \\
 \times \prod_{l=1}^q  \Gamma(\mu_l+1+k)  \prod_{l=1}^{r} \frac{1}{\Gamma(\nu_l+1+k)}.  \end{multline}
With this, noting that the integrand for the integral  \eqref{function4.1-1} has simple poles at $0, -1, -2, \ldots$, we thereby apply the residue theorem to  get the desired result.

 \begin{prop}\label{pdfunitary} Let   $Y$ be  defined by   (\ref{Yunitary}), and suppose that all   eigenvalues  $a_1, \ldots, a_N$  of  $A^*A$  are positive.  The eigenvalue PDF of $Y^*Y$ can be
 written as
 \be \mathcal{P}_{N}(x_1,\ldots,x_N)=\frac{1}{N!} \det[K_N(x_i,x_j)]_{i,j=1}^{N}
 \ee
with correlation kernel
 \be
K_N(x,y)=\frac{1}{2\pi i}\int_{0}^{\infty} du \int_{\mathcal{C}} dv \, u^{\nu_0}e^{-u+v} \Psi_r(u;x)\Phi_r(v;y)\frac{1}{u-v}\prod_{l=1}^{N}\frac{u+a_l}{v+a_l}, \label{kernelCDunitary}\ee
where $\mathcal{C}$ is a counterclockwise contour encircling $-a_1,\ldots, -a_N$ but not $u$.
 \end{prop}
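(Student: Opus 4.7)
The plan is to mirror the proof of Proposition \ref{pdf} step by step, replacing each Ginibre transformation by its truncated-unitary analogue. First I would establish the analogue of equation (\ref{B}): the joint eigenvalue PDF of $Y^*Y$ takes the biorthogonal form $\mathcal{P}_N(x_1,\ldots,x_N) = \frac{1}{Z_N}\det[\eta_i(x_j)]\det[\xi_i(x_j)]$ with $\eta_i(x) = x^{i-1}$ and $\xi_j(x) = \Phi_r(-a_j; x)$. This comes from starting with the $r=0$ biorthogonal ensemble (\ref{chGUEsourcepdf}) and iterating a truncated-unitary analogue of Proposition \ref{P5} (available in \cite{KKS15}); each truncation $T_l$ contributes a factor $\Gamma(\nu_l+s)/\Gamma(\mu_l+s)$ to the Mellin transform of $\xi_j$, which is precisely what appears in the definition (\ref{function4.2}) of $\Phi_r$. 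Taking the inverse Mellin transform then gives $\xi_j(x) = \Phi_r(-a_j;x)$.

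Next I would apply Proposition \ref{PB} to write $K_N(x,y) = \sum_{i,j} c_{i,j}\eta_i(x)\xi_j(y)$ with $[c_{i,j}]^t = [g_{i,j}]^{-1}$ and $g_{i,j} = \int_0^\infty \eta_i \xi_j\,dx$. Applying the Mellin inversion identity and the Laguerre representation (\ref{laguerreformula1}) yields
\begin{equation*}
g_{i,j} = (i-1)!\, e^{a_j} L_{i-1}^{\nu_0}(-a_j) \prod_{l=1}^{r}\frac{\Gamma(\nu_l+i)}{\Gamma(\mu_l+i)}.
\end{equation*}
Assuming the $a_j$ are pairwise distinct (the general case follows by L'Hospital's rule, cf.~the remark after \eqref{T}), the matrix inversion argument used to obtain (\ref{csumidentity}) carries over verbatim and produces the polynomial interpolation identity
\begin{equation*}
\sum_{i=1}^N (i-1)!\, L_{i-1}^{\nu_0}(u) \prod_{l=1}^{r}\frac{\Gamma(\nu_l+i)}{\Gamma(\mu_l+i)}\, c_{i,j} = e^{-a_j}\prod_{l\neq j}\frac{-u-a_l}{a_j-a_l}.
\end{equation*}

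To convert this to a double contour integral I would absorb the inverse ratio $\prod_l \Gamma(\mu_l+i)/\Gamma(\nu_l+i)$ into the monomial $x^{i-1}$ via the two integral representations
\begin{equation*}
\Gamma(\mu_l+i) = \int_0^\infty t_l^{\mu_l+i-1}e^{-t_l}\,dt_l,\qquad \frac{1}{\Gamma(\nu_l+i)} = \frac{1}{2\pi i}\int_{\gamma_l} w_l^{-\nu_l-i}e^{w_l}\,dw_l.
\end{equation*}
A short rearrangement shows that $x^{i-1}\prod_l \Gamma(\mu_l+i)/\Gamma(\nu_l+i)$ equals a $2r$-fold integral of $\prod_l t_l^{\mu_l}w_l^{-\nu_l-1}e^{w_l-t_l}$ against $z^{i-1}$, where $z = xt_1\cdots t_r/(w_1\cdots w_r)$. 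Inserting the dual Laguerre identity (\ref{laguerreformula2}) to express $z^{i-1}$ as an integral against $u^{\nu_0}L_{i-1}^{\nu_0}(u)e^{-u}{}_0F_1(\nu_0+1;-uz)$ and then interchanging sum and integrals, the interpolation identity above collapses the sum over $i$ and the remaining integrand is exactly $u^{\nu_0}e^{-u}\,\Psi_r(u;x)$ in view of (\ref{function4.1}).

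Finally, as in the proof of Proposition \ref{pdf}, I would recognise
\begin{equation*}
\sum_{j=1}^N \Phi_r(-a_j;y)\,e^{-a_j}\prod_{l\neq j}\frac{-u-a_l}{a_j-a_l}
\end{equation*}
as the sum of residues at $v = a_l$ of $\Phi_r(-v;y)\,(-u-v)^{-1}\prod_l(-u-a_l)/(v-a_l)$, and apply the residue theorem followed by $v\mapsto -v$ to obtain (\ref{kernelCDunitary}). I expect the main obstacle to lie in justifying the interchange of sums, contour integrals and real integrals: the integrand $\prod_l t_l^{\mu_l}w_l^{-\nu_l-1}e^{w_l-t_l}$ mixes unbounded real integrals against Hankel contours, and care is needed in choosing the contour $\gamma_l$ so that the $t_l$-integrals and the sum over $i$ both converge. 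The alternative single-contour formula (\ref{function4.1-1}) for $\Psi_q$, derived in the paper via the power series for $e^x{}_0F_1$, provides the control needed to verify these interchanges rigorously.
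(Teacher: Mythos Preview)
Your proposal is correct and follows essentially the same route as the paper's proof: iterate the truncated-unitary transformation from \cite{KKS15} on (\ref{chGUEsourcepdf}) to obtain the biorthogonal structure with $\xi_j$ identified via the Mellin transform as (a constant multiple of) $\Phi_r(-a_j;\,\cdot\,)$, compute $g_{i,j}$ using (\ref{laguerreformula1}), derive the interpolation identity, and then absorb the Gamma ratios into $x^{i-1}$ via the Euler and Hankel integral representations before invoking (\ref{laguerreformula2}) and the residue theorem. The only cosmetic difference is that the paper carries the constant $\prod_l \Gamma(\mu_l-\nu_l)$ explicitly (writing $g_{i,j}$ with Beta functions $B(\nu_l+i,\mu_l-\nu_l)$ rather than your bare ratio $\Gamma(\nu_l+i)/\Gamma(\mu_l+i)$), which of course cancels upon inversion.
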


\begin{proof} Starting with the eigenvalue PDF \eqref{chGUEsourcepdf} of $(G_0+A)^{*}(G_0+A)$, application of \cite[Corollary 2.4]{KKS15}  $r$ times in succession shows that the eigenvalue  PDF of $Y^*Y$ is proportional to
 \begin{equation}    \det[\eta_i(x_j)]_{i,j=1}^{N}\det[\xi_i(x_j)]_{i,j=1}^{N}, \label{Biunitary}\ee
where $\eta_i(x)=x^{i-1}$ and   with $T=t_1 \cdots t_r$
 \be \label{Tunitary} \xi_i(x)=\frac{1}{\Gamma(\nu_0+1)}\int_{(0,1)^r} d t   \, \prod_{l=1}^{r}  t_{l}^{\nu_l-1} (1-t_{l})^{\mu_l-\nu_l-1} \,  (\frac{y}{T})^{\nu_0} e^{-\frac{y}{T}}   {}_0F_1(\nu_0+1;a_i\frac{x}{T}). \end{equation}

Next, we proceed as in the proof of Proposition \ref{pdf}. Our  first task is to compute $g_{i,j} :=  \int_0^\infty \eta_i(x) \xi_j(x) \, dx$. For this purpose, we note that application of the Mellin transform
gives
  \be \int_{0}^{\infty} y^{s-1}\xi_j(y) \, dy=\phi(-a_j;s) \prod_{l=1}^{r} B(\nu_l+s,\mu_l-\nu_l),\label{mellin1unitary}\ee
where  the notation $B(a,b)$ refers to the gamma function evaluation of the beta integral and $\phi(v;s)$ is given in \eqref{function1},  while use of the inverse Mellin transform gives
  \be  \xi_j(y)=\Phi_r(-a_j;y)\prod_{l=1}^{r} \Gamma(\mu_l-\nu_l), \ee
where $\Phi_r$ is defined in \eqref{function4.2} with $q=r$.
Combining  \eqref{mellin1unitary}, \eqref{function2} and \eqref{laguerreformula1}, we obtain
\be g_{i,j}=(i-1)!e^{a_j}L_{i-1}^{\nu_0}(-a_j)\prod_{l=1}^{r} B(\nu_l+i,\mu_l-\nu_l).\label{gLunitary} \ee

According to Proposition \ref{PB}, with $G=[g_{i,j}]_{i,j=1}^N$ and $C=(G^{-1})^{t}$, the entries $c_{i,j}$ of $C$ then satisfy
\be \label{bvunitary} e^{a_k}\sum_{i=1}^{N}(i-1)!\, L_{i-1}^{\nu_0}(-a_k)\prod_{l=1}^{r} B(\nu_l+i,\mu_l-\nu_l) \, c_{i,j}=\delta_{j,k}.\ee
Without loss of generality we assume that $a_1, \ldots, a_N$ are pairwise distinct. The above equations imply
 \be \sum_{i=1}^{N}(i-1)!\,L_{i-1}^{\nu_0}(u) \prod_{l=1}^{r} B(\nu_l+i,\mu_l-\nu_l)\, c_{i,j}=e^{-a_j}\prod_{l=1,l\neq j}^{N} \frac{-u-a_l}{a_j-a_l}, \label{csumidentityunitary}\ee
which can be verified by noting that both sides are polynomials of degree $N-1$ in $u$ which are equal at $N$ different points. Using this implicit formula for $\{c_{i,j}\}$ and    the  integral representations
  \begin{equation}\label{intrepgamma1unitary}
  \Gamma(z)  =   \int_{0}^{\infty} t^{z-1}e^{-t} dt,  \qquad \frac{1}{\Gamma(z)} = \frac{1}{2\pi i} \int_{\gamma} w^{-z}e^{w} dw,
  \end{equation}
    we have from  \eqref{7.eb2'}  that with $T=t_1\cdots t_r$ and $W=w_1\cdots w_r$
  \begin{align}&K_N(x,y) = \frac{1}{(2\pi i)^{r}}\sum_{i,j=1}^{N}   \xi_j(y)\prod_{l=1}^{r} \frac{1}{\Gamma(\mu_l-\nu_l)}  \int_{(0,\infty)^{r}}dt  \int_{\Gamma}d w   \big(xT/W\big)^{i-1}\times \nonumber \\
  & \, \prod_{l=1}^r \big( t_{l}^{\mu_l}    w_{l}^{-\nu_l-1}e^{w_l-t_l }\big)  \prod_{l=1}^{r} B(\nu_l+i,\mu_l-\nu_l) \, c_{i,j} \nonumber \\
  &= \frac{1}{(2\pi i)^{r}}\sum_{j=1}^{N}   \Phi_r(-a_j;y)  \int_{(0,\infty)^{r}}dt  \int_{\Gamma}d w
 \prod_{l=1}^r \big(t_{l}^{\mu_l} w_{l}^{-\nu_l-1}e^{w_l-t_l }\big)     \frac{e^{ xT/W}}{\Gamma(\nu_0+1)} \,\times
   \nonumber\\
  & \,  \sum_{i=1}^{N} (i-1)! \prod_{l=1}^{r} B(\nu_l+i,\mu_l-\nu_l)\, c_{i,j}
 \int_{0}^{\infty} du \, u^{\nu_0}L_{i-1}^{\nu_0}(u)e^{-u}{}_0F_1\big(\nu_0+1;- uxT/W\big) \nonumber  \\
  &= \frac{1}{(2\pi i)^{r}}\sum_{j=1}^{N}   \Phi_r(-a_j;y)  \int_{(0,\infty)^{r}}dt  \int_{\Gamma}d w
 \prod_{l=1}^r \big(t_{l}^{\mu_l} w_{l}^{-\nu_l-1}e^{w_l-t_l }\big)     \frac{e^{ xT/W}}{\Gamma(\nu_0+1)} \,\times
   \nonumber\\
  &   \,
 \int_{0}^{\infty} du \, u^{\nu_0} e^{-u}{}_0F_1\big(\nu_0+1;-uxT/W\big)\,e^{-a_j}\prod_{l\neq j} \frac{-u-a_l}{a_j-a_l}. \label{2.23unitary}
  \end{align}
  Here the formulae \eqref{laguerreformula2} and \eqref{csumidentityunitary} have been used in the second and third equalities respectively.

  Finally, recalling   \eqref{function4.1}  we  can  rewrite (\ref{2.23unitary}) as
\be
  K_N(x,y) = \int_0^\infty du \, u^{\nu_0} e^{-u} \Psi_r(u;x) \sum_{j=1}^N \Phi_r(-a_j;y)\, e^{-a_j} \prod_{l\ne j} {- u - a_j \over a_j - a_l}.
\ee
 If we recognise the sum over $j$ as the sum of the residues at $\{a_l\}$ of the $v$- function
\be \Phi_r(-v;y)\frac{1}{-u-v}\prod_{l=1}^{N}\frac{-u-a_l}{v-a_l}, \ee
by changing $v$ to $-v$  we then arrive at the desired result.
\end{proof}

At this stage it would be possible to develop the theory of the corresponding averaged characteristic polynomials and their reciprocals,
and then proceed to analyse their hard edge limit; recall Sections \ref{sectionpolynomial} and \ref{limitpolynomial}.
However we pass on this, and instead
analyse the hard edge phase transition analogous to the workings in Section \ref{limitkernel}. Specifically,
taking $N\rightarrow \infty$, we keep all $\nu_j$ fixed and  simultaneously let some of  $\mu_1, \ldots, \mu_r$ go to $\infty$. Without loss of generality, we suppose that for some  $0\leq q\leq r$  all $\nu_1, \ldots, \nu_r, \mu_1, \ldots, \mu_q$ are  constants, and  moreover
\be    \mu_{q+1}, \ldots, \mu_r \rightarrow \infty  \quad \mbox{as}\quad  N\rightarrow \infty;\label{gotoinfinity} \ee
see \cite[Theorem 2.8]{KKS15} for the assumptions.

\begin{theorem}[Subcritical kernel]\label{noncriticalkernelunitary} With the   kernel  \eqref{kernelCDunitary},   for a fixed nonnegative  integer $m$  let
\be a_j=N \sigma_j, \,  j=1, \ldots, m \  \mbox{and} \  a_k=b N, \, k=m+1, \ldots, N,
 \label{confluentsourceunitary}\ee
 where   $0<b<1$ and $\sigma_1, \ldots, \sigma_m>0$. Set $c_N=(1-b)N\mu_{q+1}\cdots \mu_r$.   Under the assumption \eqref{gotoinfinity} we have
\begin{multline}
 \lim_{N\rightarrow \infty}\frac{1}{c_N}K_N\Big(\frac{\xi}{c_N},\frac{\eta}{c_N}\Big)= \\
\int_0^1   G^{1,q}_{q,r+2} \Big ({   -\mu_1,\dots,-\mu_q
 \atop 0,-\nu_0, \dots,-\nu_r} \Big | u\xi \Big ) G^{r+1,0}_{q,r+2} \Big ({ \mu_1,  \dots,\mu_q
 \atop \nu_0,  \dots,\nu_r,0} \Big | u \eta \Big ) \, du. \label{equationsubunitary}
 \end{multline}
  \end{theorem}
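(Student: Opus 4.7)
The plan is to mirror the proof of Theorem \ref{noncriticalkernel} for the Ginibre case, adapting the steepest descent analysis to accommodate the simultaneous limits $N\to\infty$ and $\mu_{q+1},\ldots,\mu_r\to\infty$. Starting from the double integral representation (\ref{kernelCDunitary}), I would substitute $x=\xi/c_N$, $y=\eta/c_N$ and rescale $u\mapsto Nu$, $v\mapsto Nv$. Since only the source structure $a_{m+1}=\cdots=a_N=bN$ enters the $N$-th power factor $\prod_l(u+a_l)/(v+a_l)$, the exponential part of the integrand is $e^{-N(f(u)-f(v))}$ with precisely the same $f(z)=z-\log(z+b)$ as in the Ginibre case, with saddle at $z_0=1-b$. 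Consequently the contour deformation of $\mathcal{C}$ into a large contour $\mathcal{C}_1$ through the saddle plus a small clockwise loop $\mathcal{C}_2^-$ encircling $[0,1-b]$ (but not $-b$) proceeds verbatim, splitting the rescaled kernel into $I_1+I_2$. The integral $I_1$ is shown to be $O(1/\sqrt{N})$ (or exponentially small on the part of $\mathcal{C}_1$ with $\mathrm{Re}\,v<0$) by the same steepest descent argument once one checks, using the Mellin--Barnes form (\ref{function4.2}) of $\Phi_r$ together with (\ref{1asyptotics1f1})--(\ref{2asyptotics1f1}), that neither $\Psi_r$ nor $\Phi_r$ contributes faster than polynomial growth along the descent paths.

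The core new work is the evaluation of $I_2$. Picking up the residue at $v=u$ for $0<u<1-b$ gives
\begin{equation*}
I_2=\frac{1}{(1-b)\mu_{q+1}\cdots\mu_r}\int_0^{1-b}du\,(Nu)^{\nu_0}\Psi_r(Nu;\xi/c_N)\,\Phi_r(Nu;\eta/c_N),
\end{equation*}
and the task is to identify the pointwise limit of the integrand. For $\Psi_r$ I would start from the multi-integral representation (\ref{function4.1}) with $q=r$; the factor $\exp(\xi T/(c_N W))$ tends to $1$ as $c_N\to\infty$. For each $l>q$, Laplace's method applied to the $t_l$-integral against the peaked weight $t_l^{\mu_l}e^{-t_l}$ gives
\begin{equation*}
\int_0^\infty t_l^{\mu_l}e^{-t_l}g(t_l)\,dt_l\sim\Gamma(\mu_l+1)\,g(\mu_l)\qquad(\mu_l\to\infty),
\end{equation*}
which pins $t_l$ at $\mu_l$. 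Since $c_N=(1-b)N\mu_{q+1}\cdots\mu_r$, the factors of $\mu_l$ in $T=t_1\cdots t_r$ cancel those in $c_N$, leaving the remaining multi-integral in $t_1,\ldots,t_q$ and $w_1,\ldots,w_r$ equal in the limit to $G^{1,q}_{q,r+2}\bigl({-\mu_1,\ldots,-\mu_q\atop 0,-\nu_0,\ldots,-\nu_r}\bigm|u\xi/(1-b)\bigr)$; this identification is verified by inserting the integral representations $\Gamma(\mu+1-s)=\int_0^\infty t^{\mu-s}e^{-t}\,dt$ and $1/\Gamma(\nu+1-s)=(2\pi i)^{-1}\int_\gamma w^{s-\nu-1}e^w\,dw$ into the Mellin--Barnes definition (\ref{Gfunction}) and appealing to the residue identity $(2\pi i)^{-1}\int\Gamma(s)y^{-s}ds=e^{-y}$. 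A parallel argument starting from (\ref{function4.2}) and combining the ${}_1F_1$ asymptotic (\ref{1asyptotics1f1}) with the Stirling ratio $\Gamma(\mu_l+s)/\Gamma(\mu_l)\sim\mu_l^s$ ($l>q$) produces the matching asymptotic
\begin{equation*}
(Nu)^{\nu_0}\Phi_r(Nu;\eta/c_N)\sim\prod_{l>q}\Gamma(\mu_l)^{-1}\,G^{r+1,0}_{q,r+2}\Bigl({\mu_1,\ldots,\mu_q\atop \nu_0,\nu_1,\ldots,\nu_r,0}\Bigm|\tfrac{u\eta}{1-b}\Bigr),
\end{equation*}
with the residual powers of $\mu_{q+1}\cdots\mu_r$ absorbing into the argument. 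Multiplying the two asymptotics, the ratios $\Gamma(\mu_l+1)/\Gamma(\mu_l)=\mu_l$ cancel the explicit prefactor $1/(\mu_{q+1}\cdots\mu_r)$ in $I_2$, and the change of variables $u\mapsto(1-b)u$ delivers (\ref{equationsubunitary}).

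The principal technical difficulty is the joint uniformity of these asymptotic expansions when the rates of $N\to\infty$ and $\mu_l\to\infty$ are not coupled. For the $\Phi_r$ estimate the Stirling approximation $\Gamma(\mu_l+s)\sim\mu_l^s\Gamma(\mu_l)$ must hold uniformly for $s$ in a vertical strip of the Mellin contour; this requires either truncating the contour to a bounded window plus a decaying tail estimate or, preferably, invoking a uniform complex Stirling bound along the lines used in \cite{KKS15}. A secondary subtlety is to verify that as the $\mu_l$ move off to infinity, the poles of $1/\Gamma(\mu_l+s)$ and $\Gamma(\mu_l+1-s)$ recede past the Mellin contour so that the limiting Meijer G-contour is admissible; this is a matter of choosing the initial contour in (\ref{function4.1-1}) to lie strictly to the left of $\min_l(\mu_l+1)$ throughout the limit. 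Once these uniformities are in hand, dominated convergence combined with the already established steepest descent bound on $I_1$ yields the claimed convergence uniformly on compact subsets of $(0,\infty)^2$.
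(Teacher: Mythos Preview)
Your proposal is correct and matches the paper's own proof closely: the paper also reduces to the same exponent $f(z)=z-\log(b+z)$ and saddle $z_0=1-b$, invokes the contour deformation and estimates of Theorem~\ref{noncriticalkernel} verbatim, treats $\Psi_r$ by Laplace/saddle-point on the $t_l$-integrals for $l>q$ (after the change $t_l\mapsto\mu_l t_l$ so the saddle sits at $t_0=1$, equivalent to your peak at $t_l=\mu_l$), and handles $\Phi_r$ by Stirling on the factors $1/\Gamma(\mu_l+s)$. The paper's proof is in fact terser than yours---it explicitly ``leave[s] the details to the reader''---so the uniformity issues you flag are exactly the gaps one would need to fill in either version.
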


 \begin{proof}   Substituting $u, v$  by  $uN, vN$ respectively  in \eqref{kernelCDunitary}, we obtain
 \begin{multline}   \frac{1}{c_N}K_N\Big(\frac{\xi}{c_N},\frac{\eta}{c_N}\Big)=\int_{0}^{\infty}du \int_{\mathcal{C}} dv \,\frac{N}{2c_N\pi i}
     \frac{e^{-N(f(u)-f(v))}}{u-v}
  \\  \qquad \times  \prod_{j=1}^{m}\frac{(v+b)(u+\sigma_j)}{(u+b)(v+\sigma_j)} \,
   (Nu)^{\nu_0} \Psi_r\big(Nu; \frac{\xi}{c_N}\big) \Phi_r\big(Nv; \frac{\eta}{c_N}\big),
    \label{rescalingkernelunitary}
    \end{multline}
     where
    $ f(z)=z-\log(b+z)$.

We can complete the proof  in much the  same way  as  in that of  Theorem \ref{noncriticalkernel}. But this time  we need to estimate the large $N$ leading terms of  the functions $\Psi$ and $\Phi$. That is, we  have to rescale variables $t_{q+1}, \ldots, t_r$  and rewrite them according to
 \begin{multline}\Psi_r\big(Nu; \frac{\xi}{c_N}\big)= \frac{1}{(2\pi i)^{r}}    {1 \over \Gamma(\nu_0+1)}
  \int_{(0,\infty)^{r}}dt  \int_{\Gamma}d w   \, \prod_{j=q+1}^{r} \big( \mu_{j}^{\mu_j+1} e^{ \mu_j(\log t_j-t_j)}\big)\, \times
 \\    \prod_{l=1}^q  t_{l}^{\mu_l}e^{-t_l}
    \prod_{l=1}^r   w_{l}^{-\nu_l-1}e^{w_l }\, \exp\Big\{\frac{\xi}{(1-b)N}\frac{t_1\cdots t_q}{w_1\cdots w_r}\Big\}\,
   {}_0F_1\Big(\nu_0+1;- \frac{\xi u}{1-b}\frac{  t_1\cdots t_q}{w_1\cdots w_r}\Big), \end{multline}
 and
  \begin{multline} \Phi_{r}(Nv; \frac{\eta}{c_N}) =   \frac{1}{2\pi i} \int_{c-i\infty}^{c+i\infty} ds\, \big(\frac{\eta}{(1-b)N}\big)^{-s}  \phi(Nv;s) \\  \times  \prod_{l=q+1}^{r} \frac{\mu_j}{\Gamma(\mu_j+s)}   \prod_{l=1}^{r} \Gamma(\nu_l+s) \prod_{l=1}^{q} \frac{1}{\Gamma(\mu_l+s)} ,  \end{multline}
 then apply the saddle point analysis (see e.g.  \cite{wong01}) to   the integrals over $t_{q+1}, \ldots, t_r$ in $\Psi_r$ near the saddle point  $t_0=1$, or expand the integrand in $\Phi_r$  by the Stirling approximation formula  as $\mu_{q+1}, \ldots, \mu_r \rightarrow \infty$. Tracking the same contour deformations  and following almost the same analysis as in Theorem \ref{noncriticalkernel}, the proof will be done. We leave the details to the reader.
   \end{proof}

The limiting kernel on the RHS of  \eqref{equationsubunitary}, with the parameter $\nu_0$ absent and $r+1$ replaced by $r$
 first appeared in \cite[Theorem 2.8]{KKS15} as the hard edge correlation kernel for a product of truncated unitary matrices. Clearly, it reduces to the Meijer G-kernel \eqref{r}   in case $q=0$. More generally, as remarked in \cite{KKS15} (cf.~eqns (2.37) and (2.38) therein), it can be interpreted as a finite rank perturbation of \eqref{r}.

 For the critical regime, tracking the same contour deformations  and following almost the same analysis as in Theorem  \ref{deformedcriticalkernel}, as for the proof of Theorem \ref{noncriticalkernelunitary}   the required working to establish the following   theorem  can be given.

\begin{theorem} [Deformed critical kernel]\label{deformedcriticalkernelunitary}
 With the kernel    \eqref{kernelCDunitary}, for a fixed nonnegative  integer $m$  let
\be a_j=\sqrt{N}\sigma_j, \,  j=1, \ldots, m \  \mbox{and} \  a_k=N(1- \tau/\sqrt{N})^{-1}, \, k=m+1, \ldots, N,
 \label{mconfluentsourceunitary}\ee
 where $\tau\in \mathbb{R}$ and $\sigma_1, \ldots, \sigma_m>0$.  Set $c_N=\sqrt{N}\mu_{q+1}\cdots \mu_r$. Under the assumption \eqref{gotoinfinity}     we have
\begin{multline}  \lim_{N\rightarrow \infty}\frac{1}{ c_N} K_N\Big(\frac{\xi}{c_N},\frac{\eta}{c_N}\Big)   =   \frac{1}{2\pi i}\int_{0}^{\infty}du \int_{ -c-i\infty}^{-c+i\infty} dv \ \Big (\frac{u}{v} \Big )^{\nu_0}\frac{e^{- \tau u-\frac{1}{2}u^2 +\tau v+\frac{1}{2}v^2}}{u-v}   \\   \times \, \prod_{j=1}^{m}\frac{u+\sigma_j}{v+\sigma_j}\,
 G^{1,q}_{q,r+2} \Big ({   -\mu_1,\dots,-\mu_q
 \atop 0,-\nu_0, \dots,-\nu_r} \Big | u\xi \Big ) G^{r+1,0}_{q,r+2} \Big ({ \mu_1,  \dots,\mu_q
 \atop \nu_0,  \dots,\nu_r,0} \Big | v \eta \Big ),
 \label{deformedcriticalkunitary}\end{multline}
where $0<c<\min\{\sigma_1,\ldots,\sigma_m\}$.
 \end{theorem}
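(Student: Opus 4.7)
My plan is to fuse the strategy of Theorem \ref{deformedcriticalkernel} (the Ginibre critical regime with finite-rank perturbation) with the saddle-point/Stirling treatment of the diverging truncation parameters $\mu_{q+1},\dots,\mu_r$ used in Theorem \ref{noncriticalkernelunitary}. Starting from the representation \eqref{kernelCDunitary}, I rescale $x=\xi/c_N$, $y=\eta/c_N$ and substitute $u\mapsto Nu$, $v\mapsto Nv$. The product over sources splits as $\prod_{j=1}^{m}(u+\sigma_j/\sqrt{N})/(v+\sigma_j/\sqrt{N})$ (coming from the finite-rank $a_j$'s) times $\big((b+u)/(b+v)\big)^{N-m}=\exp\{-N(f(u)-f(v))\}$ with $f(z)=z-\log(b+z)$ and $b=(1-\tau/\sqrt{N})^{-1}$. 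Taylor expansion at the saddle $z=0$ yields $Nf(z)=\sqrt{N}\tau z+\tfrac12(1-\tau/\sqrt{N})^{2}Nz^{2}+O(N^{-1/2}z^{3})$, so after a further rescaling $u,v\mapsto u/\sqrt{N},v/\sqrt{N}$ this produces the exponent $\tau v+\tfrac12 v^{2}-\tau u-\tfrac12 u^{2}$ and simultaneously converts $\sigma_j/\sqrt{N}$ into a finite $\sigma_j$, exactly as in the proof of Theorem \ref{deformedcriticalkernel}.

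The second step is to extract the large-$N$ limit of the two auxiliary functions $\Psi_r(Nu;\xi/c_N)$ and $\Phi_r(Nv;\eta/c_N)$. For $\Psi_r$ I use \eqref{function4.1}, rescale $t_{j}\mapsto\mu_{j}t_{j}$ for $j=q+1,\dots,r$ so that each such factor becomes $\mu_{j}^{\mu_{j}+1}e^{\mu_{j}(\log t_{j}-t_{j})}$, and apply the standard Laplace/saddle-point estimate at $t_{j}=1$; the resulting $\mu_{j}^{\mu_{j}}\sqrt{2\pi/\mu_{j}}\,e^{-\mu_{j}}$ factors combine with the $\mu_{q+1}\cdots\mu_{r}$ in $c_{N}=\sqrt{N}\mu_{q+1}\cdots\mu_{r}$ to leave behind exactly the missing Gamma ratios. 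Using \eqref{1asyptotics1f1} to handle the ${}_0F_1$ factor as in the proof of Theorem \ref{noncriticalkernel}, I obtain, uniformly on compact subsets of $\xi\in(0,\infty)$,
\[
(Nu)^{\nu_{0}}\,\Psi_{r}\!\Big(Nu;\tfrac{\xi}{c_{N}}\Big)\sim u^{\nu_{0}}\,G^{1,q}_{q,r+2}\!\Big({-\mu_{1},\dots,-\mu_{q}\atop 0,-\nu_{0},\dots,-\nu_{r}}\Big|u\xi\Big).
\]
The parallel analysis for $\Phi_{r}$ uses the Mellin-type integral \eqref{function4.2}: Stirling's formula $\Gamma(\mu_{j}+s)\sim \mu_{j}^{\mu_{j}+s-1/2}\sqrt{2\pi}\,e^{-\mu_{j}}$ for $j=q+1,\dots,r$ absorbs the remaining $\mu_{j}^{\mu_{j}}$ factors, producing $G^{r+1,0}_{q,r+2}({\mu_{1},\dots,\mu_{q}\atop \nu_{0},\dots,\nu_{r},0}|v\eta)$ after \eqref{2asyptotics1f1} is applied in the region $\textrm{Re}\,v<0$.

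Finally, the contour-deformation is identical to the one used in the proof of Theorem \ref{deformedcriticalkernel}: I deform $\mathcal{C}$ into a small circular arc around $z=0$ joined to two half-lines of the form $-c\pm i\mathbb{R}$ with $0<c<\min_{j}\sigma_{j}$ (ensuring all the poles $-\sigma_{j}/\sqrt{N}$ lie inside after the rescaling), split the $v$-integration into $\mathcal{C}_{+}=\{\textrm{Re}\,v>0\}$ and $\mathcal{C}_{-}=\{\textrm{Re}\,v<0\}$, and verify that the $\mathcal{C}_{-}$ contribution is exponentially small by the estimate $\textrm{Re}\,\log(1+v/b)>\log(1+\delta)$ valid uniformly for $\tau$ in compacts and $N$ large. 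The $\mathcal{C}_{+}$ contribution, after substituting the asymptotics derived in the previous paragraph and the rescaling $u,v\mapsto u/\sqrt{N},v/\sqrt{N}$, converges to the integral on the right-hand side of \eqref{deformedcriticalkunitary}. The chief obstacle is a bookkeeping one: one must verify that the saddle-point approximations for $\mu_{q+1},\dots,\mu_{r}\to\infty$ inside $\Psi_{r}$ and $\Phi_{r}$ are uniform in $(u,v)$ over the (unbounded) critical contour and can therefore be interchanged with the steepest-descent limit in $N$. Once this uniformity is in place--which follows from standard Watson-lemma-type estimates combined with the Gaussian decay supplied by $e^{-u^{2}/2}$ and $e^{v^{2}/2}$ along the contour--the remaining bounds are routine, and the theorem follows.
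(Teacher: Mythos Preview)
Your proposal is correct and follows essentially the same approach as the paper, which itself merely indicates that one should combine the contour deformation and Taylor expansion of Theorem~\ref{deformedcriticalkernel} with the saddle-point/Stirling treatment of the diverging $\mu_{q+1},\dots,\mu_r$ used in Theorem~\ref{noncriticalkernelunitary}. One small slip: the Meijer $G^{r+1,0}_{q,r+2}$ factor for $\Phi_r$ emerges on the dominant part of the contour (where $\mathrm{Re}\,v>0$) via the asymptotic \eqref{1asyptotics1f1}, not \eqref{2asyptotics1f1}; the latter is used only on $\mathcal{C}_-$ to show that contribution is exponentially negligible.
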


We remark that  the kernels  on the RHS of \eqref{deformedcriticalkunitary} reduce  to the deformed critical kernels ${\mathcal K}^{{\rm h},r}_{m}$ in \eqref{deformedcriticalk}   in case $q=0$. These are  the most general form of critical kernels   that we have  derived in the present paper. Moreover, they are new except for the simplest case $q=r=m=0$, which as previously remarked corresponds to non-intersecting squared Bessel paths and has been studied in  \cite{DV14,DF08,KFW11}.

As to the supercritical regime where $b>1$,  when $r\geq 1$ we have a similar expectation  on   the scaling limit (see   eqn.\eqref{supercriticalk} and relevant description below it), which can be stated as follows.
 With    the kernel    \eqref{kernelCDunitary}, for a fixed positive integer $m$  let
\be a_j=\sigma_j b/(b-1), \,  j=1, \ldots, m \  \mbox{and} \  a_k=bN, \, k=m+1, \ldots, N,
 \label{mconfluentsource2unitary}\ee
 where $b>1$ and $\sigma_1, \ldots, \sigma_m>0$. Set $c_N=  \mu_{q+1}\cdots \mu_r b/(b-1)$, then under the assumption \eqref{gotoinfinity}  we have
\begin{align}  \lim_{N\rightarrow \infty} \frac{1}{c_N} K_N\Big( \frac{\xi}{c_N},  \frac{\eta}{c_N}\Big)  =   & \frac{1}{2\pi i}\int_{0}^{\infty}du \int_{ \mathcal{C}}  dv \  \big(\frac{u}{\kappa}\big)^{\nu_0}\frac{e^{-   u + v}}{u-v} \nonumber \\ & \times \, \,  \Psi_q(u/\kappa;\kappa\xi)\,\Phi_q(v/\kappa;\kappa\eta) \prod_{j=1}^{m}\frac{u+\sigma_j}{v+\sigma_j}\,
 , \label{supercriticalkunitary}\end{align}
where $\kappa=(b-1)/b$, $\Psi_q, \Phi_q$ are given  by \eqref{function4.1}, \eqref{function4.2}, and   $\mathcal{C}$ is a counterclockwise contour encircling  $-\sigma_1, \ldots, -\sigma_m$  but not $u$.

 \section{Asymptotics for large parameters and variables} \label{sectiondiscussion}

 \subsection{Limits for large parameters}

The behavior of   the critical kernel  (\ref{criticalk}) for large values   of the  parameters will be discussed, one of which is the confluent relation  between correlation kernels.
The first to be considered is when some of $\nu_1,\ldots,\nu_r$, say  $\nu_{m+1},\ldots,\nu_r$, go to infinity.

\begin{prop} Let ${\mathcal K}^{{\rm h},r}(\xi,\eta;\tau)$ be the critical kernel \eqref{criticalk}. If $0\leq m<r$, then  as $\nu_{m+1},\ldots,\nu_r \rightarrow \infty$ we have  

\be  (\nu_{m+1} \cdots \nu_r ){\mathcal K}^{{\rm h},r}\big((\nu_{m+1} \cdots \nu_r) x, (\nu_{m+1} \cdots \nu_r) y;\tau\big)\longrightarrow {\mathcal K}^{{\rm h},m}(x,y;\tau).\ee \end{prop}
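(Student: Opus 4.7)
The plan is to substitute the Mellin--Barnes representation of each Meijer G-function in \eqref{criticalk}, apply the large-parameter asymptotic $\Gamma(\nu+a)/\Gamma(\nu+b) \sim \nu^{a-b}$ to the Gamma factors carrying the large indices, and observe that everything outside the G-functions stays the same under the rescaling $(\xi,\eta)\mapsto(Lx,Ly)$ with $L=\nu_{m+1}\cdots\nu_r$. Writing the two G-functions as
\begin{align*}
G^{1,0}_{0,r+2}\Big({\atop 0,-\nu_0,\dots,-\nu_r}\Big| Lux\Big) &= \frac{1}{2\pi i}\int_{\gamma} \frac{\Gamma(s)\,(Lux)^{-s}}{\prod_{j=0}^{r}\Gamma(\nu_j+1-s)}\,ds,\\
G^{r+1,0}_{0,r+2}\Big({\atop \nu_0,\dots,\nu_r,0}\Big| Lvy\Big) &= \frac{1}{2\pi i}\int_{\gamma'}\frac{\prod_{j=0}^{r}\Gamma(\nu_j+s')\,(Lvy)^{-s'}}{\Gamma(1-s')}\,ds',
\end{align*}
the key algebraic point is that the factors $L^{-s}=\prod_{j>m}\nu_j^{-s}$ and $L^{-s'}=\prod_{j>m}\nu_j^{-s'}$ are designed to cancel precisely the powers produced by Stirling.

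Applied to the indices $j>m$, Stirling yields $\Gamma(\nu_j+1-s)\sim\Gamma(\nu_j+1)\nu_j^{-s}$ and $\Gamma(\nu_j+s')\sim\Gamma(\nu_j)\nu_j^{s'}$, so in the first integral the $L^{-s}$ cancels $\prod_{j>m}\nu_j^{s}$ and leaves $1/\prod_{j>m}\Gamma(\nu_j+1)$, while in the second the $L^{-s'}$ cancels $\prod_{j>m}\nu_j^{s'}$ and leaves $\prod_{j>m}\Gamma(\nu_j)$. The ratio of these two constants equals $1/L$, which is precisely cancelled by the explicit prefactor $L$ in the statement. What remains under the two Mellin contours is exactly the pair of reduced G-functions
$G^{1,0}_{0,m+2}\big({\atop 0,-\nu_0,\dots,-\nu_m}\big| ux\big)$ and $G^{m+1,0}_{0,m+2}\big({\atop \nu_0,\dots,\nu_m,0}\big| vy\big)$, and since the external factor $(u/v)^{\nu_0}e^{-\tau u-u^2/2+\tau v+v^2/2}/(u-v)$ in \eqref{criticalk} does not depend on the $\nu_j$'s, the pointwise limit of the integrand is exactly the integrand defining $\mathcal K^{\mathrm{h},m}(x,y;\tau)$.

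The last step is to promote pointwise convergence of the integrand to convergence of the full quadruple integral (over $u\in(0,\infty)$, $v\in i\mathbb R$, and the two Mellin contours $\gamma,\gamma'$). For this I would take $\gamma,\gamma'$ to be vertical lines in fixed narrow strips, use the sharp Stirling bound $|\Gamma(\sigma+it)|\sim\sqrt{2\pi}|t|^{\sigma-1/2}e^{-\pi|t|/2}$ to obtain Gaussian-in-$|t|$ decay of each Mellin integrand, and combine this with the quadratic Gaussian factor in $u$ and $v$ to construct an integrable majorant uniform in $\nu_{m+1},\dots,\nu_r$. Dominated convergence then yields the claimed limit.

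The main obstacle is precisely this uniform domination: the asymptotic $\Gamma(\nu+a)/\Gamma(\nu+b) = \nu^{a-b}(1+O(1/\nu))$ is only locally uniform in $a,b$, and one has to check that the implied constants remain controlled as $|\mathrm{Im}\,s|,|\mathrm{Im}\,s'|\to\infty$ so that the tails of the Mellin contours do not produce a nonuniform contribution. Once the quantitative version of Stirling (with explicit remainder) is inserted and combined with the exponential decay coming from $|\Gamma(\sigma+it)|$, the bookkeeping is routine and no new analytic input is needed beyond what was already used in the proof of Theorem~\ref{criticalkernel}.
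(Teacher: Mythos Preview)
Your proposal is correct and follows essentially the same route as the paper, which dispatches the result in one line by citing the identity \eqref{fgrelation} for $G^{1,0}_{0,r+2}$ and the Mellin--Barnes definition \eqref{Gfunction} for $G^{r+1,0}_{0,r+2}$. The only minor difference is that you treat the first G-function via its contour integral as well, whereas the paper implicitly uses the ${}_0F_{r+1}$ series from \eqref{fgrelation}, for which term-by-term convergence of $(\nu_l+1)_k/\nu_l^k\to 1$ replaces your Stirling-and-domination argument; your discussion of the uniform majorant is more careful than anything the paper spells out, but no new idea is required.
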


\begin{proof} This immediately follows from the identity \eqref{fgrelation} for $G_{0,r+2}^{1,0}$ and the definition \eqref{Gfunction} for $G_{0,r+2}^{r+1,0}$. \end{proof}

The above confluent  relation allows  for a natural interpretation,  particularly in the original finite matrix dimension.  Actually, in \eqref{Y}  substituting all $G_j$ as square matrices being distributed according to  the joint density proportional  to
 $\det^{\nu_j}(G_j^* G_j)\exp\{-\textrm{tr}(G_j^* G_j)\}$ (see the  relevant description below \eqref{Y}), then a saddle point approximation shows that all $G_{m+1}, \ldots, G_r$ go to the identity matrix of order $N$ as $\nu_m, \ldots, \nu_r \rightarrow \infty$. Thus these matrices do not contribute to the hard edge state.

A similar effect happens in relation to the parameter $\nu_0$ associated with $G_0$, although now we find that a different rescaling is necessary,
and   furthermore that the limiting kernel is now subcritical.

\begin{prop} \label{largenu0} Let ${\mathcal K}^{{\rm h},r}(\xi,\eta;\tau)$ be the critical kernel \eqref{criticalk}. For $ r\geq 1$,   we have
\be \lim_{ \nu_0 \rightarrow \infty}   \sqrt{\nu_0}  {\mathcal K}^{{\rm h},r}\big(\sqrt{\nu_0} x, \sqrt{\nu_0} y;\tau\big) = {K}^{{\rm h},r-1}(x,y) \Big |_{\{\nu_0,\ldots,\nu_{r-1}\}\rightarrow \{\nu_1,\ldots,\nu_{r}\}},\ee
where $K^{{\rm h}, r-1}$ is given by (\ref{r}).\end{prop}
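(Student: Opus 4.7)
My plan is to work from the double-contour form \eqref{criticalk}. Setting $\xi=\sqrt{\nu_0}\,x,\ \eta=\sqrt{\nu_0}\,y$ and rescaling the dummy variables $u\mapsto u/\sqrt{\nu_0},\ v\mapsto v/\sqrt{\nu_0}$ brings the Meijer $G$-function arguments to the $O(1)$ quantities $ux,\ vy$, while the Jacobian together with $1/(u-v)$ absorbs the external $\sqrt{\nu_0}$, producing
\[
\sqrt{\nu_0}\,\mathcal{K}^{{\rm h},r}(\sqrt{\nu_0}x,\sqrt{\nu_0}y;\tau)=\frac{1}{2\pi i}\int_0^\infty\!du\int_{i\mathbb{R}}\!dv\,\Big(\frac{u}{v}\Big)^{\nu_0}\frac{e^{E_{\nu_0}(u,v)}}{u-v}G^{1,0}_{0,r+2}(ux)G^{r+1,0}_{0,r+2}(vy),
\]
with $E_{\nu_0}(u,v)=-\tau u/\sqrt{\nu_0}-u^2/(2\nu_0)+\tau v/\sqrt{\nu_0}+v^2/(2\nu_0)\to 0$ uniformly on compacta. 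In particular the $\tau$-dependence is asymptotically subleading, consistent with the $\tau$-independence of the claimed limit.

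Next I would insert the Mellin-Barnes representations of both Meijer $G$-functions and interchange orders of integration, converting the kernel into a quadruple integral over $(u,v,s_1,s_2)$. The heart of the argument is the large-$\nu_0$ asymptotic analysis of the inner $(u,v)$-integral
\[
I_{\nu_0}(s_1,s_2)=\int_0^\infty\!du\int_{i\mathbb{R}}\!dv\,\frac{u^{\nu_0-s_1}v^{-\nu_0-s_2}e^{E_{\nu_0}(u,v)}}{u-v}.
\]
The splitting $1/(u-v)=\int_0^\infty e^{-t(u-v)}dt$, valid since $u>0$ and $v\in i\mathbb{R}$, decouples $I_{\nu_0}$ into independent $u$- and $v$-integrals, each expressible in closed form in terms of parabolic cylinder functions (or treatable by Laplace's method around the saddles $u_0\sim\sqrt{\nu_0},\ v_0\sim -\sqrt{\nu_0}$ once the $v$-contour is deformed into the Hankel region where the Gaussian $e^{v^2/(2\nu_0)}$ decays). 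The resulting leading-order asymptotic is of the form $I_{\nu_0}(s_1,s_2)\sim \mathrm{const.}\cdot\Gamma(\nu_0-s_1+1)/[(s_1+s_2-1)\Gamma(\nu_0+s_2)]$, and when substituted back the ratio $\Gamma(\nu_0-s_1+1)/\Gamma(\nu_0+s_2)$ precisely cancels the $\nu_0$-dependent gamma factors in the Mellin prefactor, so the limit becomes $\nu_0$-independent.

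The remaining double Mellin-Barnes integral is identified with the target via the change of variables $s_1=-t,\ s_2=u+1$: the denominator $1/(s_1+s_2-1)$ becomes $1/(u-t)$, the powers $x^{-s_1}y^{-s_2}$ become $x^{t}y^{-u-1}$, and the reflection identity $\Gamma(z)\Gamma(1-z)=\pi/\sin\pi z$ gives
\[
\frac{\Gamma(-t)}{\Gamma(-u)}=\frac{\Gamma(u+1)}{\Gamma(t+1)}\cdot\frac{\sin\pi u}{\sin\pi t},
\]
reproducing exactly the first line of \eqref{r} with the parameters $\nu_0,\ldots,\nu_{r-1}$ replaced by $\nu_1,\ldots,\nu_r$. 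The main obstacle is the rigorous asymptotic analysis of $I_{\nu_0}(s_1,s_2)$: the relevant saddle points lie far outside the rescaled $O(1)$ regime, the Gaussian factor $e^{v^2/(2\nu_0)}$ both provides convergence and constrains admissible $v$-contour deformations, and the apparent $v^{-\nu_0-s_2}$ singularity at $v=0$ cancels only after resumming all the Mellin-Barnes residues of $G^{r+1,0}_{0,r+2}(vy)$. Uniform control in $(s_1,s_2)$ along the Mellin contours is required to justify the interchanges of integration and dominated convergence, after which uniform convergence in $(x,y,\tau)$ on compacta follows.
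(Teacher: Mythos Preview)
Your rescaling $u\mapsto u/\sqrt{\nu_0}$, $v\mapsto v/\sqrt{\nu_0}$ is the opposite of the paper's choice, and this is where the difficulty enters.  After your substitution the factor $(u/v)^{\nu_0}$ remains untouched while $E_{\nu_0}\to 0$; the entire mass of the integral is then carried by $(u/v)^{\nu_0}$, whose effective saddle in your coordinates sits at $u,v$ of order $\nu_0$, i.e.\ at infinity.  So the apparent gain of having the Meijer $G$-arguments at $O(1)$ is illusory: the dominant region is still at large argument.  Your subsequent Mellin--Barnes manipulation inherits this problem in the form of the divergent $v$-integral $\int_{i\mathbb{R}}v^{-\nu_0-s_2}e^{tv}\,dv$, which has a non-integrable singularity at $v=0$ for every $s_2$ on the relevant contour; the interchange of integrals that produces $I_{\nu_0}(s_1,s_2)$ is therefore unjustified, and the ``resumming of residues'' you invoke to cancel the singularity is exactly the step that would need a genuine argument.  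The formal asymptotic you quote for $I_{\nu_0}$ also does not follow from the Laplace splitting as written: the remaining $t$-integral $\int_0^\infty t^{s_1+s_2-2}\,dt$ is divergent, not equal to $1/(s_1+s_2-1)$.

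The paper proceeds by the complementary rescaling $u\mapsto \sqrt{\nu_0}\,u$, $v\mapsto \sqrt{\nu_0}\,v$.  Then $(u/v)^{\nu_0}$ combines with the Gaussian into a clean steepest-descent exponent $e^{-\nu_0(f(u)-f(v))}$ with $f(z)=-\log z+\tfrac12 z^2$ and saddle $z_0=1$, while the Meijer $G$-functions now have large argument $\nu_0 u x$, $\nu_0 v y$.  The point is that the large-argument/large-$\nu_0$ behaviour of these $G$-functions is elementary: the $\nu_0$ parameter simply disappears, $\nu_0\,G^{1,0}_{0,r+2}(\nu_0 ux)\,G^{r+1,0}_{0,r+2}(\nu_0 vy)\sim G^{1,0}_{0,r+1}(ux)\,G^{r,0}_{0,r+1}(vy)$.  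With the saddle at $1$ one deforms $i\mathbb{R}$ into a small clockwise loop around $[0,1)$ plus the line $\{\mathrm{Re}\,v=1\}$, exactly as in the proof of Theorem~\ref{noncriticalkernel}; the steepest-descent contribution from the line is negligible and the residue at $v=u$ over $u\in(0,1)$ yields directly the integral form of $K^{{\rm h},r-1}$.  Your route is not unworkable in principle, but to make it rigorous you would essentially have to rediscover this saddle structure inside the asymptotics of $I_{\nu_0}$; the paper's rescaling exposes it immediately.
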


\begin{proof} Substituting $u, v$ by $ \sqrt{\nu_0}  u$ and $\sqrt{\nu_0}  v$ respectively in \eqref{criticalk}, we get
 \begin{align}  \sqrt{\nu_0}  {\mathcal K}^{{\rm h},r} \big(&\sqrt{\nu_0} x, \sqrt{\nu_0} y ;\tau\big)   =   \frac{1}{2\pi i}\int_{0}^{\infty}du \int_{ i\mathbb{R}} dv \ \frac{e^{ -\nu_0(f(u)-f(v))} }{u-v} e^{- \tau  \sqrt{\nu_0} u + \tau  \sqrt{\nu_0} v}\nonumber \\  \times \, \nu_0
&G_{0,r+2}^{1,0}  \Big({\atop 0, -\nu_0, -\nu_1,\ldots,-\nu_r}\Big|\nu_0 u x\Big) G_{0,r+2}^{r+1,0} \Big({\atop \nu_0,  \nu_1,\ldots,\nu_r,0}\Big|\nu_0 vy\Big)
 \end{align}
 where $f(z)=  -\log z + z^2/2$.

 Choose one saddle point $z_0=1$ from $f'(z)=0$ and deform $i\mathbb{R}$ as the union of one  closed clockwise contour $\mathcal{C}$ encircling the interval $[0,1)$ and the vertical line $x=1$. Note that as $\nu_0 \rightarrow \infty$
 \begin{multline} \nu_0
 G_{0,r+2}^{1,0}  \Big({\atop 0, -\nu_0, -\nu_1,\ldots,-\nu_r}\Big|\nu_0 u x\Big) G_{0,r+2}^{r+1,0} \Big({\atop \nu_0,  \nu_1,\ldots,\nu_r,0}\Big|\nu_0 vy\Big)\\ \sim
 G_{0,r+1}^{1,0}  \Big({\atop 0,   -\nu_1,\ldots,-\nu_r}\Big|  u x\Big) G_{0,r+1}^{r,0} \Big({\atop  \nu_1,\ldots,\nu_r,0}\Big|  vy\Big),\end{multline}
proceeding as in the proof  of Theorem  \ref{noncriticalkernel}, we can show that  the dominant
    contribution    comes   from the range of    $u \in [0,1)$ and $v \in \mathcal{C}$. Finally, application of the residue theorem gives the proof.   \end{proof}

Similarly, for the large negative $\tau$, we observe a transition from the critical kernel to the Meijer G-kernel.
This is to be expected, as then the parameter $b$ in (\ref{AN}) enters the subcritical regime $b < 1$,
since effectively $b = (1 - \tau/N)^{-1}$.

\begin{prop} Let ${\mathcal K}^{{\rm h},r}(\xi,\eta;\tau)$ be the critical kernel \eqref{criticalk}. Then  we have

\be \lim_{ \tau\rightarrow -\infty}    (-1/\tau)  {\mathcal K}^{{\rm h},r}\big(-x/\tau, -y/\tau;\tau\big) = {K}^{{\rm h},r}(x,y).\ee
\end{prop}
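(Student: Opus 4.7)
The plan is to deform the $v$-contour in the integral representation \eqref{criticalk} across the pole at $v = u$ arising from $1/(u-v)$; the residue will produce exactly $K^{h,r}(x, y)$, while the surviving integral will be subleading. Setting $T = -\tau > 0$, the claim reduces to showing $\lim_{T\to\infty} T^{-1}\,\mathcal{K}^{h,r}(x/T, y/T; -T) = K^{h,r}(x,y)$. Under these substitutions the exponent in \eqref{criticalk} becomes $-(u-T)^2/2 + (v-T)^2/2$ (the mean-field constants $\pm T^2/2$ cancel between the two variables), localising at the critical point $u = v = T$.

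Deform the $v$-contour from $i\mathbb{R}$ to $T + i\mathbb{R}$, both oriented upward. Inside the strip $\{0 < \mathrm{Re}(v) < T\}$ the integrand is analytic in $v$ apart from the simple pole at $v = u$; this pole is swept across precisely when $u \in (0, T)$, and there the exponent vanishes while $(u/v)^{\nu_0} = 1$. The residue contribution is therefore
\begin{equation*}
\int_0^T du\, G^{1,0}_{0,r+2}(\cdots\,|\,ux/T)\, G^{r+1,0}_{0,r+2}(\cdots\,|\,uy/T),
\end{equation*}
which under the rescaling $u = Ts$ equals precisely $T \cdot K^{h,r}(x, y)$ by \eqref{r}. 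The horizontal arcs at $\mathrm{Im}(v) = \pm R$ vanish as $R \to \infty$ thanks to the Gaussian decay $|e^{v^2/2}| = e^{(\sigma^2 - R^2)/2}$ on vertical lines $\mathrm{Re}(v) = \sigma \in [0, T]$, which dominates the subexponential growth (at most $\exp(c|v|^{1/(r+2)})$) of $G^{r+1,0}_{0,r+2}(\cdots\,|\,v\eta)$.

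It remains to show that the surviving integral, with $v$ on $T + i\mathbb{R}$, is $o(T)$. Substituting $u = T + \hat u$ and $v = T + is$, the exponent collapses to $-(\hat u^2 + s^2)/2$, the ratio $(u/v)^{\nu_0}$ tends to $1$, and the G-function arguments $ux/T, vy/T$ tend to $x, y$ respectively, so the integrand converges pointwise on $\mathbb{R} \times \mathbb{R}$ to
\begin{equation*}
\frac{1}{2\pi}\, G^{1,0}_{0,r+2}(\cdots\,|\,x)\, G^{r+1,0}_{0,r+2}(\cdots\,|\,y)\, \frac{e^{-(\hat u^2 + s^2)/2}}{\hat u - is}.
\end{equation*}
The key observation is that the remaining scalar factor integrates to zero: writing $z = \hat u - is \in \mathbb{C}$, so that $d\hat u\,ds = d^2 z$ and $\hat u^2 + s^2 = |z|^2$, polar coordinates $z = re^{i\theta}$ give
\begin{equation*}
\int_{\mathbb{R}^2} \frac{e^{-(\hat u^2 + s^2)/2}}{\hat u - is}\,d\hat u\,ds = \int_0^\infty e^{-r^2/2}\,dr \int_0^{2\pi} e^{-i\theta}\,d\theta = 0.
\end{equation*}
Hence dominated convergence gives that the surviving integral is $o(1)$, and a fortiori $o(T)$; dividing by $T$ leaves only the residue contribution $K^{h,r}(x, y)$, as claimed.

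The main obstacle will be constructing a dominating function, uniform in $T$, both for the final dominated-convergence step and for the vanishing of the horizontal arcs. The Gaussian weights $e^{-\hat u^2/2}$ and $e^{-s^2/2}$, together with the two-dimensionally integrable singularity $1/|\hat u - is|$ at the origin, provide clean control once one verifies that the Meijer G-functions evaluated at the shifted arguments $(T+\hat u)x/T$ and $(T+is)y/T$ do not overwhelm these Gaussians. This follows from the standard large-argument asymptotics of $G^{1,0}_{0,r+2}$ and $G^{r+1,0}_{0,r+2}$, whose growth in the relevant sectors is at most of order $\exp(c\,\mathrm{Re}(z^{1/(r+2)}))$, a rate easily dominated by any true Gaussian; the same asymptotics applied along vertical lines justify the vanishing of horizontal arcs during the initial contour deformation.
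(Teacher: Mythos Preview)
Your approach is correct and matches the paper's. The paper rescales $u,v\mapsto(-\tau)u,(-\tau)v$ so that the Gaussian exponent becomes $-\tau^2(f(u)-f(v))$ with $f(z)=-z+z^2/2$, then deforms the $v$-contour from $i\mathbb{R}$ to the steepest-descent line $\{\mathrm{Re}\,v=1\}$ together with a small clockwise loop around $[0,1)$; the loop captures the residue at $v=u$ and yields $K^{{\rm h},r}$, while the line through the saddle is dispatched by steepest descent (the paper simply writes ``proceeding as in the proof of Proposition~\ref{largenu0}''). Your deformation $i\mathbb{R}\to T+i\mathbb{R}$ is the same move in unscaled variables, and your residue term is identical after $u=Ts$.

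The one genuinely different ingredient is your treatment of the surviving integral: rather than invoking Laplace/steepest descent, you shift $u=T+\hat u$, $v=T+is$, observe that the limiting integrand factorises as $G\!\cdot\! G$ times $e^{-(\hat u^2+s^2)/2}/(\hat u-is)$, and kill it by the angular integral $\int_0^{2\pi}e^{-i\theta}\,d\theta=0$. This is a clean elementary substitute for the steepest-descent bound, and gives $o(1)$ directly, which is all that is needed. The caveat you yourself flag---a uniform dominating function built from the Gaussian factors, the locally integrable $1/|\hat u-is|$, and the subexponential Meijer-$G$ asymptotics---is exactly the right bookkeeping to make the dominated-convergence step rigorous; in the paper this is absorbed into the unwritten steepest-descent estimate.
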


\begin{proof} Substituting $u, v$ by $ -\tau  u$ and $-\tau v$ respectively in \eqref{criticalk}, we get
 \begin{align}  (-1/\tau)  {\mathcal K}^{{\rm h},r}&\big(-x/\tau, -y/\tau;\tau\big)  =   \frac{1}{2\pi i}\int_{0}^{\infty}du \int_{ i\mathbb{R}} dv \ \Big (\frac{u}{v} \Big )^{\nu_0} \frac{e^{ -\tau^{2}(f(u)-f(v))} }{u-v}  \nonumber \\  \times
&G_{0,r+2}^{1,0}  \Big({\atop 0, -\nu_0, -\nu_1,\ldots,-\nu_r}\Big|   u x\Big) G_{0,r+2}^{r+1,0} \Big({\atop \nu_0,  \nu_1,\ldots,\nu_r,0}\Big| vy\Big)
 \end{align}
 where $f(z)=  -  z + z^2/2$. Proceeding as in the proof of Proposition  \ref{largenu0}, the sought result follows.
    \end{proof}

Lastly, as to  the critical kernel on the RHS of \eqref{deformedcriticalkunitary} with $m=0$,  the functions  $\widetilde{\Gamma}^{(1)}(x)$ and $\widetilde{\Gamma}^{(0)}(x)$ defined in \eqref{typeIIpearceylikefunction} and \eqref{typeIpearceylikefunction},  there exists  similar asymptotic behavior  for large parameters  as in the above three propositions, but we refrain
 from writing them down.

 \subsection{Conjectures and open problems}
 In the concluding section of \cite{Fo14} a number of questions, mostly relating to asymptotics, were posed in relation to the kernel (\ref{r}). As we will
 indicate, these all carry
 over to the critical kernel (\ref{criticalk}). It is also the case that the conjectured behaviours are all closely related to analogous expected asymptotic properties of
 the finite $N$ kernel (\ref{kernelCD}). Two classes of asymptotic problems stand out.

 The first is to establish the global scaling limit of the critical one-point function. For this we expect
 \begin{equation}\label{E1}
 \lim_{N \to \infty} N^{r}K_N(N^{r+1}x,N^{r+1}x) \Big |_{a_l = N} = {1 \over \pi} {\rm Im} \, G(x - i0),
 \end{equation}
 where $w(z) := z G(z)$, satisfies the algebraic equation
 \begin{equation}\label{AE}
 w^{r+3/2} - z w^{1/2} + z = 0.
 \end{equation}
 The latter is known to specify the Raney distribution with parameters $(3+2r,2)$, which according to free probability theory is the global density
 for the matrix (\ref{Y}) in the critical case (see e.g.~\cite[Remark 3.4]{FL14}).  In the case of the global limit (\ref{E1}) with $a_l = 0$  ($l=1,\dots,N$),
 a recent achievement \cite{LWZ14} has been the use of the double contour integral formula (\ref{bq}) to deduce that (\ref{E1}) with
 $w(z) := z G(z)$ satisfies the algebraic equation
  \begin{equation}\label{AE}
 w^{r+2} - z w + z = 0.
 \end{equation}
 The latter specifies the Raney distribution with parameters $(r+2,1)$, also known as the Fuss-Catalan distribution with parameter $r+1$
 \cite{pz}, and should give the  asymptotic  behavior  of  global density for small argument throughout the subcritical regime. In the supercritical regime, from a macroscopic viewpoint
 the number of random matrices in the product (\ref{Y}) is effectively $r$, since $A$ dominates $G_0$ and moreover $A$ is proportional to
 the identity. This implies that the corresponding asymptotic behaviour of  the global density near the origin  now corresponds to that of the Fuss-Catalan distribution with parameter $r$.

 To see the relevance of (\ref{E1}) to the asymptotics of the density in the critical hard edge scaled state, $\mathcal K^{{\rm h} \, , r}(x,x)$,
 we recall (cf.~\cite[Cor.~2.5]{FL14}) that it can be deduced from (\ref{AE}) that for small $x$ the global density has its leading asymptotics given by
 (\ref{f2}).
  In keeping with the discussion in the concluding section of \cite{Fo14}, this should be the leading large $x$ asymptotic form of
 $\mathcal K^{{\rm h} \, , r}(x,x)$. Combining this with the small $x$ asymptotic form (\ref{f1}) for the Fuss-Catalan density as applies to the
 subcritical and supercritical regimes (the latter with $r \mapsto r -1$ as already commented), we therefore expect
 \begin{equation}\label{R3}
 \mathcal K^{{\rm h} \, , r, \, (*)}(x,x) \mathop{\sim}\limits_{x \to \infty}
 \left \{
 \begin{array}{ll}
 \displaystyle  \frac{1}{\pi}\sin \frac{\pi}{r+2}\,   x^{-1 + \frac{1}{r+2}}, & (*) = {\rm subcritical} \\[.2cm]
  \displaystyle  \frac{1}{\pi}\sin \frac{2\pi}{2r+3}\,   x^{-1 + \frac{1}{r+3/2}}, & (*) = {\rm critical} \\[.2cm]
\displaystyle  \frac{1}{\pi}\sin \frac{\pi}{r+1}\,   x^{-1 + \frac{1}{r+1}}, & (*) = {\rm supercritical}.  \end{array}  \right.
\end{equation}

In general if the global density at the hard edge diverges as $x^{-p}$, then the expected number of eigenvalues in the interval
$(0,s)$ is proportional to $N s^{1-p}$. For this to be of order unity we must scale $s \mapsto N^{1/(1-p)} s$. Taking into consideration the
scaling $x \mapsto N^{r+1} x$ already present in (\ref{E1}), this suggests that the appropriate hard edge scalings are
\begin{equation}
x \mapsto  \left \{
 \begin{array}{ll} (1/N) x, & {\rm subcritical} \\[.2cm]
(1/\sqrt{N})x, & {\rm critical} \\[.2cm]
{\rm no \: change}, &  {\rm supercritical}, \end{array}  \right.
\end{equation}
in agreement with those used in the main body of the text.

 The second class of asymptotic of the type identified in the concluding section of \cite{Fo14}
  is to compute the leading asymptotic form of
 the off diagonal analogue of the LHS of (\ref{E1}), namely
 \begin{equation}\label{Khat}
 \hat{K}_N(x,y) \mathop{:=}\limits^\cdot N^{r+1} K_N(N^{r+1}x,N^{r+1}y), \qquad x \ne y,
 \end{equation}
 where the dot above $:=$ indicates that terms which oscillate and average to zero are to be ignored.
 To see the interest in this quantity, note from (\ref{7.eb2})
 that the truncated (or connected) two-point correlation $\rho_{(2),N}^T(x_1,x_2) := \rho_{(2),N}(x_1,x_2) - \rho_{(1),N}(x_1) \rho_{(1),N}(x_2)$ is given by
 $\rho_{(2),N}^T(x_1,x_2)  = - K_N(x_1,x_2) K_N(x_2,x_1)$, so knowledge of the asymptotics of $\hat{K}(x,y) $ tells us the asymptotics of
 \begin{equation}\label{Wide}
\hat{\rho}_{(2),N}^T(x,y) \mathop{:=}\limits^\cdot N^{2(r+1)} \rho_{(2),N}^T(N^{r+1}x,N^{r+1}y), \qquad x \ne y,
 \end{equation}
 With $G = \sum_{j=1}^N g(x_j)$ denoting a linear statistic in the bulk scaled system, in view of the formula
 (see e.g.~\cite[eqn.~(14.38)]{Fo10})
  \begin{multline}\label{E2}
  {\rm Var} \, G = N^{2(r+1)} \int_0^\infty d x_1 \int_0^\infty dx_2 \, g(x_1) g(x_2) {\rho}_{(2),N}^T(N^{r+1}x_1,N^{r+1}x_2) \\+ N^{r+1} \int_0^\infty g(x) \rho_{(1)}(N^{r+1}x) \, dx
  \end{multline}
  one sees that (\ref{Wide}) (sometimes referred to as a wide correlator; see
  e.g.~\cite{It97})
  essentially determines the large $N$ form of
  this fluctuation, which
  is expected to be $\mathcal{O}(1)$ (see e.g.~\cite[\S 14.3]{Fo10}).

  As a concrete example of this second type of asymptotics, consider the simplest case of (\ref{Y}), namely $r=0$ and $A = 0$. The squared singular values correspond to the eigenvalues of $G_0^* G_0$, where $G_0$ is a $(N + \nu_0) \times N$ standard complex Gaussian
  matrix. This class of random matrices is referred to as the complex Wishart ensemble (see e.g.~\cite[\S 3.2]{Fo10}). For this ensemble it is a known
  result that \cite{Be93}
  \begin{equation}\label{beb}
  N^2\rho_{(2),N}^T(Nx,Ny) \mathop{\sim}\limits^{\cdot} - {1 \over 2 \pi^2} {1 \over (x - y)^2}
  {(L/2) (x + y) - xy \over (x(L-x)y(L-y))^{1/2}}, \qquad x \ne y,
  \end{equation}
  with $L=4$, and where the dot above the asymptotic sign denotes a restriction to non-oscillatory terms.

  Suppose now that in the definition (\ref{Khat})
  of $\hat{K}_N(x,y)$ we introduce a scale factor $L$ and compute instead the asymptotic form of $(N^{r+1}/L) K_N(N^{r+1}x/L,N^{r+1}y/L)$. For the complex Wishart ensemble
  the RHS of (\ref{beb}) with $L$ a variable results. For general $r$, if the original leading asymptotic form of $\tilde{\rho}_{(2)}^T(x,y)$ was $R(x,y)$, this will now equal
  $(1/L^2) R(x/L,y/L)$. Following \cite{Be93} we expect that
 \begin{equation}\label{beb1}
 \lim_{L \to \infty} {1 \over L^2} R \Big ( {x \over L}, {y \over L} \Big ) \to R^{\rm h}(x,y),
 \end{equation}
 where $R^{\rm h}(x,y)$ is the leading non-oscillatory large $x$, large $y$ asymptotic form of the hard edge scaling of
 $\rho_{(2),N}^T(x,y)$.  In the context of the present setting this corresponds to seeking the large $x$, large $y$ form
 of $\mathcal K^{{\rm h}, r}(x,y)$. In the case of the complex Wishart ensemble, (\ref{beb1}) applied to (\ref{beb}) predicts that
\begin{equation}\label{beb2}
\rho_{(2)}^{{\rm h},T}(x,y)  \mathop{\sim}\limits^\cdot  - {1 \over 4 \pi^2}
{ (x/y)^{1/2} + (y/x)^{1/2} \over (x-y)^2},
\end{equation}
which is in fact a known exact result (see e.g.~\cite[eqn.~(7.75)]{Fo10}). The analogue of (\ref{beb2}) is known for the case $r=1$, $A =0$ of
(\ref{Y}) \cite[eqn.~(5.28)]{Fo14}, but the analogue of (\ref{beb}) is yet to be obtained. As discussed in \cite{Fo14}, knowledge of an asymptotic form
such as (\ref{beb2}) is of interest for the computation of the variance of a scaled linear statistic at the hard edge,
$G_\alpha = \sum_{j=1}^\infty g(x_j/\alpha)$ when $\alpha \to \infty$, which is  given by
\begin{eqnarray}\label{VL}
 \lefteqn{
\lim_{\alpha \to \infty} {\rm Var} \, G_\alpha   := \lim_{\alpha \to \infty} \Big ( \int_0^\infty d \lambda_1   \int_0^\infty d \lambda_2 \,}  \nonumber \\ &&
\times g(\lambda_1/\alpha) g(\lambda_2/\alpha)  \rho_{(2)}^{T, {\rm h}, \, r}(\lambda_1,\lambda_2)+ \int_0^\infty d \lambda \, g(\lambda/\alpha)    \rho_{(1)}^{{\rm h},\, r}(\lambda) \Big ) .
\end{eqnarray}

A number of challenges for future research present themselves from the above discussion.
We conclude this section with a list of a few more.

\begin{itemize}
\item Under the assumption of $a_1= \cdots= a_N=b N$ with $b>0$, verify the sine-kernel  in the bulk and Airy-kernel at the soft edge for \eqref{kernelCD} and \eqref{kernelCDunitary} (see recent monographs  \cite{agz09,De99,Fo10,tao12} for the sine and Airy kernels and  \cite{LWZ14} for recent  progress on  the random matrix products).
\item Under the assumption of $a_{m+1}= \cdots= a_N=b N$ with $b>0$, by tuning the  parameters $a_1, \ldots, a_m$ verify the BBP transition for \eqref{kernelCD} and \eqref{kernelCDunitary} (cf. \cite{BBP,Pe06}).
\item Verify the transitions from the critical kernels \eqref{criticalk} and  \eqref{deformedcriticalkunitary} to the sine-kernel and to the Airy-kernel (cf.~\cite[Exercise 7.2]{Fo10}  and \cite{Fo14}).
\end{itemize}

\begin{acknow}
The work of P.J.~Forrester was supported by the Australian Research Council  for the project DP140102613.  The work of D.-Z.~Liu  was  supported by the National Natural Science Foundation of China under grants  11301499 and  11171005.
 Special thanks go to  Dong Wang for inviting us to the  Department of Mathematics at NUS in  July  2014, and to Lun Zhang for bringing  the preprint of \cite{DV14}  to our attention during the drafting of this article. The anonymous referees' constructive comments and  suggestions  are most appreciated.

\end{acknow}

\bibliographystyle{amsplain}

\end{document}